\renewcommand*{\HyperDestNameFilter}[1]{\jobname-#1} 
\numberwithin{equation}{section}
\providecommand{\leftsquigarrow}{%
  \mathrel{\mathpalette\reflect@squig\relax}%
}
\newcommand{\reflect@squig}[2]{%
  \reflectbox{$\m@th#1\rightsquigarrow$}%
}
\newcommand{\sspace}{\vspace{0.25cm}}
\theoremstyle{plain}
\newtheorem{theor}{Theorem}[section]
\newtheorem{conj}[theor]{Conjecture}
\newtheorem{prop}[theor]{Proposition}
\newtheorem{lem}[theor]{Lemma}
\newtheorem{cor}[theor]{Corollary}
\theoremstyle{remark}
\newtheorem{rem}[theor]{Remark}
\newtheorem{Example}[theor]{Example}
\newtheorem{Question}[theor]{Question}
\theoremstyle{plain}
\newtheorem{defi}[theor]{Definition}
\newcommand{\CC}{{\mathbb C}}
\newcommand{\RR}{{\mathbb R}}
\renewcommand{\SS}{{\mathbf S}}
\newcommand{\QQ}{{\mathbb Q}}
\newcommand{\ZZ}{{\mathbb Z}}
\newcommand{\G}{{\mathbf G}}
\newcommand{\HH}{{\mathbf H}}
\newcommand{\PP}{{\mathbf P}}
\newcommand{\M}{\mathbb{M}}
\newcommand{\UU}{{\mathbf U}}
\newcommand{\NN}{{\mathbb N}}
\newcommand{\AAf}{\mathbb{A}_{\rm f}}
\newcommand{\Ga}{\Gamma}
\newcommand{\Gr}{{\textnormal{Gr}}}
\newcommand{\ti}[1]{\mbox{$\tilde{#1} $}}
\newcommand{\ol}{\overline}
\newcommand{\lo}{\longrightarrow}
\newcommand{\sm}{{\rm sm}}
\newcommand{\End}{{\rm End}\,}
\newcommand{\Res}{{\rm Res}}
\newcommand{\Sh}{{\rm Sh}}
\newcommand{\Gm}{{\mathbf{G}_{m}}}
\newcommand{\X}{{\mathcal X}}
\newcommand{\He}{{\mathcal H}}
\newcommand{\der}{{\rm der}}
\newcommand{\GL}{{\rm \bf GL}}
\newcommand{\MT}{{\mathbf{P}}}
\newcommand{\g}{{\mathfrak{g}}}
\newcommand{\proj}{{\mathbf P}}
\newcommand{\rk}{\textnormal{rk}\,}
\newcommand{\Aut}{\textnormal{Aut}}
\newcommand{\an}{\textnormal{an}}
\newcommand{\HHom}{\textnormal{Hom}}
\newcommand{\cd}{\textnormal{cd}}
\newcommand{\cC}{{\mathcal C}}
\newcommand{\cD}{{\mathcal D}}
\newcommand{\cX}{{\mathcal X}}
\newcommand{\AAA}{{\mathcal A}}
\newcommand{\lto}{\longrightarrow}
\newcommand{\Def}{\textnormal{Def}}
\newcommand{\GmC}{\mathbf{G}_{\textnormal{m}, \CC}}
\newcommand{\SL}{{\mathbf{SL}}}
\newcommand{\Lie}{{\rm Lie}\,}
\newcommand{\Imm}{{\rm Im}}
\newcommand{\Zar}{\textnormal{Zar}}
\newcommand{\Id}{\textnormal{Id}}
\newcommand{\VV}{\mathbb{V}}
\newcommand{\VVV}{\mathcal{V}}
\newcommand{\HL}{\textnormal{HL}}
\newcommand{\OO}{\mathcal{O}}
\newcommand{\im}{\textnormal{Im}}
\newcommand{\Ad}{\textnormal{Ad}\,}
\newcommand{\Hod}{\textnormal{Hod}}
\newcommand{\Rep}{\mathbf{Rep}}
\newcommand{\MH}{\mathcal{MH}}
\newcommand{\W}{\mathbf{W}}
\newcommand{\Hcd}{\textnormal{H-cd}}
\newcommand{\atyp}{\textnormal{atyp}}
\newcommand{\NL}{\textnormal{NL}}
\newcommand{\w}{\mathfrak{w}}
\newcommand{\p}{\mathfrak{p}}
\begin{document}

\title{Hodge loci and atypical intersections: conjectures}
\author{B. Klingler}
\begin{abstract}
We present a conjecture on the geometry of the Hodge
locus of a (graded polarizable, admissible) variation of mixed Hodge structure over a complex
smooth quasi-projective base, generalizing to this context the Zilber-Pink
Conjecture for mixed Shimura varieties (in particular the Andr\'e-Oort
conjecture).
\end{abstract}
\maketitle

\tableofcontents

\section{Introduction: Hodge locus, atypical locus and main
  conjecture} \label{intro}

\subsection{Hodge locus}
The general context of this paper is the study of the following geometric
problem. Let $k$ be an algebraically closed field and let $f: \X \lo S$ be a smooth morphism
of quasi-projective $k$-varieties. Can
we describe the locus of closed points $s \in S$ where the motive $[\X_s]$ of the
fiber $\X_s$ is ``simpler'' than the motive of the fiber at a very general
point? Here ``simpler'' means that the fiber $\X_s$ and its powers contain more algebraic
cycles than the very general fiber and its powers. If a Tannakian formalism of $k$-motives
were available, this would be equivalent to saying that the motivic
Galois group $\textnormal{GMot}(\X_s)$ is smaller than the motivic
Galois group of the very general fiber.

We restrict ourselves to $k=\CC$. From now on all algebraic varieties are over
$\CC$. Following a common abuse of notation we will still denote by $S$ the
complex analytic space $S^\an$ associated to an algebraic variety
$S$; the meaning will be clear from the context. By a point of $S$ we
always understand a closed point.

We consider the Hodge incarnation of our
problem. Let $\VV \rightarrow S$ be a 
variation of mixed Hodge structures (VMHS)
over a smooth quasi-projective variety $S$. In this introduction we
will consider $\QQ$VMHS (we will restrict ourselves to
$\ZZ$VMHS when monodromy arguments are involved, as it simplifies the
exposition). The
weight filtration on $\VV$ is denoted by $W_\bullet$ and the Hodge filtration
on $\VV \otimes_{\QQ} \OO_{S}$ by $F^\bullet$. In this
paper all such variations are assumed to be {\it graded-polarizable and
admissible}. A typical example of such a gadget is $\VV=
R^mf_* \QQ$ for $f:
\X \lo S$ smooth algebraic, locally trivial for the usual topology. Precise definitions of
Hodge theory are recalled in \Cref{VMHS}. The non-expert reader will think of the
case of a (polarizable) variation of pure Hodge structures
(corresponding geometrically to the case where $f$ is moreover proper).

Replacing algebraic cycles by Hodge classes and motivated by the Hodge conjecture, one wants to understand
the Hodge locus $\HL(S, \VV) \subset S$, namely the subset of $S$ of  
points $s$ for which exceptional Hodge tensors for $\VV_{s}$ do occur. 

The Tannakian formalism available for Hodge structures is
particularly useful for describing $\HL(S, \VV)$. We recall it here,
as it will be crucial for the statement of our main conjecture. For
every $s\in S$, the Mumford-Tate group $\MT_s(\VV)$ of the Hodge structure
$\VV_{s}$ is the Tannakian group of the Tannakian category $\langle
\VV_{s}^\otimes\rangle$ of mixed $\QQ$-Hodge structures
tensorially generated by $\VV_{s}$ and its dual 
$\VV_{s}^\vee$. Equivalently, the group $\MT_s(\VV)$ is the stabiliser
of the Hodge tensors for $\VV_{s}$, i.e. the Hodge classes in the
rational Hodge structures tensorially generated by $\VV_{s}$ and
its dual. This is a connected $\QQ$-algebraic group, which is
reductive if $\VV_{s}$ is pure, and an extension of the reductive
group $\MT_s(\Gr^{W}_\bullet \VV)$ by a unipotent group in general
(where $W$ denotes the weight filtration on $\VV$). A point $s \in
S$ is said to be Hodge generic for $\VV$ if $\MT_s(\VV)$ is
maximal. If $S$ is irreducible, two Hodge generic points of $S$ have
the same Mumford-Tate group, called the generic Mumford-Tate group
$\MT_{S}(\VV)$ of $(S, \VV)$. The Hodge locus $\HL(S, \VV)$ is the subset of
points of $S$ which are not Hodge generic.

A fundamental result of Cattani-Deligne-Kaplan \cite{CDK95} (in the pure
case, extended to the mixed case in \cite{BPS})
states that $\HL(S, \VV)$ is a countable union of closed irreducible algebraic
subvarieties of $S$. A {\em special subvariety} of $(S, \VV)$ is by
definition an irreducible subvariety of $S$ maximal among the
irreducible subvarieties with a fixed generic Mumford-Tate
groups. Special subvarieties of
dimension zero are called {\em special points} of $(S, \VV)$. A
special point $s$ whose Mumford-Tate group $\MT_s(\VV)$ is a torus is
called a point with complex multiplication ({\em CM-point}) for $(S, \VV)$.

In a nutshell, we would like to address the following vaguely stated:
\begin{Question} \label{main question}
Given a smooth quasi projective variety $S$, any
variation of mixed Hodge structure $\VV \lo S$ produces naturally a
countable collection of irreducible subvarieties of $S$: its special subvarieties. Can one
describe the distribution of the special subvarieties strictly
contained in $S$, in
particular of the CM-points?
\end{Question}

\subsection{Zariski-closure of the Hodge locus} \label{Zariski}
A first precise version of \Cref{main question} would be to describe the
Zariski-closure of $\HL(S, \VV)$, in particular to answer the
following

\begin{Question} \label{Zariski-closure}
{\em Are there any geometric constraints on the Zariski-closure of $\HL(S,
\VV)$? Can one describe the couples $(S, \VV)$ such that $\HL(S, \VV)$
is Zariski-dense in $S$?} 
\end{Question}

Particular cases of this problem have been
classically considered by complex algebraic geometers, essentially
when $\VV$ is pure of weight $1$ or $2$, and using infinitesimal
methods which lead to density results even for the Archimedean topology:

\begin{Example} ($\VV$ pure of weight $1$) \label{Example1}
When $\VV$ is pure of weight $1$ (hence we are essentially considering
families of Abelian varieties), \Cref{Zariski-closure} has been raised
in a particular case in \cite{ColPi} and \cite{Iz}. 
A typical result is the following. Let
$S\subset \AAA_g$ be a subvariety of codimension at most $g$ of the moduli space $\AAA_g$ of
principally polarized Abelian varieties of dimension $g$. Then the
set $S_k$ of points $s \in S$ such that the corresponding Abelian
variety $A_s$ admits an Abelian subvariety of dimension $k$ is dense
(for the Archimedean topology) in $S$ for any integer $k$ between
$1$ and $g-1$. Let $\VV$ be the VHS restriction to $S$ of the Hodge incarnation $R^1 f_*\QQ$ of the universal Abelian variety
$f:\mathfrak{A}_g \lo \AAA_g$ over $\AAA_g$. As $S_k \subset \HL(S, \VV)$ it follows that the set $\HL(S, \VV)$ is dense in $S$. 
More generally let $(\G,
X)$ be a pure Shimura datum, $\Sh_K(\G, X)$ an associated Shimura variety,
$\HH \subset \G$ a $\QQ$-reductive subgroup and $S \subset \Sh_K(\G,
X)$ an algebraic subvariety. Let $\VV \lo S$ be the restriction to
$S$ of the variation of pure Hodge structures on $\Sh_K(\G, X)$
associated to any faithful algebraic representation of $\G$. Denote by
$\HL(S,\VV, \HH) \subset \HL(S, \VV)$ the subset of points $s \in S$ whose
Mumford-Tate group $\MT_s(\VV)$ is $\G(\QQ)$-conjugated to $\HH$. In
\cite{Chai} Chai defines an invariant $c(\G, X, \HH) \in \NN$, whose
value is $g$ in the example above, which
has the property that $\HL(S, \VV, \HH)$ is dense in $S$ as soon as $S$ has
codimension at most $c(\G, X, \HH)$ in $\Sh_K(\G, X)$.
\end{Example}

\begin{Example} ($\VV$ pure of weight $2$: Noether-Lefschetz locus) \label{Example2}
Let $S \subset \proj H^0(\proj^3_\CC, \OO(d))$ be the open subvariety moduli space of
smooth surfaces of degree $d$ in $\proj^3_\CC$. For $d>3$, the classical Noether theorem
states that the very general surface $Y \in S$ has Picard group $\ZZ$:
every curve on $Y$ is a complete intersection of $Y$ with another
surface in $\proj^3_\CC$. The countable union $\NL(S)$ of algebraic
subvarieties of $S$ corresponding to surfaces with bigger Picard group
is called the Noether-Lefchetz locus of $S$.
Let $\VV \lo S$ be the VHS $R^2f_*\QQ$, where $f: \mathcal{Y} \lo
S$ denotes the universal family. Clearly $\NL(S) \subset \HL(S,
\VV)$. In \cite{CHM} Ciliberto, Harris and Miranda proved that $\NL(S)$
is Zariski-dense in $S$. Green (see \cite[Prop.5.20]{Voisin}) proved the stronger result
that $\NL(S)$ is dense in $S$ for the Archimedean topology. In
particular $\HL(S, \VV)$ is dense in $S$.
\end{Example}

\Cref{Example1} and \Cref{Example2} indicate that special
subvarieties for $(S, \VV)$ are quite common in general. Even if they have a Hodge theoretic
significance, they are not special enough to force any global shape for the Zariski-closure of $\HL(S,
\VV)$. Hence we cannot expect a naive answer to the naive
\Cref{Zariski-closure}. 

\subsection{Atypical locus}
In this note we define a natural subset $S_\atyp(\VV) \subset\HL(S,
\VV)$: the {\em atypical locus} of $(S, \VV)$. While the
Zariski-closure of $\HL(S, \VV)$ can be wild, we conjecture (main
\Cref{main conjecture1}) that the structure of
$S_\atyp(\VV)$ is simple. This generalizes the Zilber-Pink Conjecture
for Shimura varieties to any (graded-polarized, admissible) $\QQ$VMHS over
any smooth quasi-projective base.

\subsubsection{Hodge codimension}
The crucial notion for defining the atypical locus $S_\atyp(\VV)
\subset S$ is the notion of Hodge codimension:

\begin{defi} (Hodge codimension) \label{Hcd}
Let $S$ be an irreducible quasi-projective variety and $\VV \lo S^\sm$ a variation
of mixed Hodge structures on the smooth locus $S^\sm$ of $S$. Let
$\PP_S$ be the generic Mumford-Tate group of $(S^\sm, \VV)$ and $\p_S$ its Lie
algebra (endowed with its canonical mixed $\QQ$-Hodge structure, of
weight $\leq 0$).

We define the Hodge codimension of $S$ with respect to $\VV$ as 
\begin{equation*} 
\Hcd(S, \VV) := \dim_\CC(  \Gr^{-1}_F (\p_S \otimes_\QQ \CC) ) - \rk \Imm \ol{\nabla}\;\;,
\end{equation*}
where $\ol{\nabla}: TS^\sm \lo \Gr^{-1}_{F} (W_0 \End \VV \otimes_\QQ \OO_S)$ is the
Kodaira-Spencer map (see \Cref{KS}) of $(S, \VV)$. 
\end{defi}

\begin{rem} \label{rem1}
It follows from \Cref{Hcd} that if $Y \subset Y' \subset S$ if a
pair of irreducible subvarieties and if $\PP_Y= \PP_Y'$ then $\Hcd(Y',
\VV_{|{Y'}^\sm}) \leq \Hcd(Y, \VV_{|{Y}^\sm})$.
\end{rem}

\subsubsection{Atypical subvarieties}

\begin{defi} (Atypical subvariety) \label{atypical}
Let $S$ be an irreducible smooth quasi-projective variety and $\VV \lo S$ a variation
of mixed Hodge structures on $S$.

An irreducible subvariety $Y \subset S$ is said to be atypical for
$(S, \VV)$ if 
\begin{equation}  \label{atypicality condition}
\Hcd(Y, \VV_{|Y^\sm}) < \Hcd(S, \VV) \;\;.
\end{equation}

We denote by $S_{\atyp}(\VV) \subset S$ the subset of $S$ given by the
union of
all atypical subvarieties for $(S, \VV)$.
\end{defi}

One easily checks that $S_\atyp(\VV)$ is contained in $\HL(S, \VV)$
(see \Cref{rem0}).

\subsection{Optimal subvarieties}
Let us introduce a notion cunningly different from atypicality:
\begin{defi} (Optimal subvariety) \label{optimal}
Let $S$ be an irreducible smooth quasi-projective variety and $\VV \lo S$ a variation
of mixed Hodge structures on $S$.

An irreducible subvariety $Y \subset S$ is said to be optimal for
$(S, \VV)$ if for any irreducible subvariety $Y \subsetneqq Y' \subset S$ containing
$Y$ strictly, the following inequality holds:
\begin{equation}  \label{atypicality condition}
\Hcd(Y, \VV_{|Y^\sm}) < \Hcd(Y', \VV_{|Y'^\sm}) \;\;.
\end{equation}
\end{defi}

Notice that if $Y \subset S$ is optimal for $(S, \VV)$
then $Y$ is atypical for $(S, \VV)$ and that, conversely, any
irreducible $Y \subset S$ which is atypical for $(S, \VV)$ {\em and maximal for this
property} is optimal for $(S, \VV)$.

\subsection{Statements of the main conjecture}
We have seen in \Cref{Zariski} that the Zariski-closure of the Hodge
locus $\HL(S, \VV)$ can be complicated. The main object of this text
is to present the following conjectures (shown in \Cref{On main} to be
equivalent), which predict that the subset $S_{\atyp}(\VV)$ of $\HL(S,
\VV)$ on the contrary has a simple geometry (or
equivalently that optimal subvarieties are rare):

\begin{conj} (Main conjecture, form $1$) \label{main conjecture1}
For any irreducible smooth quasi-projective variety $S$ endowed with a variation
of mixed Hodge structures $\VV \lo S$, the subset $S_{\atyp}(\VV)$ is a
finite union of special subvarieties of $S$.
\end{conj}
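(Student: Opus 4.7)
The plan is to import the Pila-Zannier strategy from the Shimura-variety setting, where it has established substantial portions of the Andr\'e-Oort and Zilber-Pink conjectures, into the broader framework of admissible graded-polarizable VMHS. Three ingredients are needed: (i) a functional-transcendence (Ax-Schanuel) theorem for the period map $\Phi \colon S^{\an} \lo \Ga \bs D$ attached to $(S, \VV)$; (ii) definability of $\Phi$ in an o-minimal structure, enabling Pila-Wilkie point-counting on its image; and (iii) a complexity lower bound on the special subvarieties appearing in atypical intersections.

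First I would set up $\Phi$ and its definability (extending the results of Bakker-Brunebarbe-Klingler-Tsimerman to the mixed admissible case) and then invoke the Ax-Schanuel theorem for mixed period maps (proved in various generalities by Bakker-Tsimerman, Mok-Pila-Tsimerman, Gao, and Chiu). Combined with the very definition of $\Hcd$ in terms of $\Gr^{-1}_F \p_S$ and the Kodaira-Spencer map $\ol{\nabla}$, Ax-Schanuel should yield the key structural reduction: every optimal atypical subvariety $Y \subset S$ is contained in a proper \emph{weakly special} subvariety $Z \subsetneqq S$ with respect to which $Y$ remains atypical. An induction on dimension then reduces \Cref{main conjecture1} to showing that, for each fixed conjugacy class of generic Mumford-Tate subgroup $\HH \subset \PP_S$, only finitely many optimal atypical subvarieties of $(S, \VV)$ have generic Mumford-Tate group $\HH$. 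This is where Pila-Wilkie counting enters, applied to the preimage of $\HL(S, \VV)$ in a definable fundamental set for $\Phi$, and must be matched against a polynomial lower bound on the complexity of any such $Y$.

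The main obstacle is precisely this lower bound. In the Shimura setting it is arithmetic (Tsimerman's theorem for $\AAA_g$ via the averaged Colmez formula, and its analogues for more general Shimura varieties), but a generic VMHS carries no canonical Galois action and no class-field input, so the classical \textit{large Galois orbits} hypothesis has no direct analogue. Any proof will need to extract the lower bound Hodge-theoretically, perhaps from positivity of the period bundles (Brunebarbe), or from the rigidity that the identity $\MT_Y(\VV) \cap \PP_S = \HH$ forces on the Griffiths-transverse behaviour of $\Phi$ along $Y$. A secondary difficulty, already present at the Ax-Schanuel step, is the non-reductivity of mixed Mumford-Tate groups: I would argue by induction on the length of the weight filtration on $\VV$, reducing the mixed statement to the pure case plus a controlled unipotent (biextension-type) correction governed by $W_{-1}\p_S$ and its own Hodge structure.
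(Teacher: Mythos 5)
The statement you are trying to prove is Conjecture~1.10 (\Cref{main conjecture1}), which the paper neither proves nor claims to prove: it is the \emph{main conjecture} of the paper, presented as an open generalization of Zilber--Pink/Andr\'e--Oort to arbitrary admissible graded-polarizable $\QQ$VMHS. The only results the paper establishes about it are the equivalence of its four formulations (\Cref{equivalence}) and its logical relations to other conjectures (classical Zilber--Pink, Andr\'e--Oort for $\ZZ$VHS, Ax--Schanuel). So there is no ``paper's own proof'' to compare your attempt against, and any text that claims to prove \Cref{main conjecture1} outright should be viewed with suspicion.

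That said, your outline is internally honest and essentially coincides with the strategy the paper itself describes as the plausible line of attack in \Cref{functional transcendence}: o-minimal definability of the period map, Ax--Schanuel for the associated bi-algebraic structure (\Cref{Ax-Schanuel}, \Cref{Ax-Lindemann}), reduction via weakly special subvarieties, and Pila--Wilkie counting. You also correctly identify the genuine gap that this strategy does not close: outside the Shimura world there is no arithmetic/adelic structure on $\Hod^0(S,\VV)$ and hence no analogue of the ``large Galois orbits'' lower bound that powers the Pila--Zannier method; the paper makes the same point in its discussion of \Cref{conj1} versus \Cref{conj3} (the geometric part), and in \Cref{CY} it emphasizes that even finiteness of the relevant special loci is unclear for Calabi--Yau families. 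In short: what you have written is a reasonable research plan consistent with the author's own, not a proof, and you should be careful to present it as such; the missing ingredient you flag (a Hodge-theoretic substitute for Galois orbit lower bounds) is precisely what remains open.
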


\begin{conj} (Main conjecture, form $2$) \label{main conjecture2}
For any irreducible smooth quasi-projective variety $S$ endowed with a variation
of mixed Hodge structures $\VV \lo S$, the subset $S_{\atyp}(\VV)$ is a
strict algebraic subset of $S$.
\end{conj}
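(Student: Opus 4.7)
My plan is first to prove the announced equivalence of Forms 1 and 2, thereby reducing the task to algebraicity of $S_\atyp(\VV)$, and then to outline the machinery that would be needed to attack this algebraicity statement.

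The equivalence is essentially formal given \Cref{rem1}. The implication Form 1 $\Rightarrow$ Form 2 is immediate: special subvarieties are closed algebraic subvarieties of $S$ (Cattani--Deligne--Kaplan in the pure case, its mixed extension in \cite{BPS}), and $S$ itself fails to be atypical because \Cref{atypical} uses a strict inequality; since $S$ is irreducible it cannot be covered by finitely many proper closed subvarieties, so $S_\atyp(\VV) \subsetneq S$. For Form 2 $\Rightarrow$ Form 1, assume $S_\atyp(\VV) \subsetneq S$ is algebraic and let $Y$ be any irreducible component. Let $Y' \subset S$ denote the special subvariety containing $Y$ with $\PP_{Y'} = \PP_Y$. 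By \Cref{rem1},
\begin{equation*}
\Hcd(Y', \VV_{|{Y'}^\sm}) \;\leq\; \Hcd(Y, \VV_{|Y^\sm}) \;<\; \Hcd(S, \VV);
\end{equation*}
in particular $Y' \neq S$, so $Y' \subsetneq S$ is itself atypical and hence contained in $S_\atyp(\VV)$. Maximality of $Y$ as an irreducible component, together with $Y \subset Y'$, then forces $Y = Y'$, so $Y$ is a special subvariety. Thus every irreducible component of $S_\atyp(\VV)$ is special, and there are only finitely many.

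Having reduced the problem to algebraicity of $S_\atyp(\VV)$, my strategy would follow the Pila--Zannier template adapted to variations of mixed Hodge structures. First, upgrade the mixed period map to a definable map in the o-minimal structure $\RR_{\an,\exp}$, extending the Bakker--Klingler--Tsimerman definability theorem from the pure to the graded-polarized admissible mixed setting. Second, prove an Ax--Schanuel type transcendence theorem for such VMHS, stating that any atypical intersection with a weak Mumford-Tate subdomain in the mixed period domain must come from a special subvariety of $S$. Third, combine a Pila--Wilkie count on the resulting definable sets with Galois-theoretic lower bounds on orbits of CM points to control the atypical components and conclude algebraicity.

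The main obstacle will be the mixed Ax--Schanuel step. Bakker--Tsimerman succeeded in the pure case via o-minimal GAGA and Griffiths transversality, but in the mixed setting the Mumford-Tate group acquires a unipotent radical parameterizing the extension data between pure weight pieces, and the mixed period domain is no longer homogeneous under a reductive group; both the complex-analytic and the o-minimal ingredients therefore require substantial reworking, and Griffiths transversality must be supplemented by control on the Kodaira--Spencer-type map $\ol{\nabla}$ appearing in \Cref{Hcd}. A secondary obstruction is the lack of uniform Galois orbit lower bounds for general CM points of mixed type, which seem indispensable for converting the Pila--Wilkie counting step into the sought finiteness conclusion.
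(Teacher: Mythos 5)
This is an open conjecture, so there is nothing in the paper to ``prove'' for Form 2 itself; the paper only establishes in \Cref{equivalence} that Forms 1--4 are equivalent, and you are right to frame your contribution as (a) the equivalence with Form 1 plus (b) a speculative Pila--Zannier strategy. Part (b) is consistent with the direction the paper itself indicates in \Cref{functional transcendence}. In part (a) your direction Form 1 $\Rightarrow$ Form 2 is correct, but your direction Form 2 $\Rightarrow$ Form 1 has a genuine gap: after choosing $Y$ to be an irreducible component of the (assumed algebraic) set $S_\atyp(\VV)$, you immediately write $\Hcd(Y, \VV_{|Y^\sm}) < \Hcd(S, \VV)$, that is, you assume $Y$ is itself atypical. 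A priori $S_\atyp(\VV)$ is only a countable union of maximal atypical special subvarieties $X_i$ (\Cref{rem1.1}, \Cref{rem1.2}), and an irreducible component of such a union, even when it is known to be algebraic, is not obviously one of the $X_i$. You can repair this either by a Baire-category / uncountability argument (the closed irreducible $Y$ is covered by the countably many closed $X_i$, hence $Y \subset X_{i_0}$ for some $i_0$ and then $Y = X_{i_0}$ by maximality of the component), or by doing what the paper does: route through Form 3 and apply it to the \emph{smaller} pair $(Y, \VV_{|Y})$, producing some $X_{i_0} \subset Y$ that is not atypical for $(Y,\VV_{|Y})$, which gives the chain $\Hcd(Y, \VV_{|Y^\sm}) \leq \Hcd(X_{i_0}, \VV_{|X_{i_0}^\sm}) < \Hcd(S, \VV)$. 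The paper's cyclic route Form 1 $\Rightarrow$ 2 $\Rightarrow$ 3 $\Rightarrow$ 1 is chosen precisely to make this step clean; your direct Form 2 $\Rightarrow$ 1 is a legitimate alternative once this gap is filled, after which the rest of your argument (passing to the special closure $Y'$ with $\PP_{Y'}=\PP_Y$ and invoking \Cref{rem1}) goes through.
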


\begin{conj} (Main conjecture, form $3$) \label{main conjecture3}
For any irreducible smooth quasi-projective variety $S$ endowed with a variation
of mixed Hodge structures $\VV \lo S$, the subset $S_{\atyp}(\VV)$ is
not Zariski-dense in $S$.
\end{conj}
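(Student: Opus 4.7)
The plan is to attack \Cref{main conjecture3} via the Pila-Zannier strategy adapted to variations of mixed Hodge structure, built on an Ax-Schanuel type statement for the period map. First I would pass to the period map $\Phi: S \to \Gamma \bs D$, where $D$ is the classifying space associated with the generic Mumford-Tate group $\PP_S$, and translate the atypicality inequality $\Hcd(Y, \VV_{|Y^\sm}) < \Hcd(S, \VV)$ into a statement about atypical intersections. An irreducible subvariety $Y \subset S$ should be atypical exactly when, after lifting to the universal cover, the image $\wt{\Phi}(\wt{Y}) \subset D$ is contained in the Mumford-Tate subdomain $D_{\PP_Y} \subset D$ and the component of $\wt{\Phi}(\wt{S}) \cap D_{\PP_Y}$ through $\wt{\Phi}(\wt{Y})$ has larger dimension than a generic intersection of $\wt{\Phi}(\wt{S})$ with a translate of $D_{\PP_Y}$. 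The definition of Hodge codimension was engineered precisely to measure this expected codimension in the horizontal tangent space, so atypicality should match the Zilber-Pink notion of atypical intersection; verifying this dictionary (including a careful treatment of the unipotent radical via the graded pieces) is the first technical step.

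Assuming such a reformulation, the next move is to invoke an Ax-Schanuel theorem for admissible graded-polarizable mixed variations, generalizing the statement known for pure variations by Bakker-Klingler-Tsimerman. Such a theorem would force any atypical component of $\wt{\Phi}(\wt{S}) \cap D_{\HH}$, for $\HH \subset \PP_S$ a $\QQ$-subgroup, to descend to a weakly special subvariety of $S$; equivalently, every optimal subvariety $Y \subset S$ would sit inside a proper weakly special subvariety associated to a $\QQ$-subgroup of $\PP_S$, reducing the potential optimal subvarieties to a classification indexed by the countable set of $\QQ$-subgroups of $\PP_S$ up to conjugacy.

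Combining this with the definability of $\Phi$ in the o-minimal structure $\RR_{\an, \exp}$ (established in the pure case, conjectural in the mixed setting), one then runs a Pila-Wilkie style counting argument. Suppose for contradiction that $S_\atyp(\VV)$ were Zariski-dense in $S$. The set of $\QQ$-subgroups $\HH \subset \PP_S$ contributing optimal subvarieties would then have to be exceptionally rich, and counting their images in a fundamental domain via Pila-Wilkie should produce positive-dimensional definable algebraic families of such $\HH$. These families would contradict either the Ax-Schanuel input or the discreteness of $\QQ$-subgroups of $\PP_S$ up to conjugacy, yielding \Cref{main conjecture3}. An induction on $\dim S$, reducing to each strict weakly special subvariety supplied by Ax-Schanuel, is what glues the local counting to the global density statement.

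The principal obstacle is twofold. First, an Ax-Schanuel theorem in full mixed admissible generality is not currently available: controlling the unipotent radical of $\PP_S$, the associated normal functions, and the behavior of the weight filtration at the boundary requires extending the Mok-Pila-Tsimerman and Bakker-Klingler-Tsimerman machinery well beyond its present reach. Second, even granted such an Ax-Schanuel, closing the Pila-Zannier loop when the atypical locus consists of a dense set of CM-points (the Andr\'e-Oort regime) demands Galois-theoretic lower bounds on the orbits of special points, and in the mixed admissible setting the analogue of the Tsimerman-Ullmo-Yafaev bounds is largely open. I expect the heaviest new input to be required at precisely this arithmetic step.
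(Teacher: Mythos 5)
There is a fundamental issue to flag before any comparison of methods: the statement you were given is \Cref{main conjecture3}, which the paper states as a \emph{conjecture} and does not prove. The only things the paper establishes about it are the equivalence of forms $1$--$4$ (\Cref{equivalence}) and, in \Cref{functional transcendence}, a sketch of the very strategy you describe, where the key functional transcendence input (\Cref{Ax-Schanuel}) is itself stated as a conjecture and is described only as ``one main tool for attacking'' the main conjecture. So there is no proof in the paper to measure your attempt against, and your attempt should not be presented as a proof.

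On its own terms your text is a research program rather than an argument: every load-bearing step rests on an unproven input. Concretely, (i) the Ax-Schanuel statement for admissible graded-polarizable mixed variations that you invoke to push optimal subvarieties into weakly special ones is open (and is exactly \Cref{Ax-Schanuel} of the paper); (ii) definability of the period map in $\RR_{\an,\exp}$ is asserted only conditionally in the mixed setting; and (iii) the Pila--Wilkie/Pila--Zannier closing step requires arithmetic lower bounds on Galois orbits of special points, which outside the Shimura-type case are not even formulated, since a general $(S,\VV)$ carries no evident arithmetic structure on its special points. You acknowledge (i) and (iii) yourself, which is to your credit, but it means the chain of implications never closes: even granting mixed Ax-Schanuel, one only gets that optimal subvarieties lie in countably many families of weakly special subvarieties, not the non-density of $S_{\atyp}(\VV)$. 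Your translation of the atypicality inequality of \Cref{Hcd} and \Cref{atypical} into an excess-intersection statement for $\Phi_S(S)$ against sub-Hodge varieties is correct and matches \Cref{Hcd2} and \Cref{atypical2}, so the dictionary part of your plan is sound; the gap is that everything after it is conjectural, so the statement remains exactly what the paper says it is: a conjecture.
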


\begin{conj} (Main conjecture, form $4$) \label{main conjecture4}
Any irreducible smooth quasi-projective variety $S$ endowed with a variation
of mixed Hodge structures $\VV \lo S$ contains only finitely
many irreducible subvarieties optimal for $(S, \VV)$.
\end{conj}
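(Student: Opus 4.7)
The plan is to first reduce Form~4 to Form~1 (and hence Form~2) via the equivalence promised in the paper, thereby concentrating the essential content in the algebraicity of the atypical locus; the attack on Form~2 itself then proceeds through an Ax-Schanuel / Pila-Zannier strategy in the o-minimal category.

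\medskip
\noindent\textbf{Step 1: Equivalence Form~4 $\Leftrightarrow$ Form~1.} For Form~4 $\Rightarrow$ Form~1, every atypical subvariety embeds into a maximal atypical one by Noetherianity of $S$, and every maximal atypical subvariety is optimal (as observed right after \Cref{optimal}). Hence $S_\atyp(\VV)$ is covered by optimal subvarieties, finitely many by hypothesis. Each optimal $Y$ is moreover special: if $Y \subsetneq Y'$ with $\PP_{Y'}=\PP_Y$, then \Cref{rem1} yields $\Hcd(Y',\VV_{|{Y'}^\sm}) \leq \Hcd(Y,\VV_{|Y^\sm})$, contradicting the defining strict inequality of optimality. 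For Form~1 $\Rightarrow$ Form~4 I would induct on $\dim S$: writing $S_\atyp(\VV)=Z_1\cup\cdots\cup Z_n$ with each $Z_i$ a proper special (and atypical) subvariety, every optimal $Y\subseteq S$ is atypical, hence contained in some $Z_i$; either $Y=Z_i$ (giving finitely many candidates), or $Y \subsetneq Z_i$, in which case the optimality inequality restricts to every intermediate $Y\subsetneq Y'\subseteq Z_i$, making $Y$ optimal for $(Z_i,\VV_{|Z_i^\sm})$. Since $\dim Z_i<\dim S$, induction supplies finiteness.

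\medskip
\noindent\textbf{Step 2: Attacking Form~2.} With the equivalence in hand, the substantive task becomes showing $S_\atyp(\VV)$ is a strict closed algebraic subset of $S$. The CDK theorem (extended to the mixed case by Brosnan-Pearlstein-Schnell) already guarantees that $\HL(S,\VV)\supseteq S_\atyp(\VV)$ is a countable union of closed algebraic subvarieties, so the task is to show that only finitely many components of atypical type occur. The natural tool is an Ax-Schanuel theorem for period maps of VMHS, together with the $\RR_{\an,\exp}$-definability of the period map (following Bakker-Klingler-Tsimerman and its mixed extensions): Ax-Schanuel restricts atypical intersections to algebraic loci, and a Pila-Zannier counting argument in the o-minimal category would then promote this to unconditional finiteness, provided one has arithmetic lower bounds on Galois orbits or heights of the defining data of the special subvarieties involved.

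\medskip
\noindent\textbf{Main obstacle.} The hardest ingredient is precisely this arithmetic half of Pila-Zannier: sufficient lower bounds generalizing the class-number bounds used in the resolution of Andr\'e-Oort, in a context where the Hodge-theoretic moduli are not necessarily Shimura varieties and where the mixed case introduces unipotent phenomena with no arithmetic interpretation yet understood. In the absence of such bounds, only the formal equivalence of Step~1 is currently accessible unconditionally, which is why \Cref{main conjecture4} is framed as a conjecture rather than a theorem.
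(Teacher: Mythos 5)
The statement you are asked about is labelled a \emph{conjecture} in the paper, and indeed the paper never proves it: the only thing the paper establishes about it is the equivalence of Forms~1--4 in \Cref{equivalence}. Your Step~1 reproduces that equivalence essentially verbatim. For Form~4 $\Rightarrow$ Form~1 the paper simply remarks that maximal atypical subvarieties are optimal; you spell out a bit more, in particular observing (via \Cref{rem1}) that any optimal subvariety is special because no strictly larger subvariety can share its generic Mumford-Tate group --- a point the paper leaves implicit via \Cref{rem1.1}. For Form~1 $\Rightarrow$ Form~4 your induction on $\dim S$ is exactly the paper's argument: an optimal $Y$ sits in some maximal atypical $Z_{\phi(i)}$ and is either equal to it or optimal for $(Z_{\phi(i)},\VV_{|Z_{\phi(i)}^\sm})$. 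So Step~1 is correct and the same route as the paper.

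Your Step~2 and the ``main obstacle'' paragraph are not a proof, and you say so; they accurately anticipate the strategy the paper itself advertises in \Cref{functional transcendence} (Ax-Schanuel for $\ZZ$VMHS as \Cref{Ax-Schanuel}, definability, Pila-Zannier), and you correctly locate the missing arithmetic ingredient (lower bounds on Galois orbits outside the Shimura setting). Since the ``statement'' here is a conjecture and the paper offers no proof beyond the equivalence of forms, your proposal is in line with the paper: the provable content (Step~1) matches, and the conjectural content is honestly flagged as open.
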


\subsection{Organization of the paper}
This note is organized as follows. \Cref{VMHS} provides a recollection on mixed Hodge
theory for the non-expert. \Cref{Mixed Hodge varieties} defines the notion of mixed Hodge
varieties (a generalization in the complex analytic category of mixed
Shimura varieties) and the corresponding period maps. Although most of
the material in this section reorganizes classical
results, our treatment is resolutely ``group-oriented'': Deligne's formalism of Shimura
data (and its generalisation to Hodge data) seems to offer an unrivaled 
functorial setting for Hodge theory. \Cref{geometric} explains
\Cref{main conjecture1} in terms of the geometry of period maps and
atypical intersection in the sense of \cite{Zannier}. In
\Cref{On main} we prove the equivalence of \Cref{main conjecture1}, \Cref{main conjecture2},
\Cref{main conjecture3} and \Cref{main conjecture4}, and explain the relation between these
conjectures and more classical statements like the Zilber-Pink
Conjecture for Shimura varieties (and its particular case the Andr\'e-Oort Conjecture). \Cref{CY}
details the simplest example of \Cref{main conjecture1} outside the
world of Shimura varieties: the case of Calabi-Yau $3$-folds. \Cref{functional
  transcendence} describes the relation between \Cref{main
  conjecture1} and a functional transcendence statement of Ax-Schanuel type
for period maps (\Cref{Ax-Schanuel}). 

%The note finishes with the
%statement of two results
%\Cref{main} and \Cref{ProjectiveAxLin} whose proofs will appear in
%another paper.

\subsection{Acknowledgments} I would like to thank Emmanuel Ullmo for
his comments and his suggestion to add the interpretation of our main
conjecture in terms of optimal subvarieties (see \Cref{main
  conjecture4}). I thank the referees for their careful reading, in
particular for forcing me to clarify the (subtle) relation between my definition of mixed Hodge
datum and Pink's definition of mixed Shimura datum (see \Cref{ShimType}).
%%%%%%%%%%%%%%%%%%%%%%%%%%%%%%%%%%%%%%%%%%%%%%%%%%%%%%%%%%%%%

\section{Mixed Hodge theory}\label{VMHS}

\subsection{Deligne torus}
The Deligne torus is the restriction of scalars $\SS:=\Res_{\CC/\RR} \Gm$. So
$\SS_\CC$ is canonically isomorphic to $\GmC \times \GmC$ but
the action of complex conjugation on $\SS_\CC$ is given by the usual
one twisted by the exchange of the two factors. In particular
$\SS(\RR)= \CC^* \subset \SS(\CC) = \CC^* \times \CC^*$ consists of
the points of the form $(z, \ol{z})$.

Let $w: \mathbf{G}_{\textnormal{m}, \RR} \lto \SS$ be the
cocharacter whose values on real points is given by $\RR^* \subset
\CC^*$. We define the cocharacter $\mu: \GmC \lo \SS_\CC$ to be
the unique cocharacter such that $\overline{z} \circ \mu$ is trivial
and $z \circ \mu = \Id \in \End(\GmC)$, where $z$, $\ol{z}$ are
the two characters of $\SS$ generating its character group such that
the induced maps on points $\CC^* = \SS(\RR) \subset \SS(\CC) \lo
\Gm(\CC)= \CC^*$ are the identity, resp. complex conjugation. On
$\CC$-points, identifying $\SS(\CC)= \CC^* \times \CC^*$, we have
$\mu: \CC^* \lo \CC^* \times \CC^*$ given by $w \mapsto (w, 1)$.

\subsection{Pure Hodge structure}
We denote by $R$ one of the rings $\ZZ$, $\QQ$, $\RR$. 
Given $V$ an $R$-module we write $V_\RR:= V \otimes_R \RR$ and
$V_\CC:= V \otimes_R \CC$.

A pure $R$-Hodge structure (resp. of weight $n \in \ZZ$) is a Noetherian $R$-module $V$ together
with a morphism of algebraic groups $\varphi: \SS \lo \GL(V_\RR)$ (resp. such that $\varphi \circ w$ is given by $\CC^* = \SS(\RR)
\ni z \mapsto z^{-n}\cdot \Id_V$). Notice that if $R$ is a field, any
pure $R$-Hodge structure is a direct sum of pure $R$-Hodge
structure of fixed weight.

Equivalently, a pure $R$-Hodge structure of weight $n \in \ZZ$ is a
Noetherian $R$-module $V$ together with a bigraduation $V_\CC=  \bigoplus_{p+q=n} V^{p,q}$ satisfying $\ol{V^{p,q}} =
  V^{q,p}$, or a decreasing filtration called the Hodge filtration
  $F^p$ on $V_\CC$ such that $F^p \oplus \overline{F^{n-p+1}}
  \stackrel{\simeq}{\lo} V_\CC$. The equivalence between $\varphi$, the
  Hodge filtration and the bigraduation is as follows: the subspace
  $V^{p,q}$ of $V_\CC$ is the eigenspace of $\SS(\CC)= \CC^* \times
  \CC^*$ associated to the character $(z, w) \mapsto z^{-p} w^{-q}$,  $F^p= \bigoplus_{p' \geq p} V^{p',
    n-p'}$ and $V^{p,q}= F^p \cap \ol{F^{q}}$.

For example there exists a unique $R$-Hodge structure of weight $-2n$
on $V= (2 \pi i)^n R$ called the Tate-Hodge structure and denoted
$R(n)$.

A morphism $f:(V, \varphi) \lo (V', \varphi')$ of pure $R$-Hodge structures
is a morphism $f: V \lo V'$ of $R$-module such that $f_\RR: V_\RR \lo V'_\RR$ commutes with
the action of $\SS$.

We denote by $\He_R$ the category of pure $R$-Hodge structures and,
for every $n \in \ZZ$, by $\He_R^n$ the full subcategory of $\He_R$ of pure $R$-Hodge structures of weight
$n$.

If $(V, \varphi)$ is a pure $R$-Hodge structure of weight $n$, a
polarisation for $(V, \varphi)$ is a morphism of $R$-Hodge structures $Q:
V^{\otimes 2} \lto R(-n)$ such that $(2 \pi i)^n Q(x, \varphi(i)y)$
is a positive-definite bilinear form on $V_\RR$. 

\subsection{Mixed Hodge structures}

We denote by $K$ the field $R \otimes_\ZZ \QQ$.

A mixed $R$-Hodge structure ($R$MHS) is a triple $(V, W_\bullet,
F^\bullet)$ consisting of a Noetherian $R$-module $V$, a finite
ascending filtration $W_{ \bullet}$ of $V_K:= V \otimes_R K$ 
(called the weight filtration) and a finite
decreasing filtration $F^{\bullet}$ of $V_\CC$ (called the Hodge
filtration) such that for each $n
\in \ZZ$ the couple $(\Gr_n^W V_K, \Gr_n^W(F^\bullet))$ is a pure
$K$-Hodge structure of weight $n$. 

A pure $R$-Hodge structure $V$ of weight $n \in \ZZ$ is then a special
case of a mixed $R$-Hodge structure by defining the weight filtration as
$W_{n'}V_K =V_K$ for $n' \geq n$ and $W_{n'} V_K = 0$ for $n'<n$. The notions
of weight greater or smaller than $n \in \ZZ$ are defined in the
obvious way. 

We say that an $R$MHS is of type
$\varepsilon \subset \ZZ \times \ZZ$ if the Hodge numbers $h^{p,q} :=
\dim_\CC (\Gr^W_{p+q} V_K)^{p,q}$ are zero for $(p,q) \not \in
\varepsilon$ and non-zero for $(p,q) \in
\varepsilon$.

A morphism $f:(V, W_\bullet, F^\bullet) \lo (V', W'_\bullet,
F'^\bullet)$ of $R$MHS is a morphism $f: V \lo V'$ of $R$-modules
respecting the weight and Hodge filtration.

A graded polarization of a mixed $R$-Hodge structure is the datum of a
polarization on the pure $K$-Hodge structure $\Gr^W V_K$.

Following \cite[Th.2.3.5]{De2}, the category $\MH_R$ of mixed $R$-Hodge
structures is Abelian (where the kernels and cokernels of morphisms are
endowed with the induced filtrations); the functors $\Gr^W_n : \MH_R
\lo \He_K^n$ and $\Gr^p_F: V \mapsto \Gr^p_F(V_\CC)$ are exact. Moreover every morphism $f: (V, W_\bullet, F^\bullet) \lo (V',
W'_\bullet, F'^\bullet)$ of $R$MHS is strictly compatible with
the weight and Hodge filtrations (meaning that the inclusions $f(W_n)
\subset W'_n$ and $f(F^p) \subset F'^p$ satisfy $f(W_n) = f(V_K) \cap
W'_n$ and $f(F^p) = f(V_\CC) \cap F'^p$).

We wish to extend our group-theoretic description from pure Hodge
structures to mixed ones. Let $(V, W_\bullet, F^\bullet)$ be a mixed
$\RR$-Hodge structure. A splitting of $(V, W_\bullet, F^\bullet)$ is a
bigrading $V_\CC = \bigoplus_{p,q} V^{p,q}$ such that $W_n V =
\sum_{p+q \leq n} V^{p,q}$ and $F^p = \sum_{r\geq p} V^{r,s}$. Deligne proved that any mixed
$\RR$-Hodge structure admits a unique preferred splitting:

\begin{prop} (Deligne) Let $(V, W_\bullet, F^\bullet)$ be a mixed
$\RR$-Hodge structure. It admits a unique splitting $(V^{p,q})_{p,q}$
satisfying:
\begin{equation} \label{splitting} 
V^{p,q} = \ol{V^{q,p}} \mod \bigoplus_{r<p, s<q} V^{r,s}.
\end{equation}
This splitting is functorial.
\end{prop}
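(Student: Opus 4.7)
The plan is to construct the splitting explicitly by an intersection formula involving $F^\bullet$, $\overline{F^\bullet}$ and $W_\bullet$, and then to extract both uniqueness and functoriality from the formula. Concretely, I would set
$$V^{p,q} := F^p \cap W_{p+q} \cap \Bigl(\overline{F^q} \cap W_{p+q} + \sum_{j\geq 1} \overline{F^{q-j}} \cap W_{p+q-j-1}\Bigr),$$
Deligne's canonical candidate, and then verify in turn the following four points.

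First, a reduction to the pure case. Passing to $\Gr_{p+q}^W V_\CC$ the correction sum $\sum_{j\geq 1}\overline{F^{q-j}} \cap W_{p+q-j-1}$ dies, so the image of $V^{p,q}$ in $\Gr^W_{p+q}V_\CC$ coincides with the usual $(p,q)$-piece of the pure Hodge structure on the graded piece; in particular $V^{p,q} = \overline{V^{q,p}}$ modulo $W_{p+q-1}V_\CC$. Second, using strictness of $F^\bullet$ with respect to $W_\bullet$ on the graded pieces (which holds because $(V,W_\bullet,F^\bullet)$ is a mixed Hodge structure), I would prove by induction on the length of the weight filtration that $V_\CC = \bigoplus_{p,q} V^{p,q}$ and that this bigrading splits both $W_\bullet$ and $F^\bullet$, i.e.\ $W_nV_\CC = \bigoplus_{p+q\leq n}V^{p,q}$ and $F^pV_\CC = \bigoplus_{r\geq p}V^{r,s}$. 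Third, the congruence \eqref{splitting} is essentially built into the formula: the correction term lives in $W_{p+q-1}V_\CC$ and, expanded in the bigrading produced in the previous step, has components only of bidegrees $(r,s)$ with $r<p$ and $s<q$.

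For uniqueness, suppose $(V^{p,q})_{p,q}$ and $(\tilde V^{p,q})_{p,q}$ both satisfy the hypotheses. Both induce the same (canonical) Hodge decomposition on each $\Gr^W_n V_\CC$, hence $V^{p,q} - \tilde V^{p,q} \subset W_{p+q-1}V_\CC$. The congruence \eqref{splitting} applied to both splittings yields
$$V^{p,q} \equiv \overline{V^{q,p}} \equiv \tilde V^{p,q} \pmod{\textstyle\bigoplus_{r<p,\,s<q} V^{r,s}},$$
and then a downward induction on $p+q$ (using that the weight filtration has finite length) forces $V^{p,q} = \tilde V^{p,q}$. Functoriality is then automatic: if $f : (V,W,F) \to (V',W',F')$ is a morphism of $\RR$MHS, it is strictly compatible with $W$ and $F$ (recalled just before the statement), so $f$ sends each ingredient of the explicit formula for $V^{p,q}$ into the corresponding ingredient for $V'^{p,q}$, whence $f(V^{p,q}) \subset V'^{p,q}$.

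The main obstacle is the second step: establishing that the formula genuinely produces a direct-sum bigrading of $V_\CC$ splitting both filtrations. The induction on the weight filtration is somewhat delicate because the correction terms $\overline{F^{q-j}}\cap W_{p+q-j-1}$ interact non-trivially with $F^p$, and one must repeatedly invoke strictness of $F$ with respect to $W$ on the graded pieces (equivalently, the fact that $\Gr^W_n$ is exact on $\MH_\RR$) to conclude that the partial sums $\bigoplus_{p+q\leq n} V^{p,q}$ fill out exactly $W_n V_\CC$. Once this is secured, the remaining verifications are formal consequences.
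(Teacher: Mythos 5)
The paper states this as a known result of Deligne and gives no proof, so there is nothing in the source to compare against; what can be assessed is whether your proof sketch is sound. Your construction is the standard one: the formula you write down for $V^{p,q}$ is Deligne's canonical bigrading (yours is just the reindexing $j\mapsto j+1$ of the more common form $\sum_{j\geq 2}\overline{F^{q-j+1}}\cap W_{p+q-j}$), and the plan --- check that it is a splitting by induction on the length of $W_\bullet$, read the congruence off the formula, deduce functoriality from the explicit formula since a morphism of $\RR$MHS preserves $W_\bullet$, $F^\bullet$ and $\overline{F^\bullet}$ --- is the usual route.

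There is, however, a real gap in the uniqueness step as you have written it. You assert the chain
\begin{equation*}
V^{p,q}\equiv\overline{V^{q,p}}\equiv\tilde V^{p,q}\pmod{\textstyle\bigoplus_{r<p,\,s<q}V^{r,s}},
\end{equation*}
but the defining congruence for the second splitting is $\tilde V^{p,q}\equiv\overline{\tilde V^{q,p}}\pmod{\bigoplus_{r<p,\,s<q}\tilde V^{r,s}}$, with a priori different correction subspaces; nothing justifies the middle equivalence $\overline{V^{q,p}}\equiv\tilde V^{p,q}$. The proposed induction on $p+q$ cannot repair this, because $V^{p,q}$ and $V^{q,p}$ sit at the same level $p+q$ and the congruence couples them, so the step is circular. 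The clean fix is not to play two abstract splittings against each other but to show directly that any bigrading $J^{\bullet,\bullet}$ satisfying the three stated properties lies inside your explicit $I^{p,q}$: the containments $J^{p,q}\subset F^p\cap W_{p+q}$ are immediate, and iterating the congruence (using $J^{r,s}\subset W_{r+s}$ and, after one more application of the congruence, $J^{r,s}\subset\overline{F^s}$ modulo strictly lower weight) places $J^{p,q}$ inside the second factor of the intersection defining $I^{p,q}$; the two direct-sum decompositions then force equality. With that replacement, the remainder of your argument (reduction to the pure case, functoriality) goes through.
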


\begin{rem}
A mixed $\RR$-Hodge structure $(V, W_\bullet, F^\bullet)$ such that
Deligne's splitting satisfies moreover $V^{p,q} = \ol{V^{q,p}}$ is
said to be {\em split over $\RR$}. 
\end{rem}

Given a mixed $R$-Hodge structure $(V, W_\bullet, F^\bullet)$,
Deligne's splitting on $V_\RR$ defines a unique morphism $\varphi_\CC: \SS_\CC
\lo \GL(V_\CC)$ such that $V^{p,q}$ is the eigenspace for the
character $(z, w) \mapsto z^{-p} w^{-q}$ of $\SS_\CC$. When $(V,W_\bullet, F^\bullet)$ splits over $\RR$ the morphism $\varphi_\CC$ is
the complexification of a morphism $\varphi: \SS \lo \GL(V_\RR)$. In
particular we recover our initial definition of a pure $R$-Hodge
structure. 

Conversely:

\begin{prop} \cite[1.4 and 1.5]{Pink89} \label{propPink}
Let $V$ be a Noetherian $R$-module and $\varphi_\CC: \SS_\CC \lo \GL(V_\CC)$ be a group morphism. It
defines an $R$MHS on $V$ if and only if, denoting by $\PP$ the
$K$-algebraic group Zariski
closure over $K$ of $\varphi_\CC(\SS_\CC)$, by $\W$ the unipotent
radical of $\PP$ and by $\pi: \PP \lo \G:= \PP/\W$ the reductive
quotient of $\PP$, the following conditions are satisfied:
\begin{itemize}
\item[(1)] The composite $\SS_\CC \stackrel{\varphi_\CC}{\lo} \PP_\CC \stackrel{\pi}{\lo} \G_\CC$ is
  defined over $\RR$.
\item[(2)] The composite $\G_{m, \RR} \stackrel{w}{\lo} \SS
  \stackrel{\pi \circ \varphi}{\lo}
  \G_\RR$ is defined over $K$.
\item[(3)] The weight filtration of the mixed-$R$ Hodge structure on
  $\p$ defined by $\Ad_\PP \circ \varphi_\CC$ satisfies $W_{-1}(\p) = \w$, where $\w$ denotes the Lie algebra of
  the $K$-group $\W$.
\end{itemize}
\end{prop}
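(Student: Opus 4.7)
My approach is to translate $\varphi_\CC$ back and forth with the filtration data $(W_\bullet V_K, F^\bullet V_\CC)$, using the known bijection for pure Hodge structures on each graded piece. The reductive quotient $\G$ controls the pure (graded) part while the unipotent radical $\W$ accounts for the extension data; conditions (1)--(3) encode their compatibility.

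\textbf{Necessity.} Assume $\varphi_\CC$ defines an $R$MHS via Deligne's splitting. Then $\PP$ preserves $W_\bullet V_K$ and $F^\bullet V_\CC$. Each graded piece $\Gr^W_n V$ is a pure $K$-Hodge structure of weight $n$, hence arises from a real morphism $\SS \to \GL((\Gr^W_n V)_\RR)$. The group $\W$ must act trivially on $\Gr^W_\bullet V$, since its image there would be both reductive and unipotent; consequently $\G$ acts faithfully on $\Gr^W_\bullet V$, and the composite $\SS_\CC \to \G_\CC$ coincides with the real morphism assembled from the graded pieces, yielding (1). The weight cocharacter acts by the scalar $t \mapsto t^{-n}\Id$ on $\Gr^W_n V$, a $K$-rational datum, giving (2). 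For (3), triviality of $\W$ on graded pieces yields $\w \subset W_{-1}\p$; conversely, an element of $W_{-1}\p$ projects to a nilpotent element of the reductive $\g$, hence projects to zero and lies in $\w$.

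\textbf{Sufficiency.} Assume (1)--(3). Define $V^{p,q} \subset V_\CC$ as the $(z^{-p}w^{-q})$-eigenspace under $\varphi_\CC$, and set $F^p V_\CC := \bigoplus_{r \geq p} V^{r, \cdot}$. Let $V_n$ be the $(-n)$-eigenspace of $\varphi_\CC \circ w_\CC$, and set $W_n V_\CC := \bigoplus_{m \leq n} V_m$. By (3), $\W_\CC$ acts trivially on $\Gr^W_\bullet V_\CC$, so the filtration $W_\bullet V_\CC$ depends only on the image of $\varphi_\CC \circ w_\CC$ in $\G_\CC$; by (1)--(2), this image is a $K$-rational cocharacter $\G_{m,K} \to \G_K$, and pulling back the induced grading on the faithful $\G$-representation $\Gr^W_\bullet V$ produces a $K$-rational filtration $W_\bullet V_K$ whose complexification is $W_\bullet V_\CC$. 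On each $\Gr^W_n V$, condition (1) furnishes the reality $(\Gr^W_n V)^{p,q} = \ol{(\Gr^W_n V)^{q,p}}$, showing that the induced structure is pure of weight $n$ over $K$.

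\textbf{Main obstacle.} The subtlest point is verifying Deligne's compatibility \eqref{splitting}, namely $V^{p,q} \equiv \ol{V^{q,p}} \pmod{\bigoplus_{r<p, s<q} V^{r,s}}$, which is what distinguishes the bigrading of a mixed $\RR$-Hodge structure from that of an arbitrary $\SS_\CC$-representation. The discrepancy $\varphi_\CC \cdot \ol{\varphi_\CC}^{-1}$, a priori in $\PP(\CC)$, lies in $\W_\CC$ by (1); by (3), $\W_\CC$ acts through $\Ad\circ\varphi_\CC$ with strictly negative $(p,q)$-weights, so it shifts the bigrading precisely into $\bigoplus_{r<p, s<q} V^{r,s}$. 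This is where the full force of (3) is needed: not merely that $\W$ preserves $W_\bullet$, but that the $(p,q)$-shifts it induces are strictly downward and leftward, which is exactly Deligne's compatibility and closes the equivalence.
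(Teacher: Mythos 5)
The paper does not prove this proposition; it is cited from Pink's thesis \cite[1.4 and 1.5]{Pink89}, so there is no proof in the paper against which to compare your argument. Evaluating the attempt on its own terms, two steps go wrong.

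In the necessity direction you write that an element of $W_{-1}\p$ ``projects to a nilpotent element of the reductive $\g$, hence projects to zero.'' This is a non-sequitur: the Lie algebra of a reductive group is full of nonzero nilpotent elements. The correct reason is that $\g = \p/\w$ carries, via $\Ad \circ (\pi \circ \varphi_\CC)$, a \emph{pure} Hodge structure of weight $0$ (by (1), $\pi\circ\varphi_\CC$ is real, and the adjoint of a real morphism $\SS \to \G_\RR$ is pure of weight $0$), so $W_{-1}\g = 0$; the projection $\p \to \g$ is a morphism of mixed Hodge structures and hence carries $W_{-1}\p$ into $W_{-1}\g = 0$, giving $W_{-1}\p \subset \w$.

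In the sufficiency direction the paragraph on ``the main obstacle'' is where the argument breaks down. Condition (3) says $\w = W_{-1}\p$, which means the $\Ad\circ\varphi_\CC$-weights $(p,q)$ occurring in $\w$ satisfy $p+q<0$. You assert that $\W_\CC$ acts with ``strictly negative $(p,q)$-weights,'' meaning $p<0$ \emph{and} $q<0$, and conclude that it shifts $V^{p,q}$ into $\bigoplus_{r<p,\,s<q}V^{r,s}$. That stronger assertion does not follow from (3): $\w$ can have components of Hodge type $(0,-1)$ or $(-1,0)$ (this already happens for an extension of a weight-$1$ by a weight-$0$ Hodge structure), and a $(0,-1)$-component sends $V^{p,q}$ into $V^{p,q-1}$, which is \emph{not} in $\bigoplus_{r<p,\,s<q}V^{r,s}$. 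So the verification of Deligne's compatibility~\eqref{splitting} as written does not close. Either one must argue directly that the cocycle $\varphi_\CC\,\ol{\varphi_\CC}^{-1}$ takes values not merely in $\W(\CC)$ but in $\exp\bigl(\bigoplus_{r,s<0}\w_\CC^{r,s}\bigr)$ (which needs more than (3)), or one should note that, depending on how Pink sets up 1.4, ``defines an $R$MHS'' may only require that the extracted $(W_\bullet, F^\bullet)$ is an $R$MHS, in which case the Deligne-compatibility paragraph is answering a question the proposition does not ask.
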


\begin{rem}
Notice that, by $(2)$,  $\Ad_\PP \circ \varphi_\CC$ endows $\g$ with a pure
$K$-Hodge structure of weight $0$, hence, by $(3)$, $\p$ with a
mixed $K$-Hodge structure of weight $\leq 0$. In particular $W_0 \p
= \p$.
\end{rem}

\subsection{Mumford-Tate groups}
The category of mixed $K$-Hodge structures is a $K$-linear tensor category which is rigid
and has an obvious exact faithful $K$-linear tensor functor $\omega: (V_K,
W_\bullet, F^\bullet) \mapsto V_K$.

For any $R$MHS $(V, W_\bullet, F^\bullet)$ we denote by $\langle V
\rangle$ the Tannakian subcategory of $\MH_K$
generated by $(V_K, W_\bullet, F^\bullet)$ and $\omega_V$ the
restriction of the tensor functor $\omega$ to $\langle V \rangle$; in
other words $\langle V \rangle$ is the smallest full subcategory
containing $(V_K, W_\bullet, F^\bullet)$ and the trivial $K$MHS and
stable under $\oplus$, $\otimes$, and taking subquotients. Then the
functor $\Aut^\otimes (\omega_V)$ is representable by some closed
$K$-algebraic subgroup $\MT(V)$ of $\GL(V_K)$, called the Mumford-Tate
group of $\VV$, and $\omega_V$ defines
an equivalence of categories $\langle V \rangle \simeq \Rep_K \MT(V)$
(\cite[II, 2.11]{DM}).

There are various equivalent definitions for $\MT(V)$, in particular
in terms of Hodge tensors. Recall
that a Hodge class for $V$ is any vector in $F^0V_\CC \cap
W_0V_K$. Let $T^{m, n}V_K$ denote the mixed
$K$-Hodge structure $V^{\otimes m} \otimes \HHom(V,
R)^{\otimes n} \otimes_R K$. A Hodge tensor for $V$ is a Hodge class
in some $T^{m, n}V_K$.

\begin{lem} \cite[Lemma2]{An92}
Let $(V, W_\bullet, F^\bullet)$ be a mixed $R$-Hodge structure. Then:

(a) any tensor fixed by $\PP(V)$ in some $T^{m, n}V_K$ is a Hodge
tensor. Conversely $\PP(V)$ is the stabilizer in $\GL(V_K)$ of the
Hodge tensors for $V$.

(b) $\MT(V)$ is the $K$-Zariski-closure
of the image of $\varphi$ in $\GL(V_K)$ (hence $\MT(V)$ is connected),
and if moreover $V$ is pure polarizable then $\MT(V)$ is reductive.

(b) The group $\MT(V)$ preserves $W_\bullet$ and is an extension of
the group $\MT(\Gr^W V_K)$ by a unipotent subgroup; in particular if
$V$ is graded-polarizable then $\MT(\Gr^W V_K)$ is the quotient of
$\MT(V)$ by its unipotent radical.
\end{lem}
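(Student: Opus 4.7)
The plan is to deduce all three parts from the Tannakian equivalence $\omega_V : \langle V \rangle \stackrel{\simeq}{\lo} \Rep_K \MT(V)$ recalled just before the lemma, together with the key observation that a tensor $t \in T^{m,n} V_K$ is a Hodge tensor if and only if it is fixed by $\varphi_\CC(\SS_\CC)$ acting on $T^{m,n} V_\CC$; this is because $\varphi_\CC(\SS_\CC)$ induces the Hodge decomposition on $T^{m,n} V_\CC$, and being a Hodge tensor precisely means lying in the $(0,0)$-component of that decomposition.

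For (a), standard Tannakian duality applied to $\omega_V$ shows that a vector $t \in T^{m,n} V_K$ is $\MT(V)$-fixed if and only if it corresponds to a morphism $K(0) \to T^{m,n} V_K$ in $\MH_K$, i.e.\ if and only if $t$ is a Hodge tensor; hence $\MT(V)$ is the common stabilizer of the Hodge tensors for $V$. To match this with $\PP(V)$, the $K$-Zariski closure of $\varphi(\SS)$: every Hodge tensor is fixed by $\varphi_\CC(\SS_\CC)$ by the key observation, hence by $\PP(V)$, giving $\PP(V) \subseteq \MT(V)$. For the reverse I would invoke the Chevalley-type fact, valid in characteristic zero, that any closed $K$-subgroup of $\GL(V_K)$ is the common stabilizer of some finite collection of tensors $t_i \in T^{m_i,n_i}V_K$; applied to $\PP(V)$, each such $t_i$ is fixed by $\varphi_\CC(\SS_\CC)$, hence is a Hodge tensor, hence is fixed by $\MT(V)$, yielding $\MT(V) \subseteq \PP(V)$. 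Both assertions of (a) follow.

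Part (b) is then immediate from the equality $\MT(V) = \PP(V)$ just established. Connectedness follows because $\SS_\CC$ is connected and the $K$-Zariski closure of the image of a connected group is connected. Reductivity in the pure polarizable case follows from Deligne's theorem that the category of polarizable pure $K$-Hodge structures is semisimple, combined with the standard fact that the Tannakian group of a semisimple neutral Tannakian category in characteristic zero is reductive.

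For (c), the weight filtration on every object of $\MH_K$ is a filtration by subobjects, and $\Gr^W_\bullet$ is an exact tensor functor. Transported through $\omega_V$, this shows that $\MT(V)$ preserves $W_\bullet V_K$ and produces a canonical surjection $\MT(V) \twoheadrightarrow \MT(\Gr^W V_K)$ whose kernel $U$ preserves $W_\bullet$ while acting trivially on $\Gr^W V_K$; such a subgroup of a faithful representation is necessarily unipotent. When $V$ is graded-polarizable, $\MT(\Gr^W V_K)$ is reductive by (b), so this extension realizes the Levi decomposition and identifies $U$ with the unipotent radical of $\MT(V)$. The only substantive technical input beyond formal Tannakian bookkeeping and Deligne's semisimplicity theorem is the Chevalley-style step invoked in (a), which is where I expect a careful statement (stabilizer of finitely many tensors rather than merely of a line) to be needed; once that is in hand, the three parts follow in the order above.
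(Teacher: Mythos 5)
The paper does not give a proof here: it simply cites Andr\'e \cite[Lemma 2]{An92}, after having set up the Tannakian formalism ($\langle V\rangle\simeq\Rep_K\MT(V)$) in the preceding paragraph. So there is no in-text argument to compare you against; what you have done is reconstruct the standard Tannakian proof, and the overall shape of your reconstruction is the right one.

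Two places deserve sharper statements than you gave them.

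First, your ``key observation'' that $t\in T^{m,n}V_K$ is a Hodge tensor iff it is fixed by $\varphi_\CC(\SS_\CC)$ is true but not a tautology. The paper's definition of a Hodge class is a $K$-rational vector in $F^0 V_\CC\cap W_0 V_K$, not ``a rational vector in the $(0,0)$-piece of the Deligne bigrading.'' The equivalence of these two characterisations is a genuine (if well-known) lemma: one shows, using the congruence $\overline{V^{p,q}}\equiv V^{q,p}\pmod{\bigoplus_{r<q,\,s<p}V^{r,s}}$, that a real vector in $F^0\cap W_0$ has no nonzero Deligne component outside $V^{0,0}$ (equivalently: the morphism $K(0)\to T^{m,n}V_K,\ 1\mapsto t$, is a morphism of MHS, hence respects the Deligne bigrading by its functoriality, hence $t\in(T^{m,n}V_\CC)^{0,0}$). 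You should say this rather than assert it as the definition.

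Second, the form of Chevalley's theorem you invoke --- ``any closed $K$-subgroup of $\GL(V_K)$ is the common fixer of a finite collection of tensors'' --- is not the one that is generally available: without extra hypotheses (no nontrivial characters, etc.) one only gets a stabiliser of a \emph{line}, not the exact fixer of a tensor. The cleaner route, and probably closer to what Andr\'e does, avoids tensors entirely: by the subobject criterion for Tannakian subgroups (\cite[Prop.\ 2.21(b)]{DM}), to prove $\PP(V)=\MT(V)$ it suffices to check that every $\PP(V)$-stable $K$-subspace $W'$ of some $T^{m,n}V_K$ is a sub-MHS. A $\PP(V)$-stable $W'$ is $\varphi_\CC$-stable, so $W'_\CC$ is a direct sum of Deligne bigraded pieces, and the conjugation congruence above shows the induced bigrading on $\Gr^W W'$ makes it a pure Hodge structure; hence $W'$ is a sub-MHS. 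This sidesteps the line-versus-tensor issue entirely, and it also replaces your intermediate claim ``$\MT(V)$ is the common stabiliser of the Hodge tensors,'' which itself requires a Chevalley input. With these two repairs, your argument for (a) and (b) is complete, and your treatment of (c) --- weight filtration by subobjects, exactness of $\Gr^W_\bullet$, kernel acting trivially on the associated graded hence unipotent, Levi identification in the graded-polarizable case --- is correct as written.
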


\subsection{Variations of mixed Hodge structure} \label{KS}
Hodge theory as recalled above can be considered as the particular
case over a point of Hodge theory over an
arbitrary base in the category of complex manifolds.

Let $S$ be a complex manifold and $\OO_S$ its sheaf of holomorphic functions. A variation of mixed $R$-Hodge
structures ($R$VMHS) over
$S$ is a triple $(\VV, W_\bullet, F^\bullet)$, where:
\begin{itemize}
\item[(1)] $\VV$ is a locally constant $R_{S}$-module on $S$, 
\item[(2)] $W_\bullet$ is a finite increasing filtration (called the
  weight filtration) of the $K$-local
  system $\VV_K$ by $K$-local sub-systems,
\item[(3)] $F^\bullet$ is a finite descending filtration (called the
  Hodge filtration) of the holomorphic vector bundle $\VVV:= \VV \otimes_{R_{S}} \OO_{S}$
 by holomorphic subbundles,
\end{itemize}
such that 
\begin{itemize}
\item[(a)] for each $s \in S$, the triple $(\VV_s, (W_\bullet)_{s},
  F^\bullet_s)$ is a mixed $R$-Hodge structure.
\item[(b)] the flat connection $\nabla: \VVV \lo \VVV \otimes \Omega^1_S$
  whose sheaf of horizontal sections is $\VV_\CC$ satisfies the
  Griffiths' transversality condition
\begin{equation} \label{GT}
\nabla F^\bullet \subset \Omega^1_{S} \otimes F^{\bullet
  -1}\;\;.
\end{equation}
\end{itemize}

A graded polarization $\Psi$ for $(\VV, W_\bullet, F^\bullet)$ is a
sequence
$$ \Psi_k: \Gr^W_k (\VV_K) \times \Gr^W_k (\VV_K)  \lo K(-k)_S$$ 
of $\nabla$-flat bilinear forms inducing graded polarisations $\Psi_{k,s}$ on
the mixed $R$-Hodge structure  $(\VV_s, (W_\bullet)_{s},
  F^\bullet_s)$ for all $s \in S$.

Variations of mixed Hodge structure over a smooth quasi-projective base $S$ can have a pathological behaviour at infinity. The {\em
  admissibility} condition was defined by Kashiwara \cite{Ka} to remedy this
problem. As its statement is technical and we won't need it in this
expository note (but it will be crucial in the proofs!), we content
ourselves to refer to \cite{Ka}, \cite{StZu} and
\cite[Def. 7.2]{BrZu}. Notice that every geometric variation of mixed
Hodge structure is admissible. {\em From now on any $\ZZ$VMHS is graded-polarizable and admissible}.

An important infinitesimal invariant associated to an $R$VMHS $(\VV, W_\bullet, F^\bullet)$ over
$S$ is the Kodaira-Spencer map 
\begin{equation} \label{ks1}
\ol{\nabla}: TS^\sm \lo \Gr^{-1}_{F} (W_0 \End \VV \otimes_\QQ
\OO_S)\;\;,
\end{equation}
which is nothing but the flat connection $\nabla$ considered at first
order. Indeed the maps $\nabla: F^p  \lo F^{p-1} 
\otimes_{\OO_S} \Omega^1_S$ and  $\nabla: F^{p-1} \lo F^{p-2}
\otimes_{\OO_S} \Omega^1_S$ induces an $\OO_S$-linear morphism
$\ol{\nabla} : \Gr^p_F \VVV \lo \Gr^{p-1} \VVV \otimes_{\OO_S}
\Omega^1_S$. The alternative writing~(\ref{ks1}) follows from the fact
that $\nabla$ respects the weight filtration. 

\subsection{Hodge locus and special subvarieties} \label{VHS}
Let $R= \ZZ$ or $\QQ$. With the notations of the previous section, 
for $s$ in $S$ let $\PP_s \subset \GL(\VV_{\QQ, s})$ denote the
Mumford-Tate group of the fiber $\VV_s$. By \cite[Lemma 4]{An92}
(following Deligne in the pure case)  the group $\PP_s$ is locally
constant on the complement $S^0:= S \setminus HL(S, \VV)$ of a
countable union $\HL(S, \VV)
\subset S$ of proper irreducible analytic subvarieties of $S$. The
locus $\HL(S, \VV)$ is called the Hodge locus of $(S, \VV)$. Let $\pi_S: \tilde{S}
\lo S$ denotes a universal covering and 
choose a flat trivialization $\pi_S^* \VV_\QQ \simeq \tilde{S} \times
V$. The choice of a point $\tilde{s} \in \tilde{S}$ such that
$\pi_S(\tilde{s}) = s$ gives an identification $\VV_{\QQ, s} \simeq
V$, hence an injective homomorphism $i_{\tilde{s}}: \MT_s
\hookrightarrow \GL(V)$. For $s \in S^0$ (such a point of $S$ is
called Hodge generic) the image $\MT_{S}(\VV):= \im(i_{\tilde{s}}) \subset
\GL(V)$ neither depends on $w$ nor on the choice of $\tilde{s}$ (it is
called the generic Mumford-Tate group of $(S,V)$); for
all $s \in HL(S, \VV)$ and $\tilde{s} \in \ti{S}$ above $s$, the image
of $i_{\tilde{s}}$ is a proper subgroup of $\MT_{S}(\VV)$.

When $S$ is quasi-projective and $\VV$ is of geometric origin, the Hodge conjecture implies that
$\HL(S, \VV)$ is in fact a countable union of closed irreducible algebraic
subvarieties of $S$. The following result is fundamental in the study of
Hodge loci:
\begin{theor}[\cite{CDK95} in the pure case, \cite{BPS} in the mixed one]
Suppose that $S$ is quasi-projective. Then the Hodge locus $\HL(S,
\VV)$ is a countable union of closed irreducible {\em algebraic}
subvarieties of $S$.
\end{theor}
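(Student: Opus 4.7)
The plan is to reduce the global statement to a local analytic extension problem at infinity, then conclude by Chow's theorem. First I would fix non-negative integers $m,n$ and a flat section $\wt{\alpha}$ of $T^{m,n}\VV_\QQ$ on the universal cover $\pi_S: \wt{S} \to S$, and look at the locus $Z_{\wt{\alpha}} \subset \wt{S}$ where $\wt{\alpha}(\wt{s})$ lies in $F^0 \cap W_0$ of the corresponding fiber of $T^{m,n}\VVV$. This is the zero set of the holomorphic section of $T^{m,n}\VVV/(F^0 \cap W_0)$ induced by $\wt{\alpha}$, hence analytic. Quotienting by the (countable) stabilizer in the monodromy group yields a closed analytic subvariety of $S$. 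Ranging over a countable set of such flat sections in integral lattices expresses $\HL(S,\VV)$ as a countable union of such pieces, so it suffices to prove that each resulting $Z \subset S$ is algebraic.

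For algebraicity I would choose, by Hironaka, a smooth projective compactification $\ol{S} \supset S$ with simple normal-crossings boundary $D := \ol{S} \setminus S$, and aim to show that the analytic closure $\ol{Z}$ in $\ol{S}$ is itself analytic; Chow's theorem then upgrades $\ol{Z}$ to a closed algebraic subvariety, whose intersection with $S$ is $Z$. Locally near a boundary point I would pick coordinates in which $D$ is cut out by $z_1 \cdots z_k = 0$ in a polydisk $\Delta^n$, lift $\VV_\QQ$ to the universal cover $\mathbb{H}^k \times \Delta^{n-k}$ of the complement, and trivialize it there. The admissibility hypothesis on $(\VV,W_\bullet,F^\bullet)$ supplies, for each cone of commuting monodromy logarithms $N_1,\ldots,N_k$ and its associated relative weight filtration, a nilpotent orbit that approximates the period map in Schmid's sense (and its admissible analogue in the mixed case), together with Deligne's canonical extension of the Hodge bundles to $\ol{S}$ with logarithmic behavior along $D$.

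The main obstacle, and the genuinely new input beyond the elementary local analysis of the first step, is showing that the condition $\wt{\alpha} \in F^0 \cap W_0$ cuts out an analytic subvariety of $\ol{S}$ near $D$ rather than only of its complement. The natural device is to replace $\wt{\alpha}$ by the single-valued twisted section $\exp\!\bigl(-\sum_i \tfrac{\log z_i}{2\pi i} N_i\bigr)\cdot \wt{\alpha}$, which extends holomorphically across $D$, and to test membership in the logarithmic extension of $F^0 \cap W_0$. The delicate issue is that ``lying in $F^0 \cap W_0$'' is then a priori an asymptotic condition on $\Delta^n \setminus D$, and one must show it is equivalent to the vanishing of honest holomorphic functions defined on all of $\Delta^n$. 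Establishing this equivalence requires fine control of the error between the true period map and its nilpotent orbit, governed by the several-variable $\SL_2$-orbit theorem in the pure case and by its admissible generalization (together with Kashiwara's relative weight filtration theorem) in the mixed case. Once this local analytic extension is in place, Chow's theorem on $\ol{S}$ gives the desired algebraicity of each component of $\HL(S,\VV)$.
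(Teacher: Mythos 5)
The paper does not prove this theorem: it is stated as a citation of \cite{CDK95} (pure case) and \cite{BPS} (mixed case), so there is no internal proof to compare against. Your sketch is a faithful outline of the strategy in those references---reduce to the algebraicity of the components cut out by flat sections becoming Hodge tensors, pass to an snc compactification $\ol{S}$, work with the twisted single-valued section in Deligne's canonical extension, and invoke the several-variable $\mathrm{SL}_2$-orbit estimates (together with admissibility and Kashiwara's relative weight filtration theorem in the mixed case) to show the locus extends analytically across the boundary, then apply Chow---so there is nothing to fault other than the inevitable imprecision in a top-level sketch of a highly technical argument.
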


The irreducible components of the Zariski-closure of the strata of $\HL(S,
\VV)$ where $\PP_s$ is locally constant are called
{\em special subvarieties} of $(S, \VV)$. Special subvarieties of
dimension zero are called {\em special points} of $(S, \VV)$. Special
points whose Mumford-Tate group is a torus are called {\em CM points}
(where CM stands for Complex Multiplication).

%%%%%%%%%%%%%%%%%%%%%%%

\section{Mixed Hodge varieties} \label{Mixed Hodge varieties}
In this section we define (connected) mixed Hodge
varieties, a generalization of (connected) mixed Shimura
varieties defined by Deligne \cite{De2} and Pink \cite{Pink89}. All the
ingredients are due to Pink \cite{Pink89}.  We refer to the recent
monograph \cite{GGK} for more details in the pure case.

\subsection{Mixed Hodge data} \label{mixed Hodge data}
In this section $R = K= \QQ$. We want to parametrize mixed $\QQ$-Hodge structures, with fixed weight
filtration and given Hodge numbers, by a homogeneous space described in
terms of homomorphisms $\varphi_\CC : \SS_\CC \lo \PP_\CC$ and carrying
a canonical complex structure. The following result of Pink will guide
our definition.

\begin{prop} \label{1.7} \cite[1.7]{Pink89} 
Let $(\PP, X_\PP)$ be a pair of a connected linear
algebraic group $\PP$ over $\QQ$ and a $\PP(\RR) \W(\CC)$-conjugacy
class $X_\PP$ in $\HHom(\SS_\CC, \PP_\CC)$ (where $\W$ denotes the
unipotent radical of $\PP$). Assume that for one $\varphi \in X_\PP$
(and then for any) the conditions $(1)$, $(2)$ and $(3)$ of
\Cref{propPink} are satisfied. 

Let $\rho: \PP \lo \GL(M)$ be any faithful (finite dimensional, algebraic) representation of $\PP$. Then:

\begin{itemize}
\item[(a)] The image $\cD_{\PP, X_\PP}$ of the obvious map 
$$ \psi: X_\PP \lo \{ \textnormal{mixed}\; \QQ\textnormal{-Hodge structure on M}\}$$
admits a unique structure of complex manifold such that the Hodge
filtration on $M_\CC$ depends analytically on $\psi(\varphi)$. This
structure is $\PP(\RR) \W(\CC)$-invariant and $\W(\CC)$ acts
analytically on $\cD_{\PP, X_\PP}$.
\item[(b)] The complex manifold $\cD_{\PP, X_\PP}$ is independent of the choice
  of the faithful representation $M$. 
 \end{itemize}
\end{prop}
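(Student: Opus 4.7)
The plan is to realize $\cD_{\PP, X_\PP}$ as an open subset of a homogeneous complex manifold $\check{\cD}_\PP$ built directly from $(\PP, \varphi_0)$, from which (a) will follow by restriction and (b) by inspection. First I would fix $\varphi_0 \in X_\PP$, set $F^\bullet_0 := \psi(\varphi_0)$, and check that the weight filtration on $M_\QQ$ induced by $\rho \circ \varphi$ is the \emph{same} for every $\varphi \in X_\PP$: by condition (2), $\pi \circ \varphi \circ w$ is a $\QQ$-rational central cocharacter of $\G$, and under $\PP(\RR)\W(\CC)$-conjugation (which factors through the adjoint action of $\G(\RR)$) it is fixed. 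So $\psi$ lands in the projective flag variety $\check{\cD}$ of decreasing filtrations of $M_\CC$ compatible with this fixed $W_\bullet$ and with the prescribed Hodge numbers. I would then form the $\PP(\CC)$-orbit $\check{\cD}_\PP := \PP(\CC) \cdot F^\bullet_0 \cong \PP(\CC)/F^0\PP_\CC$, with $F^0\PP_\CC$ the stabilizer of $F^\bullet_0$, a homogeneous complex submanifold of $\check{\cD}$; the image of $\psi$ is exactly the $\PP(\RR)\W(\CC)$-orbit of $F^\bullet_0$ inside $\check{\cD}_\PP$.

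The only substantive step, as I expect, will be showing that this $\PP(\RR)\W(\CC)$-orbit is open in $\check{\cD}_\PP$. Differentiating at $F^\bullet_0$ and using the identification $\Lie F^0\PP_\CC = F^0 \p_\CC$ (valid for any faithful $\rho$, since $F^\bullet \p_\CC$ is inherited from $F^\bullet \End M_\CC$), openness reduces to the Lie-algebra surjectivity
\begin{equation*}
\p_\RR + \w_\CC + F^0 \p_\CC = \p_\CC.
\end{equation*}
I would prove this via Deligne's bigrading $\p_\CC = \bigoplus \p^{p,q}$, exploiting condition (3) (the MHS on $\p$ has weights $\leq 0$ with $W_{-1}\p = \w$): pieces $\p^{p,q}$ with $p+q \leq -1$ lie in $\w_\CC$ and pieces with $p \geq 0$ lie in $F^0 \p_\CC$, so only the $\p^{-k,k}$ with $k \geq 1$ remain to be covered. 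These project to the reductive quotient $\g_\CC$, which is pure of weight $0$ and hence split over $\RR$; the conjugation symmetry $\overline{\p^{-k,k}} \equiv \p^{k,-k} \pmod{\w_\CC}$ then lets me lift a real form from $\g_\RR$ into $\p_\RR$ which surjects onto $\p^{-k,k}$ modulo $F^0\p_\CC$. Once openness is in hand, $\cD_{\PP, X_\PP}$ inherits a unique complex structure; uniqueness, $\PP(\RR)\W(\CC)$-invariance and analyticity of the $\W(\CC)$-action are then automatic from the $\PP(\CC)$-equivariance of the ambient $\check{\cD}_\PP$.

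For (b) I would finally observe that both $\check{\cD}_\PP$ and its open subset $\cD_{\PP, X_\PP}$ are intrinsic to $(\PP, \varphi_0)$. The subalgebra $F^0\p_\CC \subset \p_\CC$ depends only on $\Ad_\PP \circ \varphi_{0,\CC}$, and $F^0\PP_\CC$ coincides with the normalizer of $F^0\p_\CC$ in $\PP_\CC$, a parabolic-type subgroup determined by $\p$ alone. Two faithful representations $M_1, M_2$ therefore give rise to the same quotient $\PP(\CC)/F^0\PP_\CC$ and the same $\PP(\RR)\W(\CC)$-orbit, yielding canonically the same complex manifold. With the surjectivity identity of the previous paragraph established, everything else is formal homogeneous-space bookkeeping.
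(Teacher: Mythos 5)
Your proof follows the same route the paper sketches: realize $\cD_{\PP, X_\PP}$ as an open subset of the compact dual $\PP(\CC)/F^0\PP_\CC$ (the paper writes this, somewhat loosely, as $\PP(\CC)/\exp(F^0\p_\CC)$), with openness reducing to the Lie-algebra surjectivity $\p_\RR + \w_\CC + F^0\p_\CC = \p_\CC$, which you verify correctly via Deligne's bigrading and conditions (1)--(3). You simply expand the one-sentence sketch given in the text into a full argument, and the expansion is sound.
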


\Cref{1.7} is proved by noticing that, as the weight filtration is
constant on $M$ when $\varphi$ varies in $X_\PP$, the Hodge filtration
on $M$ gives an injective map of $\cD_{\PP, X_{\PP}}$ into the flag
manifold $\hat{\cD}_{\PP, X_\PP}:=\PP(\CC)/
\exp(F^0 \p_\CC)$, which is easily shown to be an open embedding.

\begin{rem}
It is important to notice that:

- the surjective map $\psi: X_\PP \lo
  \cD_{\PP, X_{\PP}}$ is not a bijection in general. This is the case if and
  only if $F^0 \w_\CC$ is trivial (see \cite[1.8 (b)]{Pink89}). 

- the group $\PP(\RR)$ does not in general act transitively on $\cD_{\PP,
    X_{\PP}}$ for the action of $\PP(\RR)$ on $\cD_{\PP,
    X_{\PP}}$ defined in \Cref{1.7}. It acts transitively on ``the set of real points of 
$\cD_{\PP, X_{\PP}}$'', namely the points for which the corresponding mixed Hodge
structure on (any) $M$ is split over $\RR$.
\end{rem}

\begin{defi} \label{mixed Hodge datum}
A pair $(\PP, X_\PP)$ as in \Cref{1.7} is called a
{\em mixed Hodge datum}. We call $\cD:= \cD_{\PP, X_\PP}$ the {\em mixed Mumford-Tate domain}
associated to the mixed Hodge datum $(\PP, X_\PP)$.
\end{defi}

\begin{defi}
A connected mixed Hodge datum is a triple $(\PP, X_\PP, \cD^+)$,
where $(\PP, X_\PP$ is a mixed Hodge datum and $\cD^+$ is a connected
component of $\cD:= \cD_{\PP, X_\PP}$.
\end{defi}

\begin{defi} \label{morphism Hodge data}
A morphism of mixed Hodge data $(\PP, X_\PP) \lo
(\PP', X_{\PP'})$ is
a homomorphism $\rho: \PP \lo \PP'$ of $\QQ$-algebraic groups which
induces maps $X_\PP \lo X_{\PP'}$. It naturally induces a holomorphic
map $\cD \lo \cD'$. A morphism of connected mixed
Hodge data $(\PP, X_\PP, \cD^+) \lo (\PP', X_{\PP'}, \cD'^+)$ is a
morphism of mixed Hodge data $(\PP, X_\PP, \cD) \lo
(\PP', X_{\PP'}, \cD')$ whose last component maps
$\cD^+$ to $\cD'^{+}$.
\end{defi}

\subsection{Mixed Hodge datum of Shimura type} \label{ShimType}

Let $(\PP, X_\PP)$ be a mixed Hodge datum and $M$ a representation of
$\PP$. The representation $M$ defines a $\PP(\RR) \W(\CC)$-equivariant local system $\M$ on $\cD$,
supporting a family of mixed Hodge structures: a flat weight
filtration $W_\bullet$ on $\M$ and a holomorphic Hodge filtration
$F^\bullet$ on $\M \otimes_\QQ \OO_{\cD}$ such that
for each $s \in \cD$, the triple $(\M_s, (W_\bullet)_{s},
  F^\bullet_s)$ is a mixed $\QQ$-Hodge structure. In most cases however,
  the Griffiths transversality condition~(\ref{GT}) is not satisfied:

\begin{prop} \cite[1.10]{Pink89} \label{1.10}
Let $(\PP, X_\PP)$ be a mixed Hodge datum and $M$ a representation of
$\PP$. The triple $(\M, W_\bullet, F^\bullet)$ is a variation of mixed
$\QQ$-Hodge structure on $\cD$ if and only if the Hodge
structure on $\p$ is of type
$$\{(-1,1), (0,0), (1, -1)\}, \{(-1, 0), (0, -1)\}, \{(-1, -1)\}\;\;.$$
\end{prop}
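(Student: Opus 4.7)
The plan is to reduce the Griffiths transversality condition on $(\M,W_\bullet,F^\bullet)$ at a point $\varphi\in\cD$ to a condition on the Hodge filtration of $\p_\CC$, and then combine this with the weight constraint on $\p$ and the $\QQ$-rationality (i.e.\ conjugation symmetry of Hodge numbers) to identify the allowed types.

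First I would identify the holomorphic tangent space of $\cD$. By the construction recalled after \Cref{1.7}, $\cD$ is an open submanifold of the flag manifold $\hat\cD = \PP(\CC)/\exp(F^0\p_\CC)$ (the subgroup $\exp(F^0\p_\CC)\subset\PP(\CC)$ makes sense because $\p$ carries a MHS of weight $\leq 0$, so $F^0\p_\CC$ is a Lie subalgebra). Consequently $T_\varphi\cD \simeq \p_\CC/F^0\p_\CC$. The representation $\rho\colon \PP\to\GL(M)$ (which may be assumed faithful, as in \Cref{1.7}) induces a $\PP$-equivariant map of filtered objects $\p_\CC\hookrightarrow\End(M_\CC)$, and this, composed with the quotient by $F^0$, is precisely the Kodaira-Spencer map of the variation $(\M,W_\bullet,F^\bullet)$ at~$\varphi$:
\begin{equation*}
\ol\nabla_\varphi\colon T_\varphi\cD = \p_\CC/F^0\p_\CC \lo \End(M_\CC)/F^0\End(M_\CC).
\end{equation*}

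Next I would reformulate Griffiths transversality~(\ref{GT}) at $\varphi$: it asserts that $\ol\nabla_\varphi$ takes values in $F^{-1}\End(M_\CC)/F^0\End(M_\CC)$. Equivalently, every $X\in\p_\CC$ lies in $F^{-1}\End(M_\CC)+F^0\p_\CC$. Since $\rho$ is faithful, the inclusion $\p\hookrightarrow\End(M)$ is a morphism of MHS, and by strict compatibility of morphisms with the Hodge filtration (Deligne's theorem recalled in \Cref{VMHS}) we have $\p_\CC\cap F^{-1}\End(M_\CC)=F^{-1}\p_\CC$. Thus Griffiths transversality is equivalent to
\begin{equation*}
\p_\CC = F^{-1}\p_\CC + F^0\p_\CC = F^{-1}\p_\CC,
\end{equation*}
i.e.\ to the vanishing of $\p^{p,q}$ for $p\leq -2$.

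Finally I would translate this vanishing into the list of Hodge types. The Hodge structure on $\p$ is defined over $\QQ$, and by the remark following \Cref{propPink} it has weight $\leq 0$, so $\p^{p,q}=0$ unless $p+q\leq 0$. The $\QQ$-structure gives the conjugation symmetry $h^{p,q}_\p = h^{q,p}_\p$ of Hodge numbers, so the condition $p\geq -1$ obtained from Griffiths transversality is equivalent to simultaneously $p\geq -1$ and $q\geq -1$. Intersecting with $p+q\leq 0$ leaves exactly the six bidegrees $(-1,1),(0,0),(1,-1),(-1,0),(0,-1),(-1,-1)$, grouped by weight $0,-1,-2$ as in the statement. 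Conversely, if $\p$ has only these types, then $\p_\CC = F^{-1}\p_\CC$ trivially and so $\ol\nabla_\varphi$ lands in $F^{-1}\End(M_\CC)/F^0\End(M_\CC)$, giving Griffiths transversality. The main technical point is the first step—the identification of the Kodaira-Spencer map as the map induced by $\rho$ on $\p_\CC/F^0\p_\CC$—but this is essentially built into the construction of $\cD$ as an orbit on a flag manifold and the fact that the horizontal sections of $\M\otimes\OO_\cD$ come from the flat model $M$.
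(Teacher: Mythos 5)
The paper does not reproduce a proof of this statement; it cites it directly from Pink's thesis \cite[1.10]{Pink89}. Your argument is correct and is essentially the argument one finds in Pink: you identify $T_\varphi\cD$ with $\p_\CC/F^0\p_\CC$ via the embedding of $\cD$ into $\hat\cD = \PP(\CC)/\exp(F^0\p_\CC)$, observe that the derivative of the Hodge filtration along the orbit is induced by $d\rho$, so that Griffiths transversality for the family $(\M,W_\bullet,F^\bullet)$ becomes $d\rho(\p_\CC)\subset F^{-1}\End(M_\CC)$; strictness of the MHS morphism $d\rho$ (for a faithful $M$) reduces this to $F^{-1}\p_\CC = \p_\CC$, and then the weight constraint $W_0\p=\p$ together with the conjugation symmetry $h^{p,q}_\p=h^{q,p}_\p$ pins down exactly the six listed bidegrees. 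The only caveat worth flagging is the one you already address in passing: the ``only if'' direction genuinely requires $M$ to be (at least almost) faithful, since for a non-faithful $M$ the Griffiths condition only constrains $\p/\ker d\rho$; the statement in the paper implicitly takes $M$ faithful, as in \Cref{1.7}.
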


\begin{defi}
A mixed Hodge datum $(\PP, X_\PP)$ satisfying the condition of
\Cref{1.10} is said to be of Shimura type.
\end{defi}

Let us now relate mixed Hodge data of Shimura type with mixed Shimura
varieties in the sense of \cite[]{Pink89}
Let $(\PP, X_\PP)$ be a mixed Hodge datum of Shimura type. Let
$\UU \subset \W$ be the unique connected subgroup such that $\Lie \UU
= W_{-2} \Lie \W$. Following \cite[1.15]{Pink89} we define $\cX
\subset X_\PP$ as the subset of the $\varphi_\CC$'s defined by the following condition stronger than
(\Cref{propPink}(1)):

\begin{itemize}
\item[(1')] $\SS_\CC \stackrel{\varphi_\CC}{\lo} \PP_\CC
  \stackrel{\pi'}{\lo} (\PP/ \UU)_\CC$ is
  defined over $\RR$.
\end{itemize}

In \cite[1.16]{Pink89} Pink proves that the restriction $\psi_{|\cX}:
\cX \lo \cD$ of $\psi: X_\PP \lo \cD$ is a bijection. The triple
$(\PP, \cD, \psi_{|\cX}^{-1})$ is then a mixed Shimura datum in the
sense of \cite[2.1]{Pink89}. Conversely if $(\PP, \cX, h)$ is a mixed
Shimura datum as in \cite[2.1]{Pink89} let us define $X_\PP$ as the $\PP(\RR) \W(\CC)$-conjugacy
class of any point of $h(\cX)$. Then $(\PP, X_\PP)$ is a
mixed Hodge datum.

\subsection{Connected mixed Hodge varieties} 

\subsubsection{Definitions}
We refer to \cite{bor} and \cite[p.33, 34, 42]{Milne} for an introduction to
arithmetic groups, congruence subgroups, neat subgroups. 

Let $(\PP, X_\PP, \cD^+)$ be a connected mixed Hodge datum. The
stabilizer $\PP(\RR)^+$ of $\cD^+$ is easily seen to be open in
$\PP(\RR)$. Set $\PP(\QQ)^+:= \PP(\QQ) \cap \PP(\RR)^+$. Then any
congruence subgroup $\Gamma \subset \PP(\QQ)^+$ acts properly
discontinuously on $\cD^+$, so that $\Gamma \backslash \cD^+$ is a
complex analytic space with at most finite quotient
singularities. Every sufficiently small finite index congruence subgroup of
$\Gamma$ acts freely on $\cD^+$. Replacing $\Gamma$ by such a subgroup
if necessary, the quotient $\Gamma \backslash \cD^+$ is thus a complex
manifold and the map $\cD^+ \lo \Gamma \backslash \cD^+$ is
unramified.

\begin{defi}(connected mixed Hodge variety) \label{Hodge defi}
Let $(\PP, X_\PP, \cD^+)$ be a connected mixed Hodge datum and $\Gamma
\subset \PP(\QQ)^+$ a congruence subgroup.
The complex analytic variety $\Hod_\Gamma^0(\PP, X_\PP, \cD^+):= \Gamma \backslash \cD^+$ is called the
connected mixed Hodge variety associated with $(\PP, X_\PP, \cD^+)$ and
$\Gamma$. It is called a connected (pure) Hodge variety if $\PP$ is
reductive. The class of an element $x \in \cD^+$ is denoted $[x] \in
\Gamma \backslash \cD^+$.
\end{defi}
 
As in \cite[3.4]{Pink89} one obtains:
\begin{lem}
Let $\rho: (\PP, X_\PP, \cD^+) \lo (\PP', X_{\PP'}, \cD'^+)$ be a morphism
of connected mixed Hodge data and $\Ga \subset \PP(\QQ)^+$ and
$\Gamma' \subset \PP'(\QQ)^+$ congruence lattices such that $\rho(\Ga)
\subset \Gamma'$. Then the map
$$ [\rho]: \Gamma \backslash \cD^+ \lo \Gamma' \backslash \cD'^+$$
mapping $[x]$ to $[\rho \circ x]$ is well-defined and holomorphic.
Such a map is called a Hodge morphism of connected mixed Hodge varieties.
\end{lem}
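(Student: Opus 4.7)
The plan is to argue by the standard three-step pattern for quotients of symmetric-like spaces by arithmetic groups: lift, check equivariance, descend.

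First I would produce the canonical lift. By \Cref{morphism Hodge data}, the group homomorphism $\rho: \PP \lo \PP'$ already induces a holomorphic map $\tilde{\rho}: \cD \lo \cD'$ whose last component sends $\cD^+$ into $\cD'^+$, since the triple is by assumption a morphism of connected mixed Hodge data. Concretely, on the level of the surjection $\psi: X_\PP \twoheadrightarrow \cD$ of \Cref{1.7}, $\tilde{\rho}$ sends the class of $\varphi: \SS_\CC \lo \PP_\CC$ to the class of $\rho \circ \varphi : \SS_\CC \lo \PP'_\CC$, which is well-defined precisely because $\rho$ carries $\PP(\RR)\W(\CC)$-conjugacy classes to $\PP'(\RR)\W'(\CC)$-conjugacy classes.

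Next I would verify the $\Gamma$-equivariance of $\tilde\rho$. The action of an element $g \in \PP(\RR)$ on $X_\PP$ is by conjugation, $g \cdot \varphi = \intt(g) \circ \varphi$, and this action descends along $\psi$ to the natural action on $\cD$ used in \Cref{Hodge defi}. Since $\rho$ is a homomorphism of $\QQ$-algebraic groups, it intertwines the two conjugation actions, i.e.\ $\rho \circ \intt(g) \circ \varphi = \intt(\rho(g)) \circ (\rho \circ \varphi)$. Consequently
\begin{equation*}
\tilde{\rho}(\gamma \cdot x) = \rho(\gamma) \cdot \tilde{\rho}(x) \quad \text{for every } \gamma \in \PP(\QQ)^+,\; x \in \cD^+.
\end{equation*}

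Finally I would descend. If $[x_1] = [x_2]$ in $\Gamma \backslash \cD^+$, write $x_2 = \gamma x_1$ with $\gamma \in \Gamma$; by the previous display, $\tilde{\rho}(x_2) = \rho(\gamma) \tilde{\rho}(x_1)$, and $\rho(\gamma) \in \Gamma'$ by the hypothesis $\rho(\Gamma) \subset \Gamma'$. Hence $[\tilde{\rho}(x_1)] = [\tilde{\rho}(x_2)]$ in $\Gamma' \backslash \cD'^+$, so $[\rho]$ is well-defined and fits into the commutative square with vertical arrows the quotient maps $\pi: \cD^+ \lo \Gamma \backslash \cD^+$ and $\pi': \cD'^+ \lo \Gamma' \backslash \cD'^+$. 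After replacing $\Gamma$ (resp.\ $\Gamma'$) by a finite-index neat subgroup so that the action is free (as already done in \Cref{Hodge defi}), both $\pi$ and $\pi'$ are local biholomorphisms, and holomorphicity of $[\rho]$ follows from holomorphicity of $\tilde{\rho}$ established in the first step. There is no serious obstacle here: the proof is essentially tautological once Pink's description of the action on $\cD$ via conjugation on $X_\PP$ is made explicit, and the only point requiring care is the neatness reduction so that quotient maps are genuine covering maps rather than merely analytic quotients with finite isotropy.
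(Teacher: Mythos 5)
Your proposal is correct and is essentially the standard argument (the paper itself does not spell out a proof, merely deferring to \cite[3.4]{Pink89}, where the mixed Shimura case is proved this way). The three steps you identify—lift via \Cref{morphism Hodge data}, check $\Gamma$-equivariance via the intertwining of conjugation actions under $\rho$, then descend—are exactly right, and the observation that $\rho$ intertwines the $\PP(\RR)\W(\CC)$- and $\PP'(\RR)\W'(\CC)$-conjugation actions on $X_\PP$ and $X_{\PP'}$ is the crux.

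One small remark on the holomorphicity step: the neatness reduction you invoke is not actually needed and, as written, leaves a very minor loose end, since replacing $\Gamma$ by a neat finite-index subgroup $\Gamma_0$ changes the quotient and you would still need to descend once more along the finite branched cover $\Gamma_0 \backslash \cD^+ \lo \Gamma \backslash \cD^+$ (also checking $\rho(\Gamma_0) \subset \Gamma'_0$). The cleaner formulation is to use directly the universal property of the analytic quotient of a properly discontinuous action: the composite $\pi' \circ \tilde{\rho}: \cD^+ \lo \Gamma' \backslash \cD'^+$ is holomorphic and $\Gamma$-invariant by your equivariance computation, hence factors through a holomorphic map $[\rho]$ on the analytic space $\Gamma \backslash \cD^+$, with or without quotient singularities. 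This matches the generality of \Cref{Hodge defi}, which does allow finite isotropy in $\Gamma$.
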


\begin{defi}(mixed Hodge variety)
Let $(\PP, X_\PP)$ be a mixed Hodge datum and $K \subset \PP(\AAf)$ a compact open
subgroup of the finite adelic points of $\PP$. The mixed Hodge variety
$\Hod_K(\PP, X_\PP)$ is the complex analytic space $\PP(\QQ)
\backslash (\cD \times \PP(\AAf)/K)$. Here $\PP(\QQ)$ acts diagonally on $\cD$ and $\PP(\AAf)$, the group $K$ acts on
$\PP(\AAf)$ on the right, and $\PP(\AAf)$ is endowed with the adelic
topology.
\end{defi}

As in \cite[lemma 5.11, 5.12, 5.13]{Milne} one shows:
\begin{lem}
Fix $\cD^+$ a connected component of $\cD$.
Let $\cC$ be a (finite) set of representatives for the finite double coset
space $\PP(\QQ)^+ \backslash \PP(\AAf) /K$. Then we have a homeomorphism
$\Hod_K(\PP, X_\PP) = \bigsqcup_{g \in \cC} \Gamma_g \backslash \cD^+ \;,$
where $\Gamma_g$ is the congruence subgroup $g K g^{-1} \cap
\PP(\QQ)^+$ of $\PP(\QQ)^+$.

In particular the mixed Hodge variety $\Hod_K(\PP, X_\PP)$ is a finite union of connected mixed Hodge
varieties.
\end{lem}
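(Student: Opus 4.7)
The plan is to follow the classical template from the theory of Shimura varieties (as in the cited lemmas of Milne). The essential observation is that although $\PP(\QQ)$ permutes the connected components of $\cD$, only $\PP(\QQ)^+$ preserves $\cD^+$, and restricting the first factor to $\cD^+$ reduces the full double quotient to a quotient by $\PP(\QQ)^+$ alone.

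First I would show that the inclusion $\cD^+ \times \PP(\AAf) \hookrightarrow \cD \times \PP(\AAf)$ descends to a homeomorphism
\[
\PP(\QQ)^+ \backslash \bigl(\cD^+ \times \PP(\AAf)/K\bigr) \;\xrightarrow{\;\sim\;}\; \Hod_K(\PP, X_\PP).
\]
Surjectivity comes from the fact that the $\PP(\QQ)$-orbit of any point of $\cD$ meets $\cD^+$: indeed $\PP$ is connected, so $\PP(\RR)$ acts transitively on $\pi_0(\cD)$, and $\PP(\QQ)$ is dense in $\PP(\RR)$, so meets every $\PP(\RR)^+$-coset. Injectivity follows from the identity $\Stab_{\PP(\QQ)}(\cD^+) = \PP(\QQ)^+$.

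Next I would exploit the double coset decomposition $\PP(\AAf) = \bigsqcup_{g \in \cC} \PP(\QQ)^+ g K$ to identify the $\PP(\QQ)^+$-set $\PP(\AAf)/K$ with $\bigsqcup_{g \in \cC} \PP(\QQ)^+/\Gamma_g$, using that the stabilizer of $gK \in \PP(\AAf)/K$ under the left $\PP(\QQ)^+$-action is $gKg^{-1}\cap \PP(\QQ)^+ = \Gamma_g$. Substituting and using the elementary identification $H \backslash (X \times H/\Lambda) \cong \Lambda \backslash X$ for diagonal actions yields
\[
\Hod_K(\PP, X_\PP) \;\cong\; \bigsqcup_{g \in \cC} \Gamma_g \backslash \cD^+,
\]
and since each $\Gamma_g$ is a congruence subgroup of $\PP(\QQ)^+$ each piece is a connected mixed Hodge variety in the sense of \Cref{Hodge defi}.

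The main technical points to verify are (i) the finiteness of $\cC$ and (ii) that the constructed bijection is a homeomorphism rather than merely a set-theoretic identification. For (ii), one checks that for each $g$ the composition $\cD^+ \hookrightarrow \cD^+ \times \{gK\} \to \Hod_K(\PP, X_\PP)$ is open and continuous (using that $\PP(\AAf)/K$ is discrete and each $\Gamma_g$ acts properly discontinuously on $\cD^+$), so the induced map $\Gamma_g \backslash \cD^+ \to \Hod_K(\PP, X_\PP)$ is an open topological embedding. Point (i) is the only genuinely arithmetic input: it is a class-number type statement which reduces, via the exact sequence $1 \to \W \to \PP \to \PP/\W \to 1$, to finiteness of $(\PP/\W)(\QQ)^+ \backslash (\PP/\W)(\AAf)/\overline{K}$ in the reductive case (a theorem of Borel) plus finiteness on the unipotent side (where strong approximation applies). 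This is the main hurdle to verify, but is standard in the Shimura-variety literature and transposes verbatim to our more general setting.
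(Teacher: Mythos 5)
Your proof is correct and takes the same route the paper intends: the paper's proof consists only of the remark ``As in [Milne, lemmas 5.11--5.13] one shows,'' and you have simply written out that standard argument (restrict to one connected component, identify $\PP(\AAf)/K$ with $\bigsqcup_{g\in\cC}\PP(\QQ)^+/\Gamma_g$, collapse the diagonal quotient, and invoke Borel's finiteness together with strong approximation for the unipotent radical to get finiteness of $\cC$), adapted to the mixed case. No substantive difference from the paper's approach.
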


When $(\PP, X_\PP)$ is a mixed Hodge datum of Shimura type and $(\PP, \cX, h)$ is
the associated mixed Shimura datum as in \Cref{ShimType}, the mixed
Hodge variety $\Hod_K(\PP, X_\PP)$ coincide with the mixed Shimura
variety $M^K(\PP, \cX)(\CC)$ defined in \cite[3.1]{Pink89}. We will
say that $\Hod_K(\PP, X_\PP)$ is of Shimura type and denote it by
$\Sh_K(\PP, X_\PP)$.

Let $\cD^+$ be a connected component of $\cD$ and $\Ga \subset
\PP(\QQ)^+$ a torsion-free congruence subgroup. If $M$ is a representation of
$\PP$, the family $(\M, W_\bullet, F^\bullet)$ of mixed $\QQ$-Hodge structures on $\cD^+$ descends
to the connected mixed Hodge variety $\Gamma \backslash \cD^+$.
This defines a variation of mixed $\QQ$-Hodge structure
(i.e. satisfies Griffiths' transversality) if and only if
$\Gamma \backslash \cD^+$ is a connected mixed Shimura variety.

\subsubsection{Algebraicity}
In general the connected mixed Hodge variety $\Gamma \backslash \cD^+$
and the mixed Hodge variety $\Hod_K(\PP, X_\PP)$ are complex
analytic variety which do not admit any algebraic structure. If $(\PP,
X_\PP)$ is a pure Hodge datum one can
show (\cite{GRT}) that the pure Hodge variety $\Hod_K(\PP, X_\PP)$
admits an algebraic structure if and only if it fibers holomorphically
or antiholomorphically over a (connected) Shimura
variety $\Sh_K(\PP', X_{\PP'})$. A similar result should hold in the mixed
case.

\subsection{Special subvarieties of Hodge varieties}
Let $(\PP, X_\PP, \cD^+)$ be a connected mixed Hodge datum and $Y:=
\Gamma \backslash \cD^+$ an associated connected mixed Hodge
variety. Although a representation $M$ of $\PP$ does not in general
define a variation of mixed $\QQ$-Hodge structure on $Y$, we can still
define a notion of special subvariety of $Y$ in purely group
theoretical terms.

\begin{defi} 
The image of any Hodge morphism $T \lo Y$ between connected mixed
Hodge varieties is called a special subvariety of $Y$. It is said to
be of Shimura type if $T$ is a connected mixed Shimura variety.
\end{defi}

\begin{defi}
The Hodge locus $\HL(Y)$ of the connected mixed Hodge
variety $Y$ is the union of all special subvarieties of $Y$.
\end{defi}

\subsection{Period maps}
Let $(\PP, X_\PP)$ be a mixed Hodge datum, $\cD$ the associated mixed
Mumford-Tate domain, $K \subset \PP(\AAf)$ a neat compact open
subgroup and $M$ an algebraic
representation of $\PP$. 
In general, Griffiths' transversality for $(\M, W_\bullet, F^\bullet)$
is recovered by restricting ourselves to
{\em horizontal} subvarieties of  $\Hod_{K}(\PP, X_\PP)$, which are
defined as follows. The tangent
bundle $T\cD$ is naturally $\PP(\RR) \W(\CC)$-equivariant,
associated to the representation $F^{-\infty} {\p} /F^0 {\p}$. The
Hodge filtration on $\p$ defined by the mixed Hodge structure $\varphi_\CC : \SS_\CC \lo
{\PP}_\CC$ thus defines a natural $\PP(\RR) \W(\CC)$-equivariant 
filtration $F^\bullet$ on the holomorphic tangent bundle $T\cD$. 

\begin{defi} \label{horizontal tangent}
The horizontal tangent bundle $T_h \cD$ is defined to be the holomorphic
subbundle $F^{-1} T\cD$, associated to the representation
$\Gr^{-1}_F\p$.  By equivariance it descends to a holomorphic
subbundle $T_h \Hod_{K}(\PP, X_\PP) \subset T \Hod_{K}(\PP, X_\PP)$, called the 
horizontal tangent bundle of the mixed Hodge variety $\Hod_{K}(\PP,
X_\PP)$. One denotes by $T_h \Hod_\Ga^0(\PP, X_\PP)$ its restriction
to the connected mixed Hodge variety $\Hod_\Ga^0(\PP, X_\PP) = \Gamma
\backslash \cD^+$.
\end{defi} 

\begin{rem}
The equality $T_h \Hod_{K}(\PP, X_\PP) = T \Hod_{K}(\PP,
X_\PP)$ holds if and only if $(\PP, X_\PP)$ is of Shimura type,
i.e. if $\Hod_{K}(\PP, X_\PP)$ is a mixed Shimura variety. 
\end{rem}

\begin{defi} 
Let $S$ be a complex manifold. A holomorphic map $\Phi: S
\lo \Hod_{K}(\PP, X_\PP)$ is said to be horizontal if $\Phi_*(TS) \subset T_h
\Hod_{K}(\PP, X_\PP)$. 
\end{defi}

\begin{defi}
Let $S$ be a complex manifold. A holomorphic map $f: S \lo
\Hod_K(\PP, X_\PP)$ is said to be locally liftable horizontal if
for each point $s \in S$ there exists a neighbourhood $U_s$ of $s$ in
$S$ and a commutative diagram
$$
\xymatrix{
U_s \ar[r]^{\tilde{f}} \ar[dr]_{f} & \cD \ar[d] \\
& \Hod_K(\PP, X_\PP)}
$$
such that $\tilde{f}$ is horizontal.
\end{defi}

\begin{defi} \label{periodmap}
Let $(\PP, X_\PP)$ be a mixed Hodge datum, $K \subset \PP(\AAf)$ a compact open
subgroup and $\Hod_K(\PP, X_\PP)$ the associated Hodge variety. Let $S$
be a complex manifold. A map $$\Phi: S \lo \Hod_K(\PP, X_\PP)$$ is called
a period map if it is holomorphic, locally liftable and horizontal.
\end{defi} 

If $\Phi:S \lo \Hod_{K}(\PP, X_\PP)$ is a period map the
pullback $\Phi^*(\M, W_\bullet, F^\bullet)$ is a $\QQ$VMHS on $S$. Conversely suppose $\VV \lo S$ is a
$\ZZ$VMHS over a connected complex
manifold $S$.  Fix a Hodge generic base point $s \in S$, let $\PP_S:=
\PP_S(\VV)= \PP(\VV_s)$ be the generic Mumford-Tate group of $\VV$,
$\W_S$ its unipotent radical, $\p_S$ the Lie algebra of $\PP_S$ and
$X_S$ the $\PP_S(\RR) \W_S(\CC)$-conjugacy class of
$\varphi_s: \SS_\CC \lo \GL(\VV_{s,\CC})$. The pair $(\PP_S,
X_S)$ is a mixed Hodge datum. Let $\cD_S^+$ be the connected
component of $\cD_S:= \cD_{\PP_S, X_S}$ containing the image
$\psi(\varphi_s)$. The triple $(\PP_S, X_S, \cD^+_S)$ is a connected
mixed Hodge datum. Let $\Ga:=
\PP(\QQ) \cap \GL(\VV_{s,\ZZ})$. We define the connected mixed Hodge variety
$\Hod^0(S, \VV)$ as $\Hod^0_{\Ga}(\PP_S, X_S, \cD^+_S)$. 

The Hodge filtration for $\VV
\lo S$ defines a period map
\begin{equation} \label{PeriodMap}
 \Phi_S: S \lo \Hod^0(S, \VV)\;\;.
\end{equation}

Moreover the Kodaira-Spencer map $\ol{\nabla}: TS^{0} \lo \Gr^{-1}_F W_0 \, \End \VV$ is naturally
interpreted as the composed morphism of fiber bundles
\begin{equation*}
\xymatrix{
TS \ar[r]^>>>>{d\Phi_S} \ar@/_2pc/[rr]_{\ol{\nabla}} &\Phi_S^* \;T_h
\Hod^0(S, \VV) \ar@{^(->}[r] &\Gr^{-1}_F W_0 \,\End \VV}
\;\;.
\end{equation*}

\begin{rem}
By a classical theorem of Griffiths \cite[Theor.9.6]{Grif} one can,
enlarging $S$ if necessary, assume that $\Phi_S(S)$ is a closed complex
analytic subvariety of $\Hod^0(S, \VV)$. A long-standing conjecture of
Griffiths states that the closed complex analytic horizontal
subvariety $\Phi_S(S)$ should admit a canonical structure of quasi-projective algebraic 
variety. In the pure case we refer to the work of Sommese \cite{Som} for partial
results and to \cite{LS} and \cite{GGLR}, both of which announce the
full conjecture. 
\end{rem}

%%%%%%%%%%%%%%%%%%%%%%%%

\subsection{Special subvarieties and period maps}

The following proposition follows from the definitions.

\begin{prop} \label{IdentificationSpecial}
Let $\VV = (\VV_\ZZ, W_\bullet, F^\bullet)$ be a $\ZZ$VMHS  over
a smooth quasi-projective variety $S$ with associated period
map 
$$ \Phi_S: S \lo \Hod^0(S, \VV)\;\;.$$
Let $Z$ be an
irreducible subvariety of $S$. The following conditions are
equivalent:
\begin{itemize}
\item[(1)] $Z$ is a special subvariety for $(S, \VV)$.
\item[(2)] $Z$ is an irreducible component of the $\Phi_S$-preimage of
  a special subvariety of $\Hod^0(S, \VV)$.
\end{itemize}
Thus:
$$\HL(S, \VV) = \Phi_S^{-1} \left( \Phi_S(S) \cap \HL(\Hod^0(S, \VV)) \right)\;\;.$$
\end{prop}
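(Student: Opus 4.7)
My plan is to match three objects: (i) irreducible subvarieties $Z\subset S$ with a fixed generic Mumford-Tate group, (ii) sub-Hodge-data of $(\PP_S, X_S, \cD_S^+)$ modulo conjugation by $\Ga:= \PP_S(\QQ)\cap \GL(\VV_{s,\ZZ})$, and (iii) special subvarieties of the connected mixed Hodge variety $\Hod^0(S,\VV)=\Ga\bs\cD_S^+$. Once this dictionary is in place, the two conditions (1) and (2) correspond to the same data, and the formula for $\HL(S,\VV)$ follows by taking a union. The main structural input is that, up to $\Ga$-conjugation, a sub-Hodge-datum $(\PP', X_{\PP'},(\cD')^+)\hookrightarrow (\PP_S, X_S,\cD_S^+)$ is determined by the $\QQ$-subgroup $\PP'\subset \PP_S$, which is in turn read off as the generic Mumford-Tate group of the associated special subvariety.

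For $(1)\Rightarrow (2)$, suppose $Z\subset S$ is special with generic Mumford-Tate group $\PP_Z$. At a Hodge-generic $z\in Z$ one has $\MT_z(\VV)=\PP_Z$ and $\varphi_z:\SS_\CC\to \PP_{Z,\CC}\hookrightarrow \PP_{S,\CC}$. The $\PP_Z(\RR)\W_Z(\CC)$-orbit $X_{\PP_Z}$ of $\varphi_z$ and the inclusion $\PP_Z\hookrightarrow \PP_S$ form a morphism of mixed Hodge data; after choosing the component $\cD_Z^+$ of $\cD_{\PP_Z, X_{\PP_Z}}$ containing the appropriate lift of $\Phi_S(z)$ and setting $\Ga_Z:=\Ga\cap \PP_Z(\QQ)^+$, we obtain a Hodge morphism whose image $T_Z\subset \Hod^0(S,\VV)$ is a special subvariety, and clearly $\Phi_S(Z)\subset T_Z$. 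To upgrade $Z\subset \Phi_S^{-1}(T_Z)$ to an irreducible component, let $Z\subsetneq Z'$ with $Z'$ irreducible and $Z'\subset \Phi_S^{-1}(T_Z)$. The Mumford-Tate group at any point of $Z'$ is contained in a $\Ga$-conjugate of $\PP_Z$; combined with $Z\subset Z'$, this forces $\PP_{Z'}=\PP_Z$, and the maximality built into the definition of special subvariety yields $Z'=Z$.

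For $(2)\Rightarrow (1)$, let $T$ be special in $\Hod^0(S,\VV)$, arising from some Hodge morphism out of $\Hod^0_{\Ga'}(\PP', X_{\PP'},(\cD')^+)$, and let $Z$ be an irreducible component of $\Phi_S^{-1}(T)$ with generic Mumford-Tate group $\PP_Z$. At a Hodge-generic $z\in Z$ the inclusion $\Phi_S(z)\in T$ forces $\PP_Z$ into a $\Ga$-conjugate of $\PP'$. Construct $T_Z\subset T$ from $(\PP_Z, X_{\PP_Z})$ as in the previous paragraph; then $\Phi_S(Z)\subset T_Z$, so $Z\subset \Phi_S^{-1}(T_Z)\subset \Phi_S^{-1}(T)$, and since $Z$ is an irreducible component of the larger preimage it is a fortiori an irreducible component of $\Phi_S^{-1}(T_Z)$. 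To check $Z$ is special, take $Z\subset Z''$ irreducible with the same generic Mumford-Tate group $\PP_Z$: then $\Phi_S(Z'')\subset T_Z$, so $Z''\subset \Phi_S^{-1}(T_Z)$, hence $Z''=Z$. The displayed identity then follows by unions: by the equivalence just proved,
\[
\HL(S,\VV)=\bigcup_{T}\Phi_S^{-1}(T)=\Phi_S^{-1}\Bigl(\bigcup_T T\Bigr)=\Phi_S^{-1}\bigl(\HL(\Hod^0(S,\VV))\bigr),
\]
where $T$ ranges over special subvarieties of $\Hod^0(S,\VV)$; and the standard identity $\Phi_S^{-1}(X)=\Phi_S^{-1}(X\cap \Phi_S(S))$ yields the stated form.

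The main obstacle is bookkeeping rather than anything deep: one must simultaneously keep track of the Mumford-Tate group up to $\Ga$-conjugation (which is all that is visible downstairs on $\Ga\bs\cD_S^+$), of the choice of connected component, and of the congruence subgroup, while matching the maximality condition in the definition of special subvariety with the condition of being an irreducible component of a preimage. The key reduction is to observe that the $\Ga$-conjugacy ambiguity disappears once one works with the \emph{generic} Mumford-Tate group of a subvariety and the canonical special subvariety $T_Z$ it determines, at which point both directions of the equivalence reduce to a one-line comparison of maximality statements.
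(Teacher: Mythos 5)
The paper does not provide a proof of this proposition beyond the one-liner ``follows from the definitions,'' so your proof is essentially a worked-out version of what the paper treats as a tautology, and the dictionary you set up (subvarieties with a fixed generic Mumford--Tate group $\leftrightarrow$ sub-Hodge-data up to $\Gamma$-conjugation $\leftrightarrow$ special subvarieties of $\Hod^0(S,\VV)$) is exactly the intended content. Your two directions and the final identity are correct.

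The one place where you underplay the work is the repeated claim of the form ``$\Phi_S(Z)\subset T_Z$'' (and likewise ``$\Phi_S(Z'')\subset T_Z$'' for an irreducible $Z''\supset Z$ with the same generic Mumford--Tate group). It is not automatic from the pointwise fact that each $\varphi_z$ factors through $\PP_{Z,\CC}$ that the whole image stays in the \emph{same} component $\cD_Z^+$ and descends along the \emph{same} congruence subgroup $\Gamma_Z=\Gamma\cap\PP_Z(\QQ)^+$: one must control the monodromy. The needed input is the theorem of the fixed part / Andr\'e's monodromy theorem (\Cref{monod}), which guarantees that the algebraic monodromy group of $\VV|_{Z''}$ is contained in $\PP_{Z}^{\der}$, hence the monodromy representation of $\pi_1(Z'')$ lands in $\Gamma_Z$; together with connectedness of $Z''$ and openness of $\cD_Z^+$ in its compact dual, this pins the lifted period map of $Z''$ into $\cD_Z^+$ and gives $\Phi_S(Z'')\subset T_Z$. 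You flag this as ``bookkeeping rather than anything deep,'' which is fair, but it is the one ingredient beyond pure definitions, and it is worth naming explicitly. A second minor point is that in the identity $\HL(S,\VV)=\Phi_S^{-1}\bigl(\Phi_S(S)\cap\HL(\Hod^0(S,\VV))\bigr)$ one must read $\HL(\Hod^0(S,\VV))$ as the union of \emph{proper} special subvarieties (the definition in the paper literally includes the identity Hodge morphism); but since $\Phi_S(S)$ is Hodge-generic in $\Hod^0(S,\VV)$ by construction, no proper $T$ contains $\Phi_S(S)$, so intersecting with $\Phi_S(S)$ already excludes $S$ and your union computation goes through.
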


%%%%%%%%%%%%%%%%%%%%%%%%%%%%%%%%%%%%%%%%%%%%%%%%%%%%%%%%%%%%
\section{Atypicality and optimality in terms
of period maps} \label{geometric}

Let $\VV = (\VV_\ZZ, W_\bullet, F^\bullet)$ be a $\ZZ$VMHS  over
a smooth quasi-projective variety $S$ with associated period
map 
$$ \Phi_S: S \lo \Hod^0(S, \VV) \;\;.$$

\subsection{Hodge codimension and period maps}
Let us clarify geometrically the \Cref{Hcd} of Hodge codimension in terms of $\Phi_S$ and intersection theory. 

It follows from the description of $\cD_S$ that $$\dim_\CC \Gr^{-1}_F \p_S= \rk T_h \Hod^0(S, \VV)$$ and $$\rk \Imm
\ol{\nabla} = \dim_\CC \Phi_S(S) \;\;.$$
Hence: 

\begin{cor} \label{Hcd2}
The Hodge codimension $\Hcd(S, \VV)$ as defined in \Cref{Hcd} is
equal to the codimension of the tangent space at a
Hodge generic point of $\Phi_S(S)$ in the corresponding horizontal tangent space of $\Hod^0(S, \VV)$:
$$\Hcd(S, \VV) = \rk T_h \Hod^0(S, \VV)- \dim_\CC \Phi_S(S) \;\;.$$
\end{cor}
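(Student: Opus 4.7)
The proof is essentially an unpacking of definitions already in place, so the plan is to verify the two numerical equalities displayed just before the statement and then combine them with Definition 2.11.

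First, I would handle the identity $\dim_\CC \Gr^{-1}_F \p_S = \rk T_h \Hod^0(S,\VV)$. By Definition 3.13, the horizontal tangent bundle $T_h \cD_S^+$ is the $\PP_S(\RR)\W_S(\CC)$-equivariant holomorphic subbundle of $T\cD_S^+$ associated to the representation $\Gr^{-1}_F \p_S$ of the isotropy, and this construction descends to $\Hod^0(S,\VV) = \Gamma \backslash \cD_S^+$. Hence the rank of $T_h\Hod^0(S,\VV)$ as a holomorphic vector bundle equals $\dim_\CC \Gr^{-1}_F(\p_S\otimes_\QQ \CC)$. This is a direct bookkeeping step with no real content beyond matching representations.

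Next I would verify $\rk \Imm \ol{\nabla} = \dim_\CC \Phi_S(S)$. Using the commutative triangle displayed after equation (3.1),
\[
\xymatrix{
TS \ar[r]^>>>>{d\Phi_S} \ar@/_1.5pc/[rr]_{\ol{\nabla}} & \Phi_S^*\, T_h\Hod^0(S,\VV) \ar@{^{(}->}[r] & \Gr^{-1}_F W_0\,\End \VV,
}
\]
the map $\ol{\nabla}$ factors through the pullback of the horizontal tangent bundle by an inclusion of vector bundles. Consequently, at every point of $S$ the rank of $\ol{\nabla}$ equals the rank of $d\Phi_S$. By generic smoothness applied to the holomorphic map $\Phi_S: S \to \Hod^0(S,\VV)$, the generic rank of $d\Phi_S$ equals $\dim_\CC \Phi_S(S)$; one then restricts to the open dense subset of $S$ where this generic rank is attained (and which, without loss of generality, is contained in the Hodge generic locus used to define $\PP_S$). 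The relevant invariant $\rk \Imm \ol{\nabla}$ in Definition 2.11 is taken in this generic sense, so the equality follows.

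Subtracting these two identities and invoking Definition 2.11 yields
\[
\Hcd(S,\VV) = \dim_\CC \Gr^{-1}_F(\p_S\otimes_\QQ\CC) - \rk \Imm \ol{\nabla} = \rk T_h \Hod^0(S,\VV) - \dim_\CC \Phi_S(S),
\]
which is the claimed corollary. The only step that requires a moment of care is the second one: one must remember that ``rank of the Kodaira--Spencer map'' means the generic rank of $\ol{\nabla}$, so that it correctly computes $\dim_\CC \Phi_S(S)$ rather than some pointwise rank; this is conceptually trivial but worth stating explicitly, and I expect it to be the only point where a reader might pause.
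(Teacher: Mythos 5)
Your proof is correct and follows essentially the same route as the paper, which simply asserts the two identities $\dim_\CC \Gr^{-1}_F \p_S = \rk T_h \Hod^0(S,\VV)$ and $\rk\,\Imm\,\ol{\nabla} = \dim_\CC \Phi_S(S)$ as direct consequences of the description of $\cD_S$ and then subtracts. You have merely unpacked those two steps more carefully; in particular your observation that $\rk\,\Imm\,\ol{\nabla}$ must be read as the \emph{generic} rank is a worthwhile clarification that the paper leaves implicit.
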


In other words, the Hodge codimension $\Hcd(S, \VV)$ of $(S, \VV)$ is the natural codimension of
$\Phi_S(S)$ in $\Hod^0(S, \VV)$ once the Griffiths' transversality condition is taken into account.

\begin{rem}
In the simple case where $S$ is a closed algebraic subvariety of a connected mixed Shimura variety $\Sh^0_K(\PP, X)$, and $\VV$
is the restriction to $S$ of a variation of mixed Hodge
structure on $\Sh^0_K(\PP, X)$ associated to an algebraic representation
of $\PP$, then $\Hcd(S, \VV)$ coincides with
what Pink in \cite{Pink05b} calls the {\em defect} of $S$ in $\Sh$: the
codimension of $S$ in its {\em special closure} (i.e. the smallest special subvariety of $\Sh$
containing $S$).
\end{rem} 

\subsection{Atypicality and period maps}
It follows that the atypicality condition~(\ref{atypicality condition}) is
better understood in terms of the period map $\Phi_S: S \lo
\Hod^0(S, \VV)$ and intersection
theory. Let $Y \subset S$ be an irreducible algebraic subvariety. Let $\PP_S$ and $\PP_Y$ be the generic
Mumford-Tate groups of $\VV$ and $\VV_{Y^{\sm}}$ respectively, with Lie
algebras $\p_S$ and $\p_Y$ and Mumford-Tate domain $\cD_S$ and
$\cD_Y$. The restriction ${\Phi_S}_{|Y}$ of the period map $\Phi_S$ to $Y$
factorises uniquely as 
$$ \xymatrix{
Y \ar[r]^>>>>>{\Phi_{Y}} \ar@/_2pc/[rr]_{{\Phi_{S}}_{|Y}} &
\Hod^0(Y,\VV_{Y^{\sm}} ) \ar@{^(->}[r]^>>>>{ i_{Y,S}} & \Hod^0(S, \VV)}
\;\;.
$$

We thus obtain the following definition, equivalent to
\Cref{atypical}:
\begin{defi}  \label{atypical2}
Let $S$ be an irreducible smooth quasi-projective variety and $\VV \lo
S$ a $\QQ$VMHS on $S$.

A subvariety $Y \subset S$ is said to be atypical for
$(S, \VV)$ if it is irreducible and 
\begin{equation} \label{atyp2}
\dim_\CC \Phi_S(S) -\dim_\CC \Phi_S(Y) < \rk T_h \Hod^0(S, \VV) - \rk T_h\Hod^0(Y,\VV_{Y^{\sm}} ) \;\;.
\end{equation}
I.e., an atypical subvariety of $(S, \VV)$ is an irreducible subvariety $Y
\subset S$ such that the subvariety $\Phi_S(Y)$ of $\Phi_S(S)$ has
an excess intersection with the Hodge locus $\HL(\Hod^0(S, \VV))
\subset \Hod^0(S, \VV)$.
\end{defi}

\begin{rem} \label{rem0}
If $Y \subset S$ is Hodge generic in $S$ the atypicality
condition~(\ref{atyp2}) is
equivalent to $\dim_\CC \Phi_S(Y) > \dim_\CC \Phi_S(S)$ hence is never
satisfied. Thus a subvariety $Y \subset S$ atypical for $\VV$ is
always contained in the Hodge locus $\HL(S,
\VV)$.
\end{rem} 

\begin{rem} \label{rem2}
It follows from \Cref{rem1} and \Cref{atypical} that if $Y \subset Y' \subset S$ if a
pair of irreducible subvarieties, if $\PP_Y= \PP_{Y'}$ and if $Y$ is
atypical for $(S, \VV)$ then $Y'$ is atypical for $(S, \VV)$.
\end{rem}

\begin{rem} \label{rem1.1}
If follows immediately from \Cref{atypical2} that any $Y \subset S$
atypical for $(S, \VV)$ is contained in a unique subvariety of $S$
atypical for $(S, \VV)$, having $\PP_Y$ as generic Mumford-Tate group,
and maximal for these properties: the
irreducible component of $\Phi_S^{-1}(\Phi_S(S) \cap \Hod^0(Y, \VV_{Y^\sm}))$ containing $Y$. In particular maximal atypical
subvarieties of $S$ are special subvarieties of $S$.
\end{rem}

\begin{rem} \label{rem1.2}
As the set of special subvarieties for $(\PP_S, X_S)$ is countable, the set
of maximal atypical subvarieties for $(S, \VV)$ is countable. In
particular it follows from \Cref{rem1.1} that $S_\atyp(\VV) \subset \HL(S,\VV) \subset S$ is also a countable union
of irreducible algebraic subvarieties of $S$.
\end{rem}

\begin{rem} \label{rem1.3}
If $f:S \lo S'$ is a surjective morphism of
smooth quasi-projective varieties and the $\ZZ$VMHS $\VV$ on $S$ is the pull-back $f^*\VV'$ of some $\ZZ$VMHS $\VV'$ on $S'$
then $S_{\atyp}(\VV) = f^{-1} S'_{\atyp}(\VV')$.
\end{rem}

\subsection{Optimality and period maps}
Similarly, the optimality condition is better understood in terms of
period maps: an irreducible subvariety $Y \subset S$ is optimal for
$(S, \VV)$ if it is not strictly contained in a better approximation to a special
subvariety of $\Hod^0(S, \VV)$.

\subsection{Examples}

\begin{Example} \label{ex1}
Let $S$ be a Shimura variety and $\VV$ a standard variation of Hodge
structure on $S$. Then $Y \subset S$ is atypical if and only if
$\dim_\CC Y > \rk T_h\Hod^0(Y, \VV_{|Y^\sm})$, which is impossible. Hence $S_\atyp =\emptyset$.
\end{Example}

\begin{Example} (Atypical special subvarieties of Shimura type with
  dominant period map) \label{ex2}
\begin{lem} \label{atypical of Shimura type}
Let $\VV \lo S$ be a $\ZZ$VMHS over a
smooth quasi-projective variety $S$. Let $Y \subset S$ be a
special subvariety for $(S, \VV)$, of Shimura type and such that the
period map
$\Phi_Y:Y \lo \Sh^0(Y)$ is dominant. Then $Y$ is not atypical for $(S, \VV)$ if
and only if $(\PP_S, \cD_S)$ is of Shimura type, the period map $\Phi_S: S \lo
\Sh^0(S, \VV)$ is dominant, and $\VV$ is the restriction to $S$ of a standard
variation of mixed Hodge structure on $\Sh^0(S, \VV)$.
\end{lem}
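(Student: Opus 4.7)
The plan is to reduce the non-atypicality of $Y$ to the vanishing of $\Hcd(S,\VV)$, and then interpret this vanishing as a Frobenius integrability condition forcing $(\PP_S,X_S)$ to be of Shimura type. Using the geometric reformulation \Cref{Hcd2} of Hodge codimension, the hypotheses on $Y$ imply $\Hcd(Y,\VV_{|Y^\sm})=0$: being of Shimura type gives $\rk T_h\Hod^0(Y,\VV_{|Y^\sm})=\dim_\CC\Hod^0(Y,\VV_{|Y^\sm})$ via the remark after \Cref{horizontal tangent}, and dominance of $\Phi_Y$ gives $\dim_\CC\Phi_Y(Y)=\dim_\CC\Hod^0(Y,\VV_{|Y^\sm})$. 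Since horizontality of $\Phi_S$ forces $\Hcd(S,\VV)\geq 0$ in general, \Cref{atypical} then tells us that $Y$ is not atypical in $(S,\VV)$ if and only if $\Hcd(S,\VV)=0$, i.e., $\dim_\CC\Phi_S(S)=\rk T_h\Hod^0(S,\VV)$.

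The ``if'' direction of the lemma is immediate: if $(\PP_S,X_S)$ is of Shimura type, the remark after \Cref{horizontal tangent} gives $T_h\Hod^0(S,\VV)=T\Hod^0(S,\VV)$, and dominance of $\Phi_S$ yields $\dim_\CC\Phi_S(S)=\dim_\CC\Hod^0(S,\VV)=\rk T_h\Hod^0(S,\VV)$. For the converse, assume $\dim_\CC\Phi_S(S)=\rk T_h\Hod^0(S,\VV)$. At a smooth point $p=\Phi_S(s_0)$ with $s_0\in S$ Hodge-generic, horizontality of $\Phi_S$ forces $T_p\Phi_S(S)\subset(T_h\Hod^0(S,\VV))_p$, and the dimension equality upgrades this inclusion to $T_p\Phi_S(S)=(T_h\Hod^0(S,\VV))_p$. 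Hence $\Phi_S(S)$ is, near $p$, an integral submanifold of the horizontal distribution of maximal possible rank, so by Frobenius the horizontal distribution is involutive at $p$. By the $\PP_S(\RR)\W_S(\CC)$-equivariance of this distribution on $\cD_S$ together with transitivity of the action (since $X_{\PP_S}$ is a single $\PP_S(\RR)\W_S(\CC)$-conjugacy class, cf.\ \Cref{1.7}), involutivity propagates to all of $\cD_S$. Unpacking $[F^{-1}\p_{S,\CC},F^{-1}\p_{S,\CC}]\subset F^{-1}\p_{S,\CC}$ through the Deligne bigrading on $\p_S$, and using the minimality of $\p_S$ as the generic Mumford-Tate Lie algebra at $s_0$ together with the reality constraint on $\Gr^W\p_S$, one deduces that $\p_S$ is concentrated in Hodge types $(p,q)$ with $|p|,|q|\leq 1$---precisely the Shimura-type condition of \Cref{1.10}. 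Hence $(\PP_S,X_S)$ is of Shimura type, $\rk T_h=\dim_\CC\Sh^0(S,\VV)$, and $\Phi_S$ is dominant. Finally, $\VV$ is tautologically the pullback under $\Phi_S$ of the standard variation on $\Sh^0(S,\VV)$ by construction of $\Hod^0(S,\VV)$ in \Cref{VHS}.

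The main obstacle is the last step of the ``only if'' direction, namely deducing the full Hodge-type constraint of Shimura type on $\p_S$ from mere involutivity of the horizontal distribution. Involutivity alone yields only $[\p_{S,\CC}^{-1,*},\p_{S,\CC}^{-1,*}]=0$ inside $\p_{S,\CC}^{-2,*}$, which does not immediately force $\p_{S,\CC}^{-2,*}=0$; one must additionally exploit that $\p_S$ is the \emph{generic} Mumford-Tate Lie algebra, so that any ``extra'' Hodge types unreachable via horizontal variation and brackets would contradict the minimality of the generic Mumford-Tate group. A secondary technical point is the transitivity of the $\PP_S(\RR)\W_S(\CC)$-action on $\cD_S$, needed to propagate involutivity globally from a single point; this follows from the explicit surjection $\psi:X_{\PP_S}\twoheadrightarrow\cD_S$ of \Cref{1.7}.
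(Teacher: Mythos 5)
Your proposal follows the paper's route closely: both reduce non-atypicality of $Y$ to the equality $\dim_\CC\Phi_S(S)=\rk T_h\Hod^0(S,\VV)$, both read this as saying $\Phi_S(S)$ is an integral submanifold of the horizontal distribution of maximal rank, both infer involutivity of $T_h$ at one point and then use $\PP_S(\RR)\W_S(\CC)$-homogeneity to propagate it over $\cD_S$ (a point you make explicit, which the paper does not), and both must then translate involutivity of $T_h\cD_S$ into the Hodge-type constraint on $\p_S$ of \Cref{1.10}. The ``if'' direction and the final tautological identification of $\VV$ as a pullback are also as in the paper.

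The step you flag as a gap is indeed where the argument is not finished in your write-up, but the repair you suggest is not the right one. You are correct that involutivity gives exactly $[\p_{S,\CC}^{-1,*},\p_{S,\CC}^{-1,*}]=0$ and that this does not \emph{formally} force $\p_{S,\CC}^{-2,*}=0$. However, appealing to minimality of the generic Mumford--Tate group is off-target: involutivity is a purely Lie-algebraic condition on $\p_S$, and what it buys you is governed by the graded structure of $\p_{S,\CC}$, not by genericity. The paper's ``standard Lie theoretic considerations'' is a pointer to the following fact: for a polarizable pure Hodge structure with Mumford--Tate Lie algebra $\p$, the subalgebra $F^0\p_\CC$ is \emph{parabolic} in $\p_\CC$, so the nilradical $\bigoplus_{k\leq -1}\p_\CC^{k,-k}$ of the opposite parabolic is generated in degree $-1$, i.e.\ $\p_\CC^{-k,k}=[\p_\CC^{-1,1},\p_\CC^{-(k-1),k-1}]$ for $k\geq 2$. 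Hence $[\p_\CC^{-1,1},\p_\CC^{-1,1}]=0$ kills $\p^{-2,2}_\CC$ and, inductively, all $\p^{-k,k}_\CC$ for $k\geq 2$; this is precisely the Shimura-type condition of \Cref{1.10} in the pure case. The mixed case is then obtained, as the paper indicates, by a devissage along the weight filtration. So the structure of your argument is right, but the last step should be closed with the parabolic/bracket-generation argument rather than with a (not carried out, and not obviously sufficient) appeal to minimality of $\p_S$.
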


\begin{proof}
Let $\Phi_S: S \lo \Hod^0(S, \VV)$ be the period map for $(S,
\VV)$. As $Y$ is special of Shimura type with dominant period map, $\rk T_h\Hod^0(Y, \VV_{Y^\sm})= \dim_\CC
\Phi_S(Y)$ and the atypicality condition~(\ref{atyp2}) for $Y$ reads
as: 
\begin{equation} \label{CM}
\dim_\CC \Phi_S(S)  < \rk T_h \Hod^0(S, \VV) \;\;.
\end{equation}

The variety $\Phi_S(S)$ is an horizontal subvariety of $\Hod^0(S, \VV)$ hence the inequality $\dim_\CC \Phi_S(S)  \leq \rk T_h
\Hod^0(S, \VV) $ always holds. It follows that the special
subvariety $Y$ is not atypical for $(S, \VV)$ if and only if $\dim_\CC \Phi_S(S)
= \rk T_h \Hod^0(S, \VV)$. This implies that the horizontal
distribution $T_h \Hod^0(S, \VV)$ of $T\Hod^0(S, \VV)$ is
integrable. 

Suppose first that $\VV$ is pure. In that case standard Lie theoretic considerations imply that $\cD_S^+$ is a Hermitian
symmetric domain. The existence of one algebraic leaf (namely $S$) for
the foliation on $\Hod^0(S, \VV)$ defined by
$T_h \Hod^0(S, \VV)$ implies that $T_h
\Hod^0(S, \VV) = T \Hod^0(S, \VV)$ (i.e. $\VV$ is of unconstrained
type in the terminology of \cite[p.12]{GGK}). Thus $\VV$ comes from a
$\ZZ$VMHS on $\Hod^0(S, \VV)$ hence $(\PP_S,
X_S)$ is of Shimura type by \Cref{1.10}. The same conclusion holds in
the mixed case by a classical devissage to the pure case.

The facts that $\Phi_S: S \lo
\Sh^0(S, \VV)$ is dominant and $\VV$ is the restriction to $S$ of a standard
variation of mixed Hodge structure on $\Sh^0(S, \VV)$ are now clear.
\end{proof}

Notice the following special case of \Cref{atypical of Shimura type}:
\begin{lem} \label{atypical CM point}
Let $\VV \lo S$ be a $\ZZ$VMHS over a
smooth quasi-projective variety $S$. Let $x \in S$ be a
CM-point. Then $x$ is not atypical for $(S, \VV)$ if
and only if $(\PP_S, X_S)$ is of Shimura type and the period map $\Phi_S: S \lo
\Sh^0(S, \VV)$ is dominant.
\end{lem}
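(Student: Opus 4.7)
The plan is to derive this as an immediate consequence of the previous \Cref{atypical of Shimura type} applied to the special subvariety $Y = \{x\} \subset S$. By definition of a CM-point, $\{x\}$ is a zero-dimensional special subvariety of $(S, \VV)$ whose generic Mumford-Tate group $\PP_{\{x\}} = \MT_x(\VV)$ is a torus. All that remains is to verify the two hypotheses of the preceding lemma: that $\{x\}$ is special of Shimura type, and that its associated period map is dominant.

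For the Shimura-type property, I would invoke \Cref{1.10}. Since $\PP_{\{x\}}$ is a torus, it is commutative, so the adjoint action $\Ad_{\PP_{\{x\}}}$ on its Lie algebra $\p_{\{x\}}$ is trivial. Consequently the mixed Hodge structure on $\p_{\{x\}}$ defined by $\Ad \circ \varphi_\CC$ is trivial of type $\{(0,0)\}$, which is visibly contained in $\{(-1,1), (0,0), (1,-1)\}$. Hence $(\PP_{\{x\}}, X_{\{x\}})$ is of Shimura type. Moreover, since the torus $\PP_{\{x\}}$ is reductive (so has trivial unipotent radical) and commutative, the $\PP_{\{x\}}(\RR)\W_{\{x\}}(\CC)$-conjugacy class $X_{\{x\}}$ reduces to the single element $\varphi_x$, so $\cD_{\{x\}}$ is a point and the period map $\Phi_{\{x\}} : \{x\} \lo \Hod^0(\{x\}, \VV_x)$ is trivially dominant.

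Applying \Cref{atypical of Shimura type} then yields that $\{x\}$ is not atypical for $(S, \VV)$ if and only if $(\PP_S, X_S)$ is of Shimura type, $\Phi_S : S \lo \Sh^0(S, \VV)$ is dominant, and $\VV$ is the restriction of a standard VMHS on $\Sh^0(S, \VV)$. The third condition is however automatic once the first holds: by \Cref{1.10}, when $(\PP_S, X_S)$ is of Shimura type, the tautological family $\M$ on $\cD_S$ descends to a VMHS on $\Hod^0(S, \VV) = \Sh^0(S, \VV)$ whose $\Phi_S$-pullback is $\VV$ by construction. This gives the two-condition characterisation claimed in the statement.

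I do not anticipate any real obstacle; the only subtlety is the observation that the third condition in \Cref{atypical of Shimura type} is redundant given the first, and the minor point that the Mumford-Tate domain attached to a torus is a single point, which ensures dominance of $\Phi_{\{x\}}$ without any further argument.
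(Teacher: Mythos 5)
Your proposal is correct and follows exactly the route the paper intends: the paper introduces Lemma~\ref{atypical CM point} with the phrase ``Notice the following special case of \Cref{atypical of Shimura type}'' and gives no further argument, so the intended proof is precisely to apply Lemma~\ref{atypical of Shimura type} to $Y=\{x\}$. Your verification that a CM-point is a zero-dimensional special subvariety of Shimura type with trivially dominant period map (the torus case of \Cref{1.10}, $\cD_{\{x\}}$ a single point), and your observation that the third conclusion of Lemma~\ref{atypical of Shimura type} is subsumed by the first, simply make explicit what the paper leaves to the reader.
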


\end{Example}

%%%%%%%%%%%%%%%%%%%%%%%%%%%%%%%%%%%%%%%%%%%%%%%%%%%%%%
\section{On the main conjecture and its corollaries} \label{On main}

\begin{prop} \label{equivalence}
\Cref{main conjecture1}, \Cref{main conjecture2}, \Cref{main
  conjecture3} and \Cref{main conjecture4} are equivalent.
\end{prop}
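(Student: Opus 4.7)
The plan is to establish the cycle $(1) \Rightarrow (2) \Rightarrow (3) \Rightarrow (1)$, together with the separate equivalence $(1) \Leftrightarrow (4)$. The first two implications are essentially formal: special subvarieties are closed algebraic by Cattani--Deligne--Kaplan and its mixed extension (recalled in \Cref{VHS}), so any finite union of them is a closed algebraic subset; the inclusion $S_\atyp(\VV) \subseteq S$ is strict because $S$ itself is never atypical (the condition $\Hcd(S,\VV) < \Hcd(S,\VV)$ cannot hold), and a strict closed algebraic subset is plainly not Zariski-dense. The main content lies in $(3) \Rightarrow (1)$.

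For $(3) \Rightarrow (1)$, I assume (3) holds universally. By \Cref{rem1.2}, write $S_\atyp(\VV) = \bigcup_{i \in I} Y_i$ as the countable union of its maximal atypical subvarieties, each of which is a special subvariety by \Cref{rem1.1}. Applying (3) to $(S,\VV)$, the Zariski closure $Z := \overline{S_\atyp(\VV)}^{\Zar}$ is a proper algebraic subset of $S$; decompose it into finitely many irreducible components $Z_1, \ldots, Z_k$. It suffices to show each $Z_j$ contains only finitely many $Y_i$'s. Suppose for contradiction some $Z_0 := Z_j$ contains infinitely many of them; since $Z_0$ is an irreducible component of the Zariski closure of $\bigcup_i Y_i$, the $Y_i$'s lying in $Z_0$ must be Zariski-dense in $Z_0$. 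The variety $Z_0$ itself cannot be atypical for $(S,\VV)$: otherwise, by \Cref{rem1.1}, $Z_0$ would be contained in a unique maximal atypical $Y_{i_0}$, and every $Y_i \subset Z_0 \subset Y_{i_0}$ would coincide with $Y_{i_0}$ by the maximality of $Y_i$, contradicting infiniteness. Therefore $\Hcd(Z_0, \VV_{|Z_0^\sm}) \geq \Hcd(S,\VV)$, and for every $Y_i \subset Z_0$
$$\Hcd(Y_i, \VV_{|Y_i^\sm}) < \Hcd(S,\VV) \leq \Hcd(Z_0, \VV_{|Z_0^\sm}),$$
so each such $Y_i$ is atypical for $(Z_0^\sm, \VV_{|Z_0^\sm})$. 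Now apply (3) to the irreducible smooth quasi-projective variety $Z_0^\sm$ carrying the restricted admissible graded-polarized VMHS: $(Z_0^\sm)_\atyp(\VV_{|Z_0^\sm})$ is not Zariski-dense in $Z_0^\sm$, yet it contains $\bigcup_{i : Y_i \subset Z_0}(Y_i \cap Z_0^\sm)$, which is Zariski-dense in $Z_0^\sm$ by openness of $Z_0^\sm$ in $Z_0$. Contradiction.

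The equivalence $(1) \Leftrightarrow (4)$ is more direct. For $(4) \Rightarrow (1)$: every maximal atypical subvariety is optimal (noted right after \Cref{optimal}), so (4) forces finitely many maximal atypical subvarieties; by \Cref{rem1.1}, $S_\atyp(\VV) = \bigcup_{\text{max.\ atyp.}} Y$ is then a finite union of special subvarieties. For $(1) \Rightarrow (4)$ I induct on $\dim S$, assuming (1) universally. Given (1), write $S_\atyp(\VV) = Y_1 \cup \cdots \cup Y_k$; any proper optimal $Y \subsetneq S$ is atypical, hence lies in some $Y_j$. If $Y = Y_j$, there are at most $k$ possibilities. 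If $Y \subsetneq Y_j$, then optimality for $(S,\VV)$ restricts directly to optimality for $(Y_j^\sm, \VV_{|Y_j^\sm})$ (any intermediate $Y \cap Y_j^\sm \subsetneq Y'' \subset Y_j^\sm$ has closure $Y' := \overline{Y''} \subset Y_j \subset S$ strictly containing $Y$, so $\Hcd(Y) < \Hcd(Y')$ by optimality in $S$, and $\Hcd$ is preserved under passing to dense open subsets), and the inductive hypothesis applied to $(Y_j^\sm, \VV_{|Y_j^\sm})$ of strictly smaller dimension bounds such $Y$'s finitely. The main technical subtlety, and the step deserving the most care, is the dichotomy at the heart of $(3) \Rightarrow (1)$: the implication ``$Z_0$ not atypical'' $\Rightarrow$ $\Hcd(Z_0, \VV_{|Z_0^\sm}) \geq \Hcd(S,\VV)$ is what converts the Zariski-density of the $Y_i$'s in $Z_0$ into atypical density inside $(Z_0^\sm, \VV_{|Z_0^\sm})$ and unlocks the contradiction from (3) applied at $Z_0^\sm$.
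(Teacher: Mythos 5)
Your proof is correct and follows the same overall strategy as the paper: establish $(1)\Rightarrow(2)\Rightarrow(3)\Rightarrow(1)$ together with $(1)\Leftrightarrow(4)$, the last by induction on $\dim S$ exactly as in the paper. The $(3)\Rightarrow(1)$ step is organized slightly differently, though. The paper applies \Cref{main conjecture3} to $Y^\sm$ (with $Y$ a component of $\overline{S_\atyp(\VV)}$) to produce an $X_i$ that is \emph{not} atypical for $(Y,\VV_{|Y})$, reads off $\Hcd(Y)\leq\Hcd(X_i)<\Hcd(S)$, hence that $Y$ is atypical, and then invokes maximality once. You instead run a case split: if $Z_0$ is atypical you get a direct contradiction from the maximality of the $Y_i$'s, and if $Z_0$ is not atypical you get a density contradiction with \Cref{main conjecture3} applied to $Z_0^\sm$. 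Both routes appeal to \Cref{main conjecture3} at the same spot and are logically equivalent, so this is essentially the paper's argument in inverted order rather than a genuinely different proof. Two small points worth flagging. First, your citation of \Cref{rem1.1} overstates what it says: that remark produces a maximal atypical subvariety among those with the \emph{same} generic Mumford--Tate group as $Z_0$, not a unique maximal atypical subvariety of $S$ containing $Z_0$. The argument is easily repaired since you only need the existence of \emph{some} maximal atypical subvariety containing $Z_0$ (which follows from the Noetherian ascending chain condition), together with the observation that any maximal atypical $Y_i\subset Z_0$ contained in such a subvariety must equal it. Second, in $(1)\Rightarrow(2)$ the cleanest justification that $S_\atyp(\VV)\subsetneq S$ is that each of the finitely many special subvarieties in the union is a proper closed subvariety (its generic Mumford--Tate group is strictly contained in $\PP_S$), so their finite union in the irreducible $S$ is proper; "$S$ is not atypical'' alone does not immediately rule out the finite union exhausting $S$.
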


\begin{proof}
Obviously \Cref{main conjecture1} implies \Cref{main conjecture2},
which in turns implies \Cref{main conjecture3}. Let us show that
\Cref{main conjecture3} implies \Cref{main conjecture1}.
So let $S$ be an irreducible smooth quasi-projective variety and $\VV
\lo S$ a $\ZZ$VMHS on $S$. Let $Y \subset S$ be an
irreducible component of the Zariski-closure of
$S_{\atyp}(\VV)$. It follows from \Cref{rem1.1} that $Y$ is the
Zariski-closure of the union $\bigcup_{i \in I} X_i$, where $I$ is a
countable set and the $X_i$ are pairwise distinct maximal atypical special subvarieties for
$(S, \VV)$. There exists an index $i \in I$ such that the variety
$X_i$ is not atypical for $(Y, \VV_{|Y})$: otherwise $Y_\atyp(\VV_{|Y})$ would be Zariski-dense
in $Y$, contradicting \Cref{main conjecture3}. Then
\begin{equation} \label{not atyp}
\Hcd(Y, \VV_{|Y^\sm}) = \Hcd(X_{i}, \VV_{|X_{i}^\sm}) < \Hcd(S, \VV),
\end{equation}
hence $Y$ is atypical in $S$. By maximality of the $X_i$'s, it follows
that the set $I$ contains only one element $\{i\}$. Thus $Y = X_i$ is a
special subvariety of $(S, \VV)$ and the result.

It remains to show that \Cref{main conjecture1} and \Cref{main
  conjecture4} are equivalent. Assume that \Cref{main conjecture4}
holds true. As maximal atypical subvarieties for $(S, \VV)$ are
optimal for $(S, \VV)$ \Cref{main conjecture1} follows. Conversely assume that \Cref{main conjecture1}
holds true and let us prove \Cref{main conjecture4} by induction on $s
= \dim_\CC S$. For $s=1$ the variety $S$ is a curve, optimal
subvarieties of $S$ are points and coincide with (maximal) atypical subvarieties of
$S$, hence the result in this case. Suppose \Cref{main conjecture4}
holds true for varieties of dimension at most $s-1$ and suppose $S$ is
of dimension $s$. Let $Z_1, \cdots, Z_n$ be the finite collection of
maximal atypical subvarieties for $(S, \VV)$. Let $(Y_i)_{i \in I}$ be a set of pairwise distinct optimal
subvarieties for $(S, \VV)$. For each $i \in I$ the optimal subvariety $Y_i
\subset S$ is atypical, hence there exists $\phi(i)  \in \{1, \cdots ,
n\}$ such that $Y_i \subset Z_{\phi(i)}$. Moreover by the very
definition of optimality either $Y_i = Z_{\phi(i)}$ or $Y_i$ is
optimal for $(Z_{\phi(i)}, \VV_{|Z_{\phi(i)}^\sm})$. As $\dim_\CC
Z_{\phi(i)} <s$ it follows by the induction hypothesis that the set
$I$ is finite. Hence the result.
\end{proof}

\subsection{\Cref{main conjecture1} as a generalized Zilber-Pink
  conjecture}

Notice that the restriction of \Cref{main conjecture1} to the class of
pairs $(S, \VV)$ where $S$ is a subvariety of a Shimura variety
$\Sh^0(S, \VV)$ and $\VV$ is the restriction to $S$
of a standard $\ZZ$VMHS on $\Sh^0(S, \VV)$ is the
Zilber-Pink conjecture as stated for example by Pila
(\cite[Conj.2.3]{Pila}) (this conjecture was formulated by Zilber
\cite{Zil02} in the case of multiplicative groups, it is a stronger version of the original
Pink Conjecture \cite[Conj.1.1]{Pink05b}).

\subsection{\Cref{main conjecture1} and CM-points: a generalized
  Andr\'e-Oort conjecture}

\Cref{main conjecture1} immediately implies:

\begin{conj}[Andr\'e-Oort conjecture for $\ZZ$VHS, version 1] \label{conj1}

Let $\VV \lo S$ be a $\ZZ$VMHS over a
smooth irreducible quasi-projective variety $S$.
Suppose that the union $\SL(S, \VV)$ of special subvarieties for $(S, \VV)$, which
are of Shimura type with dominant period map, is Zariski-dense in
$S$. Then the Hodge datum $(\PP_S, X_S)$ is of Shimura type and the
period map $ \Phi_S: S \lo \Sh^0(S, \VV)$
is dominant.
\end{conj}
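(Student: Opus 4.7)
\Cref{conj1} is advertised as an immediate consequence of \Cref{main conjecture1}, and the plan is to deduce it from any equivalent form of the main conjecture (granted by \Cref{equivalence}), using \Cref{atypical of Shimura type} as the bridge. I proceed by contraposition: assume that either $(\PP_S, X_S)$ is not of Shimura type or the period map $\Phi_S: S \lo \Hod^0(S, \VV)$ is not dominant, and aim for a contradiction with the hypothesis that $\SL(S, \VV)$ is Zariski-dense in $S$.

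The key step is to establish the inclusion $\SL(S, \VV) \subset S_{\atyp}(\VV)$ under this contrapositive hypothesis. Pick any $Y \in \SL(S, \VV)$: by definition $Y$ is a special subvariety for $(S, \VV)$ of Shimura type with dominant period map $\Phi_Y: Y \lo \Sh^0(Y)$. \Cref{atypical of Shimura type} asserts that such a $Y$ fails to be atypical for $(S, \VV)$ if and only if $(\PP_S, X_S)$ is of Shimura type, $\Phi_S$ is dominant, and $\VV$ is the restriction of a standard VMHS on $\Sh^0(S, \VV)$. Under the contrapositive hypothesis, one of the first two of these conditions fails, so $Y$ must be atypical for $(S, \VV)$. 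Hence every element of $\SL(S, \VV)$ lies in $S_{\atyp}(\VV)$, and consequently $S_{\atyp}(\VV)$ contains a Zariski-dense subset of $S$. This directly contradicts \Cref{main conjecture3}, and the proof is complete.

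There is really no obstacle in this reduction: the whole substance of the argument is carried by the (highly non-trivial) \Cref{main conjecture1} and by \Cref{atypical of Shimura type}, both produced by the earlier sections. Note in passing that \Cref{atypical of Shimura type} actually gives slightly more than what is recorded in \Cref{conj1}: under its hypotheses the variation $\VV$ is itself the restriction of a standard VMHS on the ambient Shimura variety, and this could be appended as a strengthening of the conclusion.
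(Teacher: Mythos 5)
Your argument is correct and is essentially the paper's argument, just phrased contrapositively instead of directly: the paper invokes \Cref{main conjecture3} to conclude that $S_\atyp(\VV)$ is not Zariski-dense, so some member of $\SL(S,\VV)$ must be non-atypical, and then applies \Cref{atypical of Shimura type}; you assume the conclusion fails, deduce via \Cref{atypical of Shimura type} that every member of $\SL(S,\VV)$ is atypical, and derive the same contradiction with \Cref{main conjecture3}. The two are logically equivalent (your version even avoids the paper's unnecessary appeal to Noetherian induction, since the right-hand side of the equivalence in \Cref{atypical of Shimura type} depends only on $(S,\VV)$ and not on $Y$, so the members of $\SL(S,\VV)$ are either all atypical or all non-atypical).
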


Let us show how to deduce \Cref{conj1} from our main \Cref{main
  conjecture1}.  Let $\Phi_S: S \lo \Hod^0(S, \VV)$ be the period map for
$\VV$. By assumption the special subvarieties of Shimura type and
dominant period map are Zariski-dense in $S$. It
thus follows from \Cref{main conjecture3} and Noetherian induction that there exists 
(a Zariski-dense set of) not atypical special subvarieties of Shimura
type with dominant period map
in $S$. The conclusion
follows from the \Cref{atypical of Shimura type}.

Notice that \Cref{conj1} immediately implies, and, by density of
CM-points in Shimura varieties, is in fact
equivalent to, the
following:

\begin{conj}[Andr\'e-Oort conjecture for $\ZZ$VHS, version 2] \label{conj2}

Let $\VV \lo S$ be a $\ZZ$VMHS over a
smooth irreducible quasi-projective variety $S$.
Suppose that the union of CM-points for $(S, \VV)$ is Zariski-dense in
$S$.  Then $(\PP_S, X_S)$ is of Shimura type and the period map
$\Phi_S: S \lo \Sh^0(S, \VV)$ is dominant.
\end{conj}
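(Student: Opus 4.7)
The plan is to deduce \Cref{conj2} directly from \Cref{conj1}, which has already been established as a consequence of \Cref{main conjecture1}. The key observation is that every CM-point of $(S, \VV)$ is in fact itself a special subvariety of Shimura type whose associated period map is tautologically dominant; granting this, the hypothesis that the CM-points of $(S, \VV)$ are Zariski-dense in $S$ immediately implies that the set $\SL(S, \VV)$ appearing in \Cref{conj1} is Zariski-dense in $S$, and the conclusion follows.

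First I would verify that any CM-point $x \in S$, regarded as the $0$-dimensional irreducible subvariety $\{x\} \subset S$, is special of Shimura type with dominant period map. By definition, the Mumford-Tate group $T := \MT_x(\VV)$ is a $\QQ$-torus. Since $T$ is commutative, the adjoint representation $T \lo \GL(\mathfrak{t})$ is trivial, so the mixed Hodge structure on $\mathfrak{t}$ induced by $\Ad \circ \varphi_x$ is the trivial one, concentrated in bidegree $(0,0)$. In particular the Hodge type of $\mathfrak{t}$ is contained in $\{(-1,1),(0,0),(1,-1)\}$, so the connected Hodge datum $(T, \{\varphi_x\})$ satisfies the condition of \Cref{1.10} and is of Shimura type. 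Moreover the associated mixed Mumford-Tate domain $\cD_T$ is reduced to a point, so $\Sh^0(\{x\}, \VV_{|\{x\}})$ has dimension zero and the period map from $\{x\}$ to it is trivially dominant. Thus $\{x\} \subset \SL(S, \VV)$.

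The remaining step is then formal: assuming the union of CM-points is Zariski-dense in $S$, the preceding step shows that this union is contained in $\SL(S, \VV)$, so $\SL(S, \VV)$ itself is Zariski-dense in $S$; applying \Cref{conj1} one concludes that the Hodge datum $(\PP_S, X_S)$ is of Shimura type and that $\Phi_S: S \lo \Sh^0(S, \VV)$ is dominant, which is exactly \Cref{conj2}. I foresee no genuine obstacle in this direction---it is essentially an unwinding of definitions, the only content being the verification that a torus Mumford--Tate group automatically satisfies the Shimura-type condition of \Cref{1.10}. The more delicate direction, which is not needed for the stated implication but justifies the author's claim of equivalence, is the converse \Cref{conj2} $\Rightarrow$ \Cref{conj1}: it rests on the classical density of CM-points inside any mixed Shimura variety, so that for each special subvariety $X \subset S$ of Shimura type with dominant period map $\Phi_X$, the Zariski-dense CM-points of $\Sh^0(X, \VV_{|X^\sm})$ pull back to a Zariski-dense set of CM-points of $X$, allowing Zariski-density of $\SL(S, \VV)$ to propagate to Zariski-density of CM-points in $S$.
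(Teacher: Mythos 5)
Your proposal is correct and follows exactly the route the paper takes: the paper simply observes that \Cref{conj1} ``immediately implies'' \Cref{conj2}, the content of which is precisely your verification that every CM-point is a zero-dimensional special subvariety of Shimura type with (tautologically) dominant period map, hence belongs to $\SL(S,\VV)$. Your remark on the converse direction via density of CM-points in mixed Shimura varieties likewise matches the paper's parenthetical justification of the claimed equivalence.
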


\begin{rem}
When $S$ is moreover defined over $\ol{\QQ}$, \Cref{conj2}
was originally stated (in a slightly different way) by
Green-Griffiths-Kerr: see \cite[Conj. VIII.B.1
p.275]{GGK}.
\end{rem}

Notice that \Cref{conj1} is equivalent to the union of the following two
conjectures:

\begin{conj}[classical Andr\'e-Oort]  \label{AO}
Let $Y \subset \Sh_K^0(\PP, X)$ be a closed irreducible algebraic
subvariety of a connected mixed Shimura variety. 
If $Y$ contains a Zariski-dense set of CM-points then $Y$ is a special
subvariety of $\Sh_K(\PP, X)$.
\end{conj}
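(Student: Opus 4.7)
The plan is to apply the Pila--Zannier strategy, which has become the standard template for attacking André--Oort type statements. The four ingredients are: (i) a definability statement for the uniformization of the mixed Shimura variety in an o-minimal structure; (ii) Galois-theoretic lower bounds for the orbits of CM points; (iii) the Pila--Wilkie counting theorem for definable sets; and (iv) a functional transcendence (Ax--Lindemann--Weierstrass) statement for the uniformization $\pi: \cD^+ \lo \Gamma \backslash \cD^+$. Together these force a contradiction unless $Y$ contains a positive-dimensional weakly special subvariety, and iteration/saturation then forces $Y$ itself to be special.

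More concretely, first I would fix a semi-algebraic fundamental domain $\FF \subset \cD^+$ for the $\Gamma$-action and show that $\pi_{|\FF}$ is definable in $\RR_{\an, \exp}$; this is due to Peterzil--Starchenko in the pure case and extends to mixed Shimura varieties using the structure of the unipotent fibration. Next, for each CM point $s \in Y$, one produces a Galois orbit in $Y(\ol\QQ)$ of size at least a positive power of the ``complexity'' of the associated CM torus (discriminant of the reflex field, conductor, etc.). Pulling these Galois conjugates back into $\FF$ and combining with Pila--Wilkie applied to the definable set $\pi^{-1}(Y) \cap \FF$, one concludes that for $Y$ not special, $\pi^{-1}(Y)\cap \FF$ must contain a positive-dimensional real semi-algebraic subset whose complex Zariski closure is itself algebraic. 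The mixed Ax--Lindemann theorem (for $\Sh_K^0(\PP, X)$) then identifies this algebraic subset with the image of a Hodge sub-datum, producing a proper weakly special subvariety $Z \subsetneq Y$ still containing many CM points. Finally, a standard Galois/monodromy argument upgrades $Z$ to a genuine special subvariety and one runs the argument inductively (or applies it to each irreducible component of the Zariski-closure of the weakly special loci) until the ambient variety itself is shown to be special.

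The hardest ingredient, and the one most likely to remain the bottleneck outside of special cases, is the Galois lower bound in (ii). For $\cA_g$ this goes through Tsimerman's use of the averaged Colmez conjecture (after Andreatta--Goren--Howard--Madapusi Pera and Yuan--Zhang), and for general pure Shimura varieties of abelian type it now follows from Pila--Shankar--Tsimerman with appendices by Esnault--Groechenig. For \emph{mixed} Shimura varieties of non-abelian type, however, the required lower bounds on Faltings heights/class numbers are not yet available, and this is where I would expect the main obstacle to lie. The other inputs --- definability, Pila--Wilkie, and Ax--Lindemann for mixed Shimura varieties (Gao) --- are in comparatively better shape, so a conditional proof of \Cref{AO} modulo the height lower bounds is already on solid ground.
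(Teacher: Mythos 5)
The statement you have been asked to prove is a \emph{conjecture} in the paper, not a theorem: \Cref{AO} is stated in a \texttt{conj} environment and no proof is offered or claimed. The paper introduces the classical Andr\'e--Oort conjecture only as one of two ingredients whose union is equivalent to \Cref{conj1}, and then remarks that it is known for mixed Shimura varieties of abelian type, referring to the survey \cite{Survey} for the state of the art. There is therefore no ``paper's own proof'' to compare your proposal against.

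Your sketch is the standard Pila--Zannier template (definability of the uniformization in an o-minimal structure, Galois lower bounds for CM orbits, Pila--Wilkie point-counting, Ax--Lindemann, then an induction/monodromy saturation step) and, as a description of the strategy pursued in the literature, it is accurate. You have also correctly put your finger on the genuine gap: the Galois lower bounds for CM points are not available for general mixed Shimura data beyond the abelian-type case, and that is precisely why the statement remains a conjecture in the paper rather than a theorem. So this is not an error on your part, but it does mean your proposal is a programme, not a proof, and it proves the statement only under the same hypotheses (abelian type, or conditional on the height/Galois bounds) under which it was already known. If you want to present this usefully, you should frame it explicitly as a conditional argument and spell out exactly which cases of the CM lower bound are needed, rather than presenting it as a proof of \Cref{AO} in general.
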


\begin{conj} \label{conj1'}
Let $\VV \lo S$ be a $\ZZ$VMHS over a
smooth irreducible quasi-projective variety $S$.
Suppose that the set of CM-points for $(S, \VV)$ is Zariski-dense in
$S$. 
Then $(\PP_S, X_S)$ is of Shimura type.
\end{conj}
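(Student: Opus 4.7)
\emph{Plan.} The aim is to derive \Cref{conj1'} from the Main Conjecture (in any of its equivalent forms \Cref{main conjecture1}--\Cref{main conjecture4}). The key ingredient is the CM-point criterion \Cref{atypical CM point}, which gives a precise Hodge-theoretic characterization of when a CM-point fails to be atypical.

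First I would invoke \Cref{main conjecture3}: under the Main Conjecture, the atypical locus $S_{\atyp}(\VV)$ is not Zariski-dense in $S$. Combined with the hypothesis that the CM-points of $(S, \VV)$ are Zariski-dense in $S$, this immediately forces the existence of at least one CM-point $x \in S$ which does \emph{not} belong to $S_{\atyp}(\VV)$, i.e.\ which is non-atypical for $(S, \VV)$.

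Next I would apply \Cref{atypical CM point} to this non-atypical CM-point $x$. That lemma asserts that a CM-point $x$ is non-atypical for $(S, \VV)$ if and only if $(\PP_S, X_S)$ is of Shimura type and moreover the period map $\Phi_S: S \lo \Sh^0(S, \VV)$ is dominant. In particular the mere existence of such an $x$ forces $(\PP_S, X_S)$ to be of Shimura type, which is precisely the conclusion of \Cref{conj1'}.

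\emph{Remark on the difficulty.} The deduction itself is essentially formal, and indeed yields more than \Cref{conj1'} in isolation: combined with the dominance clause of \Cref{atypical CM point}, it recovers the full \Cref{conj1} directly, bypassing any independent appeal to \Cref{AO}. The genuine mathematical content is packaged inside \Cref{atypical CM point} (itself a corollary of \Cref{atypical of Shimura type}), whose proof relies on a Lie-theoretic rigidity argument: once the horizontal distribution on a mixed Mumford-Tate domain admits an algebraic leaf saturating the horizontal directions, the underlying Hodge datum must be of Shimura type (via a dévissage from the pure case and Griffiths' characterization of Hermitian symmetric period domains). No further obstacle arises; the reduction from the Main Conjecture to \Cref{conj1'} is a single-step non-emptiness argument combining Zariski-density of CM-points with non-Zariski-density of $S_{\atyp}(\VV)$.
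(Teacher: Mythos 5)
Your argument is correct, and in essence it is the same one the paper gives (for \Cref{conj1}), specialized to CM-points: you use \Cref{main conjecture3} to produce a special object outside $S_{\atyp}(\VV)$, then feed it into \Cref{atypical CM point} (the CM case of \Cref{atypical of Shimura type}) to conclude that $(\PP_S, X_S)$ is of Shimura type. One remark worth making: the paper never writes out an explicit derivation of \Cref{conj1'} from the Main Conjecture; instead it derives \Cref{conj1} (using \Cref{main conjecture3} plus \Cref{atypical of Shimura type}) and then records that \Cref{conj1} is equivalent to the conjunction of \Cref{AO} and \Cref{conj1'}, the latter being labelled ``completely open''. Your version is therefore a clean direct route that bypasses \Cref{AO} entirely and, exactly as you observe, in fact delivers the stronger \Cref{conj2} (Shimura type \emph{and} dominant period map), which the paper states is equivalent to \Cref{conj1}. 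The only thing to flag is that the one-step non-emptiness argument works because an atypical \emph{point} is by definition contained in $S_{\atyp}(\VV)$, so a CM-point lying off the Zariski closure of $S_{\atyp}(\VV)$ is automatically non-atypical as a zero-dimensional subvariety; you use this implicitly, and it is correct, but it deserves a sentence since $S_{\atyp}(\VV)$ also contains non-atypical points sitting inside higher-dimensional atypical subvarieties.
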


Many works have been devoted to the Andr\'e-Oort \Cref{conj1}, culminating to its proof when $\Sh_K^0(\PP, X)$ is of Abelian
type. We refer to \cite{Survey} for a detailed analysis of the
Andr\'e-Oort conjecture and references to the related works. On the other hand \Cref{conj1'} is completely open. 

The proof of the classical Andr\'e-Oort Conjecture~\ref{AO} relies
on two different ingredients: on the one hand a precise
analysis of the arithmetic of Galois orbits of CM-points (lower bound and heights),
on the other hand a geometric analysis of the distribution in
$\Sh_K(\PP, \cD)$ of {\em positive dimensional special
subvarieties}. If $(S, \VV)$ is not of Shimura type, the arithmetic of
CM-points for $(S, \VV)$ seems difficult to understand. On the
other hand it seems worth focusing on positive dimensional special
subvarieties, namely the following
geometric part of \Cref{conj1}:

\begin{conj} \label{conj3}
Let $\VV \lo S$ be a $\ZZ$VMHS over a
smooth irreducible quasi-projective variety $S$.
Suppose that the union $\SL^{>0}(S, \VV)$ of {\em positive
  dimensional} special subvarieties for $(S, \VV)$, which
are of Shimura type with dominant period map, is Zariski-dense in
$S$. Then $(\PP_S, X_S)$ is of Shimura type and the period map
$\Phi_S: S \lo \Sh^0(S, \VV)$ is dominant. 
\end{conj}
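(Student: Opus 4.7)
The plan is to derive \Cref{conj3} from \Cref{main conjecture1}, in exactly the same manner in which the paper has just deduced \Cref{conj1}; the whole content of \Cref{conj3} is already encoded, once the main conjecture is granted, in \Cref{atypical of Shimura type}.

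First I would invoke \Cref{equivalence}, which says that \Cref{main conjecture1} is equivalent to \Cref{main conjecture3}: under this hypothesis $S_{\atyp}(\VV)$ is not Zariski-dense in $S$, so its Zariski-closure $Z$ is a proper algebraic subset of $S$. Because $\SL^{>0}(S, \VV)$ is assumed Zariski-dense in $S$ while $Z \subsetneq S$, there must exist at least one irreducible positive-dimensional special subvariety $Y \in \SL^{>0}(S, \VV)$ with $Y \not\subset Z$; such a $Y$ is in particular not atypical for $(S, \VV)$. (If one wishes to obtain a Zariski-dense collection of such $Y$'s, the Noetherian induction argument used in the text for \Cref{conj1} goes through verbatim, but a single $Y$ already suffices for what follows.)

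Next I would apply \Cref{atypical of Shimura type} to this $Y$. By construction $Y$ is a special subvariety for $(S, \VV)$ of Shimura type with dominant period map, and is not atypical in $(S, \VV)$; the lemma then yields that $(\PP_S, X_S)$ is itself of Shimura type, that $\Phi_S : S \lo \Sh^0(S, \VV)$ is dominant, and even that $\VV$ is the restriction to $S$ of a standard variation of mixed Hodge structure on $\Sh^0(S, \VV)$. The first two of these conclusions are exactly the assertion of \Cref{conj3}, and the third is a bonus.

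The hard part is of course that this entire strategy is only a reduction: it does not resolve \Cref{conj3}, it only shows that \Cref{main conjecture1} implies it. Unconditionally, \Cref{conj3} goes substantially beyond the classical Andr\'e--Oort setting (where $(\PP_S, X_S)$ is assumed to be of Shimura type from the start), and the ambient pair $(S, \VV)$ carries a priori none of the arithmetic structure that underlies the known proofs of classical Andr\'e--Oort -- Galois-orbit lower bounds, height inequalities, Pila--Wilkie counting. The only input one has is the purely geometric density of $\SL^{>0}(S, \VV)$, and turning such geometric density into the structural statement that $(\PP_S, X_S)$ itself be of Shimura type seems to genuinely require the full geometric force of \Cref{main conjecture1}. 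I would therefore expect unconditional progress on \Cref{conj3} to proceed by first establishing a suitable case of \Cref{main conjecture1}, most plausibly via the functional-transcendence / Ax--Schanuel route alluded to in \Cref{functional transcendence}.
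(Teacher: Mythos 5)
Your argument is correct and exactly mirrors the paper's own deduction of \Cref{conj1} from \Cref{main conjecture1} (via the equivalence with \Cref{main conjecture3} and then \Cref{atypical of Shimura type}); the paper states \Cref{conj3} without proof, introducing it only as the ``geometric part'' of \Cref{conj1}, so what you have reproduced is precisely that implicit reduction, and your observation that a single non-atypical $Y$ already suffices is a harmless streamlining of the paper's phrasing. You are also right to stress that this is only a conditional implication from the main conjecture and that \Cref{conj3} itself remains genuinely open.
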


One could even ask the following question, more general than \Cref{conj1}:
\begin{Question} \label{Q3}
Let $\VV \lo S$ be a $\ZZ$VMHS over a
smooth irreducible quasi-projective variety $S$.
Suppose that the union of special subvarieties for $(S, \VV)$ which
are of Shimura type (but not necessarily with dominant period maps) is Zariski-dense in
$S$. Is it true that necessarily $(\PP_S, X_S)$ is of Shimura type?
\end{Question}

%%%%%%%%%%%%%%%%%%%%%%%%%%%%%%%%%%
\section{Example: Calabi-Yau 3-folds} \label{CY}

Let us describe the first non-trivial incarnation of \Cref{conj1}
outside of the world of Shimura varieties. Let $X$ be a smooth
projective Calabi-Yau threefold (i.e. the canonical bundle $K_X$ is
trivial and $X$ has trivial fundamental group). Let $H_\ZZ:= H^3(X,
\ZZ)$ with its natural polarized weight $3$ $\ZZ$-Hodge structure $H_\CC = H^{3,0}
\oplus H^{2,1} \oplus H^{1,2} \oplus H^{0,3}$. One can endow $H_\ZZ$
with two different weight one Hodge structures: 

- the Weil Hodge structure $H^1_{W}$ for which $H^{1,0}_{W} := H^{0,3} \oplus
H^{2,1}$. 

- the Griffiths Hodge structure $H^1_{G}$ for which $H^{1,0}_{G} :=
H^{3,0} \oplus H^{2,1}$.

These two weight one Hodge structures define two complex structures on
the torus $H_\RR/H_\ZZ$, the Weil intermediate Jacobian $J(X)_W \coloneqq H_\ZZ
\backslash H_\CC / H^1_{W}$ which is an Abelian variety but does not
vary holomorphically with $X$, and the Griffiths intermediate Jacobian
$J(X)_G\coloneqq H_\ZZ \backslash H_\CC / H^1_{G} $ which is a mere complex
torus but varies holomorphically with $X$. In \cite{Borcea} Borcea
proves that $H_\ZZ$ has CM if and only if both the Hodge structures 
$H^1_{W} $ and  $H^1_ G$ have CM and their Mumford-Tate tori mutually commute
in $\GL(H_\QQ)$. Let $S = \Def(X)$ be the family of Calabi-Yau
threefolds deformation space of $X$ and $\VV$ be the corresponding polarized weight
$3$ variation of $\ZZ$-Hodge structure on $S^\sm$ with fiber $H_\ZZ$
at $X$. One can choose $X$ so that $S$
contains infinitely many CM-points. To the best of my knowledge, in
all examples (see \cite{VZ}, \cite{Z}, \cite{Rohde}) the irreducible subvarieties of $S$ containing a
Zariski-dense set of CM-points and maximal for these properties are of
Shimura type, as predicted by \Cref{conj1}. On the other hand 
it is not clear to me that there are only finitely many such
subvarieties as predicted by \Cref{main conjecture1}. Notice that a
weaker version of \Cref{conj1} in this case (and more generally for
Calabi-Yau $n$-folds) already appears in \cite{GuVa}.

%%%%%%%%%%%%%%%%%%%%%%%%%%%%%%%%%%%%%%%%%%%%%%%%%%%%%%%
\section{Functional transcendence} \label{functional transcendence}

One main tool for attacking \Cref{main conjecture1} or
\Cref{conj1} consists in establishing functional transcendence
statements for the period map $\Phi_S$. It is adapted from the Pila-Zannier strategy for proving the
Andr\'e-Oort \Cref{AO}, hopefully using o-minimal techniques. We refer
once more to \cite{Survey} for a description of this strategy in the case of the
Andr\'e-Oort conjecture and focus here on the expected statements.

\subsection{Weakly special subvarieties}
These functional transcendence statements detect {\em weakly special subvarieties}, a generalisation
of special subvarieties.

\begin{defi}
Consider any Hodge morphisms $R \stackrel{\pi}{\leftarrow} T \stackrel{i}{\rightarrow} Y$ between
connected mixed Hodge varieties and any point $r \in R$. Then any
irreducible component of $i(\pi^{-1}(r))$ is called a weakly special
subvariety of $Y$.

Let $\VV \lo S$ be a $\ZZ$VMHS over a
smooth quasi-projective base $S$ with associated period map $\Phi_S:S
\lo \Hod^0(S, \VV)$. Any irreducible component of
$\Phi_S^{-1}(\Phi_S(S) \cap Z)$, where $Z$ is a weakly special
subvariety of the connected mixed Hodge variety $\Hod^0(S, \VV)$ is
called a weekly special subvariety for $(S, \VV)$.
\end{defi}

In particular (taking $R$ a point) special subvarieties of $(S, \VV)$ are weakly special
for $(S, \VV)$.

\subsection{Bi-algebraic geometry}
The format for the functional transcendence statements we are
interested in is the notion of bi-algebraic structure:

\begin{defi} \label{bialg}
A bi-algebraic structure on a connected algebraic variety $S$
is a pair 
$$ (D: \ti{S} \lo X, \quad h: \pi_1(S) \lo \Aut(X))\;\;$$ 
where $\pi:\ti{S} \lo S$ denotes the universal cover of $S$, $X$ is an
algebraic variety, $\Aut(X)$ its group of algebraic
automorphisms, $h: \pi_1(S) \lo \Aut(X)$ is a group morphism
(called the holonomy representation) and 
$D$ is an $h$-equivariant holomorphic map (called the developing map).
\end{defi}

\begin{defi} \label{bialgebraic}
Let $S$ be a connected algebraic variety endowed with a
bi-algebraic structure $(D, h)$.
\begin{itemize}
\item[(i)]
An irreducible analytic subvariety $Y \subset \ti{S}$ is said to be an
irreducible algebraic subvariety of $\ti{S}$ if $Y$ is an analytic
irreducible component of $D^{-1}(\overline{D(Y)}^\Zar)$ (where
$\overline{D(Y)}^\Zar$ denotes the Zariski-closure of $D(Y)$ in $X$).
\item[(ii)]
An irreducible algebraic subvariety $Y\subset \ti{S}$, resp. $W
\subset S$, is said to be {\em bi-algebraic} if $\pi(Y)$ is an
algebraic subvariety of $S$, resp. any (equivalently one) analytic irreducible component of $\pi^{-1}(W)$ is an
irreducible algebraic subvariety of $\ti{S}$. 
\end{itemize}
\end{defi}

If $(S, \VV)$ is a $\ZZ$VMHS with lifted period map $\tilde{\Phi}_S: \ti{S} \lo
\cD_S^+$, we denote by $$\hat{\Phi}_S : \ti{S} \lo
\hat{\cD}_S$$ the composite $j_S \circ \tilde{\Phi}_S$ where $j_S: \cD_S
\hookrightarrow \hat{\cD}_S$ denotes the open embedding of the
Mumford-Tate domain $\cD_S$ in its compact dual (see \Cref{mixed
  Hodge data}). The pair $(\hat{\Phi}_S: \ti{S} \lo
\hat{\cD}_S, \rho_S: \pi_1(S) \lo \PP_S(\CC) \lo
\Aut(\hat{\cD}_S)(\CC))$ defines a natural bi-algebraic structure
on $S$, called the bi-algebraic structure of $(S, \VV)$. The relation
between the bi-algebraic structure of $(S, \VV)$ and Hodge theory in
the pure case is
given by the following proposition, whose proof will be provided in a
sequel to this note. It is proven by Ullmo-Yafaev \cite{UY1}
in the case where $S$ is a Shimura variety, and in some special case
by Friedman and Laza \cite{FL15}:

\begin{prop} \label{bi-alg}
Let $(S, \VV)$ be a $\ZZ$VHS. The bi-algebraic subvarieties of $(S, V)$ are
the weakly special subvarieties of $(S, \VV)$.
\end{prop}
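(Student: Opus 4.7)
The plan is to establish the two implications of \Cref{bi-alg} separately. The easy direction, from \emph{weakly special} to \emph{bi-algebraic}, is essentially formal. If $W \subset S$ is an irreducible component of $\Phi_S^{-1}(\Phi_S(S) \cap Z)$ for some weakly special $Z \subset \Hod^0(S, \VV)$ arising from a diagram of Hodge morphisms $R \stackrel{\pi}{\leftarrow} T \stackrel{i}{\rightarrow} \Hod^0(S, \VV)$, lifting to universal covers shows that the relevant subvariety of $\cD_S^+$ is of the form $g \cdot \cD_{\HH}^+$, namely a fiber of the induced map $\cD_{\HH}^+ \to \cD_{\HH'}^+$ between connected Mumford-Tate domains. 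Its closure $g \cdot \hat{\cD}_{\HH}$ in the compact dual $\hat{\cD}_S$ is a closed algebraic subvariety, since $\hat{\cD}_{\HH} \hookrightarrow \hat{\cD}_S$ is an inclusion of rational homogeneous projective varieties cut out by parabolic conditions. Any lift $Y$ of $W$ to $\tilde{S}$ is then an irreducible component of $\hat{\Phi}_S^{-1}(g \cdot \hat{\cD}_{\HH})$, proving bi-algebraicity.

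For the converse, let $W \subset S$ be bi-algebraic with lift $Y \subset \tilde{S}$, and choose $\tilde{w} \in Y$ above a Hodge-generic point $w \in W^\sm$. Denote by $\PP_W \subset \PP_S$ the generic Mumford-Tate group of $\VV|_{W^\sm}$ and by $\HH_W$ the connected algebraic monodromy group. The central Hodge-theoretic input is the Deligne--Andr\'e theorem: $\HH_W$ is normal in $\PP_W^\der$, and in fact in $\PP_W$ itself because $Z(\PP_W)$ centralises $\HH_W$. This produces a morphism of connected Hodge data $(\PP_W, X_{\PP_W}, \cD_{\PP_W}^+) \to (\PP_W/\HH_W, X_{\PP_W/\HH_W}, \cD_{\PP_W/\HH_W}^+)$. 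Composing the lifted period map of $W^\sm$ with this quotient gives a period map into $\cD_{\PP_W/\HH_W}^+$ whose monodromy factors through $\PP_W/\HH_W$, hence is trivial; by the theorem of the fixed part this composition is constant. Consequently $\tilde{\Phi}_S(Y)$ is contained in a single fiber of the quotient, i.e.\ in an $\HH_W(\RR)^+ \W_W(\CC)$-orbit inside $\cD_{\PP_W}^+ \subset \cD_S^+$. The closure $\hat{Z}_0$ of this orbit in $\hat{\cD}_S$ is a single $\HH_W(\CC)$-orbit, algebraic by Chevalley's theorem on orbits of algebraic groups acting on projective varieties, and it descends to a weakly special subvariety $Z_0 \subset \Hod^0(S, \VV)$.

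The last step is to upgrade the containment $\tilde{\Phi}_S(Y) \subset \hat{Z}_0$ to the statement that $Y$ is an irreducible component of $\hat{\Phi}_S^{-1}(\hat{Z}_0)$, so that $W$ is honestly weakly special and not merely contained in one. This is precisely where the bi-algebraicity hypothesis enters: by definition $Y$ is an irreducible component of $\hat{\Phi}_S^{-1}(\hat{Z})$ for $\hat{Z} := \overline{\hat{\Phi}_S(Y)}^{\Zar}$. The invariance of $\hat{\Phi}_S(Y)$ under the action of $\pi_1(W^\sm) \subset \pi_1(S)$ on $\tilde{S}$, together with the Zariski density of the monodromy image in $\HH_W$, forces $\hat{Z}$ to be $\HH_W(\CC)$-stable; since $\hat{Z} \subset \hat{Z}_0$ and $\hat{Z}_0$ is itself a single $\HH_W(\CC)$-orbit containing $\hat{\Phi}_S(\tilde{w}) \in \hat{Z}$, one obtains $\hat{Z} = \hat{Z}_0$, and the desired component identification follows. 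The main obstacle I anticipate is the careful verification that $(\PP_W/\HH_W, X_{\PP_W/\HH_W})$ is a bona fide connected Hodge datum and that its formation is compatible with horizontality; this requires combining Andr\'e's theorem with an explicit check that the Hodge filtration on $\p_W$ induced by $\varphi_w$ descends to a filtration on $\p_W/\Fh_W$ satisfying the conditions of \Cref{propPink}, and that the resulting quotient morphism of Mumford-Tate domains sends horizontal directions to horizontal directions.
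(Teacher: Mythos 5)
The paper does not actually supply a proof of \Cref{bi-alg}: immediately before the statement it says the proof ``will be provided in a sequel to this note,'' citing Ullmo--Yafaev \cite{UY1} for the case of Shimura varieties and Friedman--Laza \cite{FL15} for a special case. There is therefore no paper proof to compare against, and you should be aware that your argument must stand entirely on its own.

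That said, your proposal follows what is very likely the intended route (it is the natural generalisation of the Ullmo--Yafaev argument): the easy ``weakly special $\Rightarrow$ bi-algebraic'' direction via algebraicity of Mumford--Tate domain inclusions in the compact dual, and the hard direction via Andr\'e's monodromy theorem together with a rigidity/fixed-part argument to produce, from a bi-algebraic $W$, the quotient Hodge datum $(\PP_W/\HH_W,\dots)$ in which the period map becomes constant. A few points deserve tightening. First, the assertion that $\hat Z_0$ is itself a single $\HH_W(\CC)$-orbit is false in general: Chevalley gives local closedness, and the Zariski closure of an orbit typically picks up lower-dimensional strata; indeed $\HH_W(\CC) \cap Q_{\PP_S}$ need not be parabolic in $\HH_W$, so the orbit need not be projective. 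Fortunately your later inclusion argument ($\hat Z \supset \hat Z_0$ because $\hat Z$ is $\HH_W(\CC)$-stable and contains the base point, and $\hat Z \subset \hat Z_0$ because the image lies in a fibre of the quotient) never actually uses that $\hat Z_0$ is a single orbit, so this can be dropped without damage. Second, ``by the theorem of the fixed part this composition is constant'' is a little loose. Deligne's theorem of the fixed part identifies the flat (weight-zero Hodge type) sections as a sub-VHS; what you actually need is that a graded-polarizable $\ZZ$VHS with trivial monodromy over a connected smooth quasi-projective base has \emph{constant} Hodge filtration. This is Schmid's rigidity theorem (ultimately a consequence of the curvature properties of period domains, or, in the algebraic setting, of the nilpotent orbit theorem); the theorem of the fixed part is closely related but is not literally the statement you invoke. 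Third, the verification you flag at the end --- that $\PP_W/\HH_W$, together with the image of $X_{\PP_W}$, really is a Hodge datum in the sense of \Cref{propPink}, and that the resulting quotient map on Mumford--Tate domains is defined and compatible with horizontality --- is indeed where the genuine technical work sits, and in the pure case it is a manageable check (reductivity of $\PP_W$, normality and semisimplicity of $\HH_W$, and the fact that the weight cocharacter is central hence survives the quotient) that should be written out rather than deferred. With these repairs the argument is in good shape.
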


A similar result should hold in the mixed case (see
\cite[Cor.8.3]{Gao} for the case of $\ZZ$VMHS of Shimura type).

\subsection{Ax-Schanuel for variation of mixed $\ZZ$-Hodge structures}
The main functional transcendence conjecture in this setting is:
\begin{conj}(Ax-Schanuel for $\ZZ$VMHS) \label{Ax-Schanuel}
Let $(S, \VV)$ be a $\ZZ$VMHS. Let $U \subset \tilde{S} \times S$ be
an algebraic subvariety and let $W$ be an irreducible component of $U
\cap \Delta$ (where $\Delta$ denotes the graph of $\pi: \tilde{S} \lo
S$).
Then $\cd_U W \geq \dim \overline{W}^{ws}$, where
$\overline{W}^{ws}$ denotes the smallest weakly special special
subvariety of $S$ containing $\pi(W)$.
\end{conj}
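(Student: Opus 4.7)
The plan is to adapt to this setting the Pila--Zannier strategy successfully used by Mok--Pila--Tsimerman for Shimura varieties and by Bakker--Tsimerman for pure polarized $\ZZ$VHS. The three ingredients are: (a) the definability, in an o-minimal structure, of the lifted period map restricted to a fundamental domain; (b) the Pila--Wilkie counting theorem; (c) a sufficiently good arithmetic/monodromy input to convert counting into an algebraicity statement. I would first set up the formal reduction, then recall/extend the definability result to the admissible, graded-polarizable mixed setting, and finally run a Pila--Wilkie argument combined with the bi-algebraic characterization of weakly special subvarieties (\Cref{bi-alg}).

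First I would translate the statement into the more convenient picture of the graph of the period map, following the Mok--Pila--Tsimerman formulation. By passing from the universal cover $\tilde S$ to its image in the compact dual, via $\hat\Phi_S: \tilde S \lo \hat\cD_S$, one reduces \Cref{Ax-Schanuel} to the following intersection statement: if $V \subset \hat\cD_S \times S$ is an algebraic subvariety and $W$ is an analytic component of $V \cap \Gr(\hat\Phi_S)$, where $\Gr(\hat\Phi_S)$ denotes the image of $\ti S$ in $\hat\cD_S \times S$, then $\cd_V W \geq \dim \overline{W}^{ws}$. A standard dimension-count and induction on the Hodge-theoretic complexity of $(S, \VV)$ then allow us to assume that $\pi(W)$ is Zariski-dense in $S$, so the smallest weakly special subvariety containing $\pi(W)$ equals $S$ itself; in particular we may reduce to proving the inequality $\cd_V W \geq \dim S$ in that case.

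Next, the core analytic input is the definability in $\RR_{\mathrm{an, exp}}$ of the restriction of $\hat\Phi_S$ to a Siegel-type fundamental domain $F \subset \cD_S^+$ for the monodromy action of $\Gamma$. In the pure case this is the theorem of Bakker--Klingler--Tsimerman; for admissible graded-polarizable $\ZZ$VMHS I would deduce it by combining the mixed $\SL_2$-orbit theorem of Kashiwara--Cattani--Kaplan--Pearlstein with a devissage on the weight filtration, reducing the unipotent radical contribution (a locally trivial $\W_S(\CC)$-bundle over the graded piece) to a polynomial description. Granting this, the set $V \cap \Gr(\hat\Phi_S) \cap (F \times S)$ becomes a definable set in $\RR_{\mathrm{an, exp}}$. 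Assume for contradiction that $\cd_V W < \dim S$. Pulling back along monodromy translates $\gamma \cdot F$ with $\gamma \in \Gamma$ and cutting everything down to an appropriate subset, the hypothesis forces the existence of uncountably many $\gamma$'s producing positive-dimensional intersections, so by Pila--Wilkie applied to the associated definable family we obtain a semi-algebraic block $B$ in $F$ containing $W$ and moving non-trivially under a positive-dimensional real algebraic family of monodromy-like elements in $\PP_S(\CC)$.

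The last step is a Tannakian/bi-algebraic analysis in the spirit of Ullmo--Yafaev: the existence of such a block $B$ produces a positive-dimensional algebraic subgroup $\HH \subset \PP_S$ fixing $\hat\Phi_S(W)$, and normalized by a finite-index subgroup of $\Gamma$; by \Cref{bi-alg} (and its mixed analogue, which I would establish in parallel by extending the Friedman--Laza and Ullmo--Yafaev methods to the mixed Mumford--Tate domain via the splitting $\cD_S \simeq \W_S(\CC) \times \cD_{\Gr^W \VV}$), the $\HH$-orbit of $W$ inside $\tilde S$ descends to a weakly special subvariety of $S$ strictly containing $\pi(W)$. This contradicts our reduction that $\overline{\pi(W)}^{ws} = S$, completing the proof. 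The main obstacle is the second step: establishing the o-minimal definability and the proper bi-algebraic description in the genuinely mixed, admissible, graded-polarizable case — in particular, controlling the unipotent fibers of $\cD_S \to \cD_{\Gr^W \VV}$ and proving that weakly special subvarieties are exactly the bi-algebraic ones for $(S, \VV)$. Once these ingredients are in place, the Pila--Wilkie and monodromy parts follow the now-standard template.
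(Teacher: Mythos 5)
This statement is labeled a \emph{conjecture} in the paper, and the paper offers no proof of it: \Cref{Ax-Schanuel} is presented as an open problem, one of the ``expected statements'' in the functional transcendence programme. So there is no proof of the paper to compare against, and your ``proposal'' is a research programme rather than an argument; you say as much yourself by repeatedly flagging what you ``would'' do and by isolating the definability step and \Cref{bi-alg} in the mixed case as the main obstacles.

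Two substantive remarks on the programme you sketch. First, you correctly identify that the two load-bearing ingredients --- o-minimal definability of the period map for admissible graded-polarizable $\ZZ$VMHS, and the identification of bi-algebraic subvarieties with weakly special ones in the mixed setting --- are both unavailable in the paper. Indeed \Cref{bi-alg} is only stated for pure $\ZZ$VHS, with the proof deferred to ``a sequel to this note,'' and the mixed case is only asserted to ``hold'' with a pointer to the Shimura-type case in \cite{Gao}. Claiming to ``establish it in parallel by extending Friedman--Laza and Ullmo--Yafaev'' is not a step, it is a second open problem; likewise for the mixed $\SL_2$-orbit/definability step. Second, even granting those inputs, your Pila--Wilkie step is not set up correctly for an Ax--Schanuel (as opposed to Ax--Lindemann) statement. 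The reduction to the case $\overline{\pi(W)}^{ws}=S$ and then deducing $\cd_U W \geq \dim S$ is not the right formal skeleton: Ax--Schanuel is proved by a double induction on $\dim U$ and $\cd_U W$, with the Pila--Wilkie contribution coming from counting in a definable \emph{family} indexed by varying $U$, together with height/volume bounds comparing the degree of $\gamma$ to the hyperbolic volume of $\gamma F \cap U$ (the Hwang--To-type estimate in Mok--Pila--Tsimerman, or the place-by-place comparison in Bakker--Tsimerman). Simply ``applying Pila--Wilkie to the associated definable family'' after assuming $\cd_U W < \dim S$ does not by itself produce the uncountably many translates you invoke; the growth estimate is precisely what makes the count super-polynomial. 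As written, the argument has a gap exactly where the Ax--Schanuel mechanism differs from the Ax--Lindemann one.

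So: the approach is the natural one and essentially the one the paper envisions, but it is a roadmap, not a proof, and even as a roadmap the Pila--Wilkie/volume step is misdescribed. In the paper's own logic, \Cref{Ax-Lindemann} (not \Cref{Ax-Schanuel}) is the target singled out as ``a crucial step'' toward \Cref{conj3}, and both remain conjectural there.
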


When applied to a subvariety $U \subset \tilde{S} \times S$ of the
form $Y \times \overline{\pi(Y)}^\Zar$ for $Y \subset \tilde{S}$
algebraic, \Cref{Ax-Schanuel} specializes
to the following:

\begin{conj}(Ax-Lindemann for $\ZZ$VMHS) \label{Ax-Lindemann}
Let $(S, \VV)$ be a $\ZZ$VMHS. Let $Y\subset \tilde{S}$ be an
algebraic subvariety. Then $\overline{\pi(Y)}^\Zar$ is a bi-algebraic
subvariety of $(S, \VV)$, i.e. weakly special for $(S, \VV)$.
\end{conj}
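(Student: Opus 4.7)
The plan is to derive the statement as a direct corollary of the preceding \Cref{Ax-Schanuel}. Setting $W := \overline{\pi(Y)}^\Zar$, form the algebraic subvariety $U := Y \times W$ of $\tilde{S} \times S$. Because $\pi(Y) \subset W$, the graph piece $W_0 := \{(y,\pi(y)) : y \in Y\}$ is an irreducible component of $U \cap \Delta$ projecting isomorphically onto $Y$ via the first coordinate; hence $\cd_U W_0 = \dim U - \dim Y = \dim W$. \Cref{Ax-Schanuel} then yields $\dim W \geq \dim \overline{W_0}^{ws}$, where $\overline{W_0}^{ws}$ is the smallest weakly special subvariety of $S$ containing $\pi(Y)$. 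Since $W = \overline{\pi(Y)}^\Zar \subset \overline{W_0}^{ws}$ (weakly special subvarieties being Zariski-closed), the two inequalities force $W = \overline{W_0}^{ws}$, so $W$ is weakly special.

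Since \Cref{Ax-Schanuel} is itself a conjecture, one would wish for a direct proof of \Cref{Ax-Lindemann}. I would follow the Pila--Zannier strategy. First, fix a semi-algebraic fundamental domain $F \subset \tilde{S}$ for $\Gamma$ and establish definability of the restricted period map $\hat{\Phi}_S|_F : F \to \hat{\cD}_S$ in the o-minimal structure $\RR_{\mathrm{an,exp}}$; this is known in the pure case by Bakker--Klingler--Tsimerman and should extend to the mixed case by a devissage on the weight filtration. Let $\tilde{W}$ denote the analytic irreducible component of $\pi^{-1}(W)$ containing $Y$, and consider the real semi-algebraic stabilizer $\Theta := \{g \in \PP_S(\RR)\W_S(\CC) : g \cdot Y \subset \tilde{W}\}$, whose identity component preserves $\tilde{W}$ setwise.

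The core step is to show that $\Theta \cap \Gamma$ is Zariski-dense in a positive-dimensional real algebraic subgroup. One encodes the set $\{\gamma \in \Gamma : \gamma F \cap F \neq \emptyset,\; \gamma Y \subset \tilde{W}\}$ inside a definable subset of $\PP_S(\RR)\W_S(\CC)$ and applies the Pila--Wilkie counting theorem. The necessary supply of integer points comes from the algebraic monodromy of $\VV_{|W^{\sm}}$, which is nontrivial as soon as $\pi(Y)$ is not contained in a proper weakly special subvariety (by the theorem of the fixed part of Andr\'e--Deligne); Pila--Wilkie then forces the definable set to contain a nontrivial semi-algebraic block, yielding the desired subgroup.

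The main obstacle is the concluding step: upgrading this real algebraic stabilizer to a $\QQ$-algebraic subgroup defining a sub-mixed Hodge datum $(H, X_H) \hookrightarrow (\PP_S, X_S)$, so that by \Cref{bi-alg} (and its still-conjectural mixed generalization) $W$ is indeed weakly special. This requires delicate control of the compatibility of $\Gamma$-conjugation with the Hodge and weight filtrations on $\p_S$, and invokes admissibility of $\VV$ at the boundary of $S$. Here the mixed setting genuinely diverges from the pure Shimura-type case handled by Ullmo--Yafaev, with partial progress in mixed Shimura situations due to Gao.
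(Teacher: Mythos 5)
Your first paragraph reproduces exactly the argument the paper indicates: \Cref{Ax-Lindemann} is the specialization of \Cref{Ax-Schanuel} to $U = Y \times \overline{\pi(Y)}^{\Zar}$, and your dimension count $\cd_U W_0 = \dim W$ together with the inclusion $W \subset \overline{W_0}^{ws}$ yields equality, which is precisely what is meant by ``specializes.'' The Pila--Zannier sketch that follows is extra material beyond what the paper attempts (the paper leaves Ax-Lindemann as a conjecture and offers no direct proof strategy), but it is a reasonable outline consistent with the approach that has since succeeded in the pure Shimura and pure VHS settings, with the difficulties you flag (definability and $\QQ$-rationality of the stabilizer in the mixed case) being the genuine open points.
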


Establishing \Cref{Ax-Lindemann} is a crucial step in establishing
\Cref{conj3}. 

%\section{Results}
%In \cite{Klingler} we obtain the following results on the conjectures
%described in this note:

%\begin{theor} \label{main}
%\Cref{conj3} holds true if $S$ is projective and $\VV$ is pure.
%\end{theor}

%This is obtained through the following two theorems:

%\begin{theor} \label{Implication}
%\Cref{Ax-Lindemann} implies \Cref{conj3}.
%\end{theor}

%\begin{theor} \label{ProjectiveAxLin}
%If $S$ is projective and $\VV$ is pure then \Cref{Ax-Lindemann} holds 
%true.
%\end{theor}

%%%%%%%%%%%%%%%%%%%%%%%%%%%%%%%%%%%%%%%%%%%%%%%%%%%%%%%%%%%%%%%%%%%%%%%%%%%%%%%

\sspace
\noindent Bruno Klingler : Universit\'e Paris-Diderot (Institut de
Math\'ematiques de Jussieu-PRG, Paris)

\noindent email : \texttt{bruno.klingler@imj-prg.fr}.

\end{document}